\numberwithin{equation}{section}
\numberwithin{figure}{section}
\newtheorem{theorem}{Theorem}[section]
\newtheorem{proposition}[theorem]{Proposition}
\newtheorem{lemma}[theorem]{Lemma}
\newtheorem{corollary}[theorem]{Corollary}
\newtheorem{claim}[theorem]{Claim}
\theoremstyle{definition}
\newtheorem{definition}[theorem]{Definition}
\newtheorem{remark}[theorem]{Remark}
\newtheorem{assumption}[theorem]{Assumption}
\DeclareMathOperator{\spn}{span}
\newcommand{\FF}{\mathbb{F}}
\newcommand{\QQ}{\mathbb{Q}}
\newcommand{\ZZ}{\mathbb{Z}}
\def\d22{d_{2\to 2}}
\def\tdel22{{\widetilde{\delta}}_{2\to 2}}
\newcommand{\FP}{{\mathbb{F}_{\!P}}}
\newcommand{\FPnk}{{\mathbb{F}_{\!P}^{n+k-1}}}
\newcommand{\FPk}{{\mathbb{F}_{\!P}^{k}}}
\newcommand{\FPkmin}{{\mathbb{F}_{\!P}^{k-1}}}
\newcommand{\Fpn}{{\mathbb{F}_p^n}}
\newcommand{\Zmn}{{\mathbb{Z}_m^n}}
\newcommand{\Zn}{{\mathbb{Z}^n}}
\newcommand{\Fp}{{\mathbb{F}_p}}
\renewcommand{\l}{\ell}
\newcommand{\ones}{\mathds{1}^n}
\newcommand{\indic}{\mathbf{1}}
\renewcommand{\phi}{\varphi}
\newcommand{\thet}{\vartheta}
\newcommand{\HH}{\operatorname{H}}
\renewcommand{\P}{\operatorname{\mathbb{P}}}
\newcommand{\E}{\operatorname{\mathbb{E}}}
\newcommand{\su}{\subseteq}
\title{A lower bound for the $k$-multicolored sum-free problem in $\Zmn$}
\author{L\'{a}szl\'{o} Mikl\'{o}s Lov\'{a}sz\thanks{Department of Mathematics, MIT, Cambridge, MA 02142. Email \texttt{lmlovasz@mit.edu}. Research supported by NSF Postdoctoral Fellowship Award DMS-1705204.} \and Lisa Sauermann\thanks{Department of Mathematics, Stanford University, Stanford, CA 94305. Email: \texttt{lsauerma@stanford.edu}.}}
\begin{document}

\maketitle

\begin{abstract}

\noindent In this paper, we give a lower bound for the maximum size of a $k$-colored sum-free set in $\Zmn$, where $k\geq 3$ and $m\geq 2$ are fixed and $n$ tends to infinity. If $m$ is a prime power, this lower bound matches (up to lower order terms) the previously known upper bound for the maximum size of a $k$-colored sum-free set in $\Zmn$. This generalizes a result of Kleinberg-Sawin-Speyer for the case $k=3$ and as part of our proof we also generalize a result by Pebody that was used in the work of Kleinberg-Sawin-Speyer. Both of these generalizations require several key new ideas.
\end{abstract}

\section{Introduction}

In 2016, Ellenberg and Gijswijt \cite{EG16} made an enormous breakthrough on the ``cap-set problem''. This problem asks about the largest size of a subset of $\mathbb{F}_3^n$ that does not contain a three-term arithmetic progression. Ellenberg and Gijswijt \cite{EG16} proved that any such set has size at most $o(2.756^n)$. Their proof uses a new polynomial method developed by Croot, Lev and Pach \cite{CLP16} for the analogous problem in $\mathbb{Z}_4^n$. The preprint of Ellenberg and Gijswijt appeared just a few weeks after the one of Croot, Lev and Pach, and subsequently a lot more activity evolved around these new ideas (see \cite{BENNETT18,BCCGNSU16,DM18, ELLENBERG17, FL17, FLS17, FS18, GS16, GREEN17, HEGEDUS18, KSS17,LOVETT18, NASLUND18, NASLUND18b, NS17,  PEBODY17, PETROV16, PP18, SAWIN18, Taoblog16}).

Soon after the preprint of Ellenberg and Gijswijt \cite{EG16} appeared, Blasiak, Church, Cohn, Grochow, Naslund, Sawin, Umans \cite{BCCGNSU16} and independently Alon noticed that the argument of Ellenberg and Gijswijt can also be used to obtain an upper bound on the size of $k$-colored sum-free sets in $\Fpn$ with $k=3$ (see the following definition)\footnote{Here, and whenever we write $\Fp$ throughout this introduction, we assume $p$ to be prime. Since all problems we consider only use the additive structure of $\Fpn$, it is not interesting to consider $\Fp$ for prime powers $p$ instead of primes. We will however consider $\ZZ_m$ where $m$ is a prime power, or any integer.}.

\begin{definition} Let $G$ be an abelian group and let $k\geq 3$. A \emph{$k$-colored sum-free set} in $G$ is a collection of $k$-tuples $(x_{1,j}, x_{2,j}, \dots, x_{k,j})_{j=1}^L$ of elements of $G$ such that for all $j_1,\dots, j_k\in \lbrace 1,\dots, L\rbrace$
\[x_{1,j_1}+x_{2,j_2}+\dots +x_{k,j_k}=0 \quad \text{ if and only if }\quad  j_1=j_2=...=j_k.\]
The size of a $k$-colored sum-free set is the number of $k$-tuples it consists of.
\end{definition}

Alon, Shpilka and Umans \cite{ASU13} introduced the notion of $k$-colored sum-free sets in the case $k=3$ and studied its connections to certain approaches for fast matrix multiplication algorithms.

On his blog, Tao \cite{Taoblog16} published a reformulation of the proof of Ellenberg and Gijswijt, in which he introduced what was later called the slice rank of a tensor. Tao's slice rank method immediately gives upper bounds for the size of $k$-colored sum-free sets in $\Fpn$ for any $k\geq 3$. In order to state these upper bounds, set
\[\Gamma_{m,k}=\min_{0<\gamma<1}\frac{1+\gamma+\dots+\gamma^{m-1}}{\gamma^{(m-1)/k}}\]
for integers $m\geq 2$ and $k\geq 3$. Note that $\Gamma_{m,k}<m$ (since at $\gamma=1$ the function has value $m$ and positive derivative). Furthermore, the function tends to infinity when $\gamma\to 0$, hence the minimum value $\Gamma_{m,k}$ is indeed attained for some $0<\gamma_{m,k}<1$ (one can show that there is a unique $0<\gamma_{m,k}<1$ where the minimum is attained, but this is not necessary for our purposes).

Tao's slice rank method \cite{Taoblog16}, together with the arguments from Blasiak et al.\ \cite{BCCGNSU16}, gives the following upper bound for the size of $k$-colored sum-free sets in $\Zmn$ for prime powers $m$. For the reader's convenience, we give a proof of Theorem \ref{thmupperbound} in Section \ref{sect-upperbound} (see also \cite[Theorem 4]{NASLUND18}, which is a very similar theorem).

\begin{theorem} \label{thmupperbound}
For every prime power $m$ and every integer $k\geq 3$, the size of any $k$-colored sum-free set in $\Zmn$ is at most $(\Gamma_{m,k})^n$.
\end{theorem}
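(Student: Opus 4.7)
I will apply Tao's slice rank method \cite{Taoblog16}, in the framework of Blasiak et al.\ \cite{BCCGNSU16}. Given a $k$-colored sum-free set $(x_{1,j},\dots,x_{k,j})_{j=1}^L$ in $\Zmn$, define the order-$k$ tensor $T\colon[L]^k\to\mathbb{C}$ by
\[
T(j_1,\dots,j_k) \;=\; \indic[x_{1,j_1}+\cdots+x_{k,j_k}=0\text{ in }\Zmn].
\]
The $k$-colored sum-free condition forces $T$ to coincide with the identity diagonal tensor ($1$ on the diagonal $j_1=\cdots=j_k$, $0$ elsewhere), so by Tao's slice rank lemma the slice rank of $T$ equals $L$. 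The task is therefore to prove that the slice rank of $T$ is at most $\Gamma_{m,k}^n$.

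To upper-bound the slice rank, I would express $T$ as a sum of rank-one tensors via a polynomial identity. Identifying $\ZZ_m$ with $\set{0,1,\dots,m-1}\su\mathbb{C}$, I would choose a polynomial representation of $\indic[\sum_i x_i\equiv 0\bmod m]$ in the variables $(x_1,\dots,x_k)$ and take the product over the $n$ coordinates. Expanding gives
\[
T(j_1,\dots,j_k) \;=\; \sum_{(e_1,\dots,e_k)\in E} \alpha_{(e_i)} \prod_{i=1}^k M_{e_i}(x_{i,j_i}),
\]
where each $e_i\in\set{0,\dots,m-1}^n$, $M_{e_i}(x)=\prod_l x_l^{e_{i,l}}$, and $E$ is the support of the expansion. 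Each summand is a rank-one tensor in $(j_1,\dots,j_k)$. Grouping the summands by the index $i^*\in\set{1,\dots,k}$ minimizing $|e_{i^*}|:=\sum_l e_{i^*,l}$ (breaking ties arbitrarily), the slice rank of $T$ is bounded by the sum over $i^*$ of the numbers of distinct $e_{i^*}$ arising in each group.

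The key estimate is that every tuple $(e_1,\dots,e_k)\in E$ satisfies $\min_i |e_i| \le n(m-1)/k$, which follows by averaging from the total-degree bound $\sum_i|e_i|\le n(m-1)$ in the polynomial identity. The number of $e\in\set{0,\dots,m-1}^n$ with $|e|\le D$ is bounded by the standard generating-function Markov estimate $\gamma^{-D}(1+\gamma+\cdots+\gamma^{m-1})^n$ for any $\gamma\in(0,1)$; optimizing over $\gamma$ at $D=n(m-1)/k$ yields exactly $\Gamma_{m,k}^n$, which (up to absorbing the prefactor $k$ into the statement) completes the proof.

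\textbf{Main obstacle.} The delicate step is setting up the polynomial identity so that the total-degree bound $\sum_i|e_i|\le n(m-1)$ holds. For $m$ prime, the Ellenberg--Gijswijt polynomial $\indic[y=0]=1-y^{m-1}$ over $\FF_m$ works cleanly: each factor $1-(x_{1,l}+\cdots+x_{k,l})^{m-1}$ has monomials of total degree $0$ or $m-1$, so $\prod_l\bigl(1-(x_{1,l}+\cdots+x_{k,l})^{m-1}\bigr)$ has total degree at most $n(m-1)$ and the averaging step immediately yields $\min_i|e_i|\le n(m-1)/k$. For prime power $m=p^\ell$ with $\ell\ge 2$, $\ZZ_m$ carries no field structure and the indicator $\indic[y\equiv 0\bmod m]$ admits no such clean two-degree representation; the task becomes choosing the polynomial representation over $\mathbb{C}$ (or over a suitable ring, exploiting the prime power hypothesis) so that the total-degree bound survives after reducing the exponents back into $\set{0,\dots,m-1}$. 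Producing such an identity — and hence carrying the slice rank argument through to give $\Gamma_{m,k}^n$ rather than the trivial bound $m^n$ — is the main technical ingredient, and is where the prime power hypothesis on $m$ enters.
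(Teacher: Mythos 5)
Your proof plan correctly sets up the slice rank framework, and your identification of the key obstacle is spot-on: the whole argument hinges on producing an identity for the indicator $\indic[z_1+\cdots+z_k=0\bmod m]$ with a built-in total-degree bound of $m-1$. But you stop exactly at that point, so the proposal has a genuine gap precisely where you flag it.

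The resolution used in the paper (following Naslund and Blasiak et al.) is to abandon both $\mathbb{C}$ and monomials entirely. Write $m=p^\l$ and work over $\Fp$, replacing the monomial $z^a$ by the binomial-coefficient function $z\mapsto\binom{z}{a}$. Lucas' theorem shows that for $0\le a\le m-1$ this is a well-defined map $\ZZ_m\to\Fp$: if $z\equiv z'\pmod{m}$ then $\binom{z}{a}\equiv\binom{z'}{a}\pmod p$. Then the one-variable identity $\sum_{a=0}^{m-1}(-1)^a\binom{z}{a}=(1-1)^z$ gives the indicator of $z=0$ in $\ZZ_m$ over $\Fp$, and Vandermonde's convolution $\binom{z_1+\cdots+z_k}{a}=\sum_{a_1+\cdots+a_k=a}\binom{z_1}{a_1}\cdots\binom{z_k}{a_k}$ yields
\[
\sum_{\substack{a_1,\dots,a_k\in\{0,\dots,m-1\}\\ a_1+\cdots+a_k\le m-1}}(-1)^{a_1+\cdots+a_k}\binom{z_1}{a_1}\cdots\binom{z_k}{a_k}
=\indic[z_1+\cdots+z_k=0\text{ in }\ZZ_m]
\]
over $\Fp$. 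The constraint $a_1+\cdots+a_k\le m-1$ is forced automatically by this identity; taking the product over the $n$ coordinates and expanding, every rank-one term has ``degrees'' $(a_{s,1},\dots,a_{s,n})$ with $\sum_s\sum_i a_{s,i}\le n(m-1)$, which is exactly what you need for the averaging step. The rest of your argument (pigeonholing on $s$ minimizing $\sum_i a_{s,i}$, the Markov estimate, and the tensor power trick to absorb the factor $k$) then goes through verbatim. So the gap is real but localized: once you know to use $\binom{z}{a}$ over $\Fp$ rather than monomials over $\mathbb{C}$, everything else you wrote is correct.
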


Our main result is the following lower bound for the maximum size of a $k$-colored sum-free set in $\Zmn$, where $k\geq 3$ and $m\geq 2$ are fixed and $n$ tends to infinity. If $m$ is a prime power, this lower bound matches (up to lower order terms) the upper bound in Theorem \ref{thmupperbound}. Thus, we essentially determine  the maximum size of a $k$-colored sum-free set in $\Zmn$, if $m$ is a fixed prime power and $n$ tends to infinity.

\begin{theorem} \label{thmmain}
Let $m\geq 2$ and $k\geq 3$ be fixed integers. Then there exists a $k$-colored sum-free set in $\Zmn$ with size at least $(\Gamma_{m,k})^{n-O(\sqrt{n})}$.
\end{theorem}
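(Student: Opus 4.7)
The plan is to follow the Kleinberg--Sawin--Speyer two-step strategy: first construct a ``block'' $k$-colored sum-free set in $\mathbb{Z}_m^{n_0}$ of size $(\Gamma_{m,k})^{(1-o(1))n_0}$ for a suitable $n_0$, and then tensor-multiply $\lceil n/n_0 \rceil$ independent copies to obtain a set in $\Zmn$. Choosing $n_0 = \Theta(\sqrt{n})$ balances the per-block $o(1)$ loss with the number of blocks and yields the claimed $O(\sqrt n)$ overhead in the exponent.

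The local distribution falls out of the first-order optimality condition for $\Gamma_{m,k}$. Letting $\gamma = \gamma_{m,k}$ denote the minimizer and $\alpha_j = \gamma^j/(1+\gamma+\dots+\gamma^{m-1})$, one directly computes $\sum_{j} j \alpha_j = (m-1)/k$ and $e^{H(\alpha)} = \Gamma_{m,k}$. Hence $\{0,\dots,m-1\}^{n_0}$ contains $(\Gamma_{m,k})^{(1-o(1))n_0}$ vectors of type $\alpha$. A cyclic shift of the alphabet of one color adjusts the per-coordinate sum to lie in the residue class $0 \pmod m$ without changing the entropy, so the naive ``base'' collection of $k$-tuples whose column profile is fixed has the target cardinality, all diagonal sums equal to zero, but in general fails the off-diagonal sum-free condition.

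The heart of the proof is a generalization of Pebody's lemma, which extracts from the base collection a multicolored sum-free subset of size $(\Gamma_{m,k})^{(1-o(1))n_0}$. In the original $k=3$, prime-$m$ case, Pebody produced such a subset via a polynomial identity in $\FF_m[x]$. Here, the analogous object must live in the group ring $\ZZ[\ZZ_m]^{\otimes k}$ (or in an $\FF$-algebra analog when $m$ is a prime power) and realize, coordinatewise, the slice-rank bound underlying Theorem \ref{thmupperbound}. Granting this local lemma, the tensor-product construction over $\lceil n/n_0 \rceil$ blocks then yields the theorem.

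The principal obstacle is the Pebody generalization. Passing from $k=3$ to $k \geq 4$ replaces a bilinear one-variable identity with a multilinear $(k-1)$-variable identity whose symmetric structure is substantially more delicate; in particular, the ad-hoc symmetrization used by Pebody no longer has an obvious counterpart. Passing from prime $m$ to general $m$ removes the field structure on which polynomial division and factorization rely, forcing the entire argument into the group ring over $\ZZ_m$. Simultaneously overcoming these two obstructions--while still producing combinatorially explicit data matching the slice-rank-based upper bound--is where the new ideas flagged in the abstract are required.
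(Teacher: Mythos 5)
Your proposal misreads both the global structure of the argument and the role of Pebody's result, so it would not yield the theorem as stated.

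First, the block-tensorization plan does not deliver the claimed bound; it strictly worsens it. If a block of size $n_0$ yields a $k$-colored sum-free set of size $(\Gamma_{m,k})^{n_0 - c\sqrt{n_0}}$, then tensoring $n/n_0$ independent blocks gives a set in $\Zmn$ of size $(\Gamma_{m,k})^{n - c\, n/\sqrt{n_0}}$, and $n/\sqrt{n_0}$ is minimized by taking $n_0=n$, i.e.\ no blocks at all. Choosing $n_0 = \Theta(\sqrt n)$ produces the weaker exponent $n - O(n^{3/4})$. The paper (like Kleinberg--Sawin--Speyer) works with all $n$ coordinates at once; the $O(\sqrt n)$ overhead comes from a Behrend-type $k$-colored sum-free set in $\FP$ with $P$ exponential in $n$ (Lemma \ref{lemmaBehrendZM}), whose size loses only a factor $\exp(-O(\sqrt{\log P})) = \exp(-O(\sqrt n))$.

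Second, Pebody's result is not a polynomial-identity or group-ring statement and it does not ``extract'' a sum-free subset from a base collection. Both Pebody's theorem and its generalization (Theorem \ref{theo-marginal}) are purely about probability distributions: $\nu_{m,k}$, the distribution on $\{0,\dots,m-1\}$ with expectation $(m-1)/k$ and entropy $\log\Gamma_{m,k}$, occurs as the marginal of an $S_k$-symmetric distribution on $T_{m-1,k}$ with full support. Its only role is to guarantee (via Proposition \ref{proprounding}) that there are roughly $e^{\HH(\tau)n}$ coordinate-typed $k$-tuples in $X_0^k$ summing to $(m-1)\cdot\ones$. There is no $\FF_m[x]$ or $\ZZ[\ZZ_m]^{\otimes k}$ structure anywhere in the argument.

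Third, and most importantly, the actual extraction mechanism is absent from your proposal. The paper pulls back a Behrend-type $k$-colored sum-free set in $\FP$ along a random linear map $\FPnk\to\FP$ to obtain subsets $X_1,\dots,X_k\su X_0$, and then counts \emph{isolated} candidate $k$-tuples. The genuinely new difficulty for $k>3$ — which you correctly sense must exist but misattribute — is controlling the expected number of candidate $k$-tuples sharing a coordinate with a given one (Proposition \ref{prop-xprimecount}). This is done through the entropy inequality of Lemma \ref{lemmasubspaceentropylowerbound} and Corollary \ref{corollarysubspaceentropylowerbound}, which has no analogue in your outline. Without this, the second-moment argument that makes the random sampling work has no support.
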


Note that in Theorem \ref{thmmain}, the constant factor of the $O(\sqrt{n})$-term does depend on $m$ and $k$.

The case $(k,m)=(3,2)$ in Theorem \ref{thmmain} was proved by Fu and Kleinberg \cite{FK14}, building on work of Coppersmith and
Winograd \cite{CW90} in the context of fast matrix multiplication algorithms. After Blasiak et al.\ \cite{BCCGNSU16} established the upper bound, Kleinberg, Sawin, and Speyer \cite{KSS17} proved Theorem \ref{thmmain} for $k=3$ and any $m\geq 2$. Their proof uses a statement that had been formulated as a conjecture in an earlier version of their paper and was then proved by Pebody \cite{PEBODY17}. In order to make the statement precise, we need some more notation.

For integers $r\geq 0$ and $\l\geq 2$, set
\[T_{r,\l}=\{ (a_1,a_2,...,a_\l) \in \mathbb{Z}^\l\mid a_1+\dots+a_\l= r,\  a_1,\dots,a_\l\geq 0\}.\]
Given a probability distribution $\tau$ on $T_{r,\l}$, one obtains $\l$ probability distributions on the set $\{ 0,\dots,r\}$ by taking the projections to the different coordinates.
For an $\l$-tuple $(a_1,\dots , a_\l)$ and a permutation $\sigma\in S_\l$, let $(a_1,\dots , a_\l)^\sigma = (a_{\sigma(1)},\dots,a_{\sigma(\l)})$
be the $\l$-tuple obtained from $(a_1,\dots,a_\l)$ by permuting the coordinates according to $\sigma$. A probability distribution $\tau$ on $T_{r,\l}$ is called \emph{$S_\l$-symmetric} if $\tau(a_1,\dots,a_\l)=\tau((a_1,\dots , a_\l)^{\sigma})$ for all $(a_1,\dots , a_\l)\in T_{r,\l}$ and all $\sigma\in S_\l$.
For an $S_\l$-symmetric probability distribution $\tau$ on $T_{r,\l}$, the $\l$ projections to the individual coordinates all give the same probability distribution $\mu(\tau)$ on $\{ 0,\dots,r\}$ and this distribution is called the \emph{marginal} of $\tau$. For every $a\in \lbrace 0,\dots,r\rbrace$, the distribution $\mu(\tau)$ satisfies
\[\mu(\tau)(a)=\sum_{\substack{a_2,\dots,a_\l\in \lbrace 0,\dots,r\rbrace\\a+a_2+\dots+a_\l=r}}\tau(a,a_2,\dots,a_\l).\]
For integers $m\geq 2$ and $k\geq 3$, let $\nu_{m,k}$ be the probability distribution on $\lbrace 0,\dots,m-1\rbrace$ given by
\[\nu_{m,k}(i)=\frac{\gamma_{m,k}^i}{1+\gamma_{m,k}+\dots+\gamma_{m,k}^{m-1}}.\]
The probability distribution $\nu_{m,k}$ has expectation $(m-1)/k$ and entropy $\log \Gamma_{m,k}$ (see Lemma \ref{lemmanupk}). One can also show that among all probability distributions on $\lbrace 0,\dots,m-1\rbrace$ with expectation $(m-1)/k$, the distribution $\nu_{m,k}$ has the maximum entropy, which gives some motivation for considering this particular distribution.

The conjecture in the first version of the paper \cite{KSS17} of Kleinberg, Sawin, and Speyer stated that for every $m\geq 2$ the probability distribution $\nu_{m,3}$ occurs as the marginal of an $S_3$-symmetric probability distribution on $T_{m-1,3}$. As mentioned above, this was proved by Pebody \cite{PEBODY17}. Norin \cite{NORIN16} also proposed a proof.

In order to prove Theorem \ref{thmmain}, we need a generalization of the result of Pebody to $k>3$. The following theorem generalizes a slightly stronger version of the statement to $k>3$.

\begin{theorem}\label{theo-marginal} For all integers $m\geq 2$ and $k\geq 3$, the probability distribution $\nu_{m,k}$ occurs as the marginal of an $S_k$-symmetric probability distribution $\tau_{m,k}$ on $T_{m-1,k}$ with $\tau_{m,k}(t)>0$ for every $t\in T_{m-1,k}$.
\end{theorem}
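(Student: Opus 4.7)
My plan is to construct $\tau_{m,k}$ explicitly by prescribing its value on each $S_k$-orbit of $T_{m-1,k}$, and then verify the marginal condition and strict positivity using the defining equation of $\gamma_{m,k}$.

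A necessary condition is that $\nu_{m,k}$ has mean $(m-1)/k$: any marginal of an $S_k$-symmetric distribution on $T_{m-1,k}$ has this mean, since the $k$ coordinates are exchangeable and sum to $m-1$. This condition follows automatically from the first-order optimality of $\gamma_{m,k}$ as the minimizer of $(1+\gamma+\cdots+\gamma^{m-1})/\gamma^{(m-1)/k}$: differentiating the logarithm and setting the derivative to zero at $\gamma_{m,k}$ yields $\sum_{i=0}^{m-1} i \gamma_{m,k}^i = \tfrac{m-1}{k} \sum_{i=0}^{m-1} \gamma_{m,k}^i$. The base case $m=2$ is immediate: $T_{1,k}$ consists of a single $S_k$-orbit (the orbit of $(1,0,\dots,0)$), and its uniform distribution has marginal $\nu_{2,k}$ (since one checks $\gamma_{2,k} = 1/(k-1)$).

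For the main construction, I would parametrize $S_k$-symmetric probability distributions on $T_{m-1,k}$ by their nonnegative orbit-weights (indexed by partitions of $m-1$ into at most $k$ parts) and write the marginal condition as a linear system in these weights. Inspection of small cases (e.g.\ $m=3,4,5$ with $k=3$) suggests prescribing each orbit-weight as a specific rational function in $\gamma_{m,k}$, so that the marginal condition reduces to a polynomial identity in $\gamma_{m,k}$ that follows from its defining equation. Concretely, I would first try a symmetric product ansatz $\tau(t) \propto \prod_{i=1}^{k} f(t_i)$ and solve the resulting self-consistency equation for $f$; if this is insufficient to match $\nu_{m,k}$ exactly, I would augment it with correction terms indexed by orbit type.

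Strict positivity then follows from $\gamma_{m,k} \in (0,1)$ combined with elementary algebraic inequalities on the orbit-weights, and, if needed, a small perturbation by an $S_k$-symmetric distribution in the kernel of the marginal map (this kernel is nontrivial for $k \geq 3$ and $m$ sufficiently large, since the orbit space has dimension larger than $m-1$). The main obstacle is identifying the right orbit-weight prescription in the inductive/main step: for $k=3$, Pebody's argument uses an identity specific to triples, but for $k>3$ the orbit space of $T_{m-1,k}$ grows as the number of partitions of $m-1$ into at most $k$ parts, so a new combinatorial/algebraic identity is required to give a uniform construction that simultaneously yields the correct marginal and strict positivity. This is precisely where the ``key new ideas'' mentioned in the abstract come in, presumably via a carefully chosen generating function identity or a structural decomposition adapted to $\nu_{m,k}$.
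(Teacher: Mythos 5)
Your mean-condition observation and $m=2$ base case are correct (cf.\ Lemma \ref{lemmanupk}), but the main step is left as a plan rather than a proof: you propose a product ansatz $\tau(t)\propto\prod_i f(t_i)$, say you would ``augment with correction terms if needed,'' and fix positivity by a perturbation, yet you neither exhibit $f$, nor show that the nonlinear self-consistency equations have a solution, nor prove that the perturbation stays in the interior; and you explicitly concede that ``a new combinatorial/algebraic identity is required'' for the key step. That identity is the content of the theorem, so this is a genuine gap. (For what it is worth, the ansatz is not obviously hopeless: checking $(m,k)=(3,3),(4,3),(3,4)$ by hand, the consistency equations for $f$ reduce to the very polynomial that defines $\gamma_{m,k}$; but establishing this coincidence for all $m,k$, and establishing that a strictly positive $f$ exists, would each require its own nontrivial argument, which you do not give.)

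The paper's proof is entirely different and uses no ansatz. It reduces Theorem \ref{theo-marginal} to Theorem \ref{theo-strict}, a statement about any distribution $\psi$ on $\{0,\dots,n\}$ of mean $n/k$ satisfying a short list of monotonicity and convexity inequalities, and then, via the equivalence in Lemma \ref{lemma-pebody-simple}, to expressing $\psi$ as a positive combination of the $n$-simple scaled distributions $\indic_{a_1}+\cdots+\indic_{a_k}$. Introducing $n$-tame scaled distributions (Definition \ref{defi-tame}), the paper proves Proposition \ref{propo-tame-main} by strong induction on $n$ in steps of $k$: Lemma \ref{lemma-slack} (built on Lemma \ref{lemma-alpha-j}) reduces to the case where condition (iii) of tameness holds with equality; one then peels off the mass on $\{0\}\cup\{n-k+2,\dots,n\}$ with a few simple distributions and shifts indices down by $1$ to land on $\{0,\dots,n-k\}$; the crucial inequality Lemma \ref{lemma-inequality}, which conditions (iii) and (iv) are engineered to yield, guarantees the resulting distribution is $(n-k)$-tame, so the induction closes. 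The inductive peel-and-shift, not a closed-form construction, is what makes the argument work uniformly for all $k\geq 3$ and $m\geq 2$; if you want to pursue the ansatz route instead, the real work you must supply is the general identity equating the consistency constraint with the defining equation of $\gamma_{m,k}$, together with a positivity argument for the resulting $f$.
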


Our proof of Theorem \ref{theo-marginal} was inspired by the first version of Pebody's proof \cite{PEBODY16} of the conjecture of Kleinberg, Sawin, and Speyer. However, our proof is not a direct generalization of Pebody's work and if one restricts to $k=3$ in our proof, one obtains a significantly different proof. In particular, we avoid the large case analysis in \cite{PEBODY16}, which would become even larger when trying to generalize it to $k>3$. Pebody later replaced the first version of his proof by yet another, much shorter proof \cite{PEBODY17}. However, it seems to be difficult to find an equally short and clean argument for the case $k>3$.

Using the upper bound on the size of $3$-colored sum-free sets in $\FF_p^n$, Fox and the first author \cite{FL17} proved a polynomial bound for the arithmetic triangle removal lemma in $\mathbb{F}_p^n$. The result of Kleinberg, Sawin, and Speyer \cite{KSS17} then implies that the exponent in the bound is sharp. In the case of $k>3$, polynomial bounds for the arithmetic $k$-cycle removal lemma in $\mathbb{F}_p^n$ were given by Fox and both authors \cite{FLS17}. Similarly as in the triangle case, Theorem \ref{thmmain} implies lower bounds on the possible exponents in the arithmetic $k$-cycle removal lemma in $\mathbb{F}_p^n$, but they do not match the exponents that Fox and the authors obtained in \cite{FLS17}. It would be interesting to close this gap and determine the optimal exponent for the arithmetic $k$-cycle removal lemma in $\mathbb{F}_p^n$.

Arithmetic removal lemmas were introduced by Green in 2005 \cite{GREEN05}, and since then the problem of improving the bounds in arithmetic removal lemmas has been widely studied \cite{BGRS12, BX09, FOX11, FL17, FK14, HX15}. This is in part due to the close connection to property testing. Indeed, the construction of Fu and Kleinberg \cite{FK14} establishing Theorem \ref{thmmain} in the case $(k,m)=(3,2)$, as well as earlier work of Bhattacharya and Xie \cite{BX09} in this direction, were presented as proving limits on the possible efficiency of randomized algorithms that test triangle freeness in $\mathbb{F}_2^n$. Theorem \ref{thmmain} gives a limit on the possible efficiency of testing $k$-cycle-freeness in $\Fpn$. For more details see \cite{BX09}, \cite{FK14}.

It is worth noting that the proof of Theorem \ref{thmmain} is not a straightforward generalization of the work of Kleinberg, Sawin, and Speyer \cite{KSS17} for the case $k=3$. Their argument uses a random sampling process to find a large $3$-colored sum-free set within a certain collection of 3-tuples. Although our approach for proving Theorem \ref{thmmain} is the same as in \cite{KSS17}, serious challenges arise with the probabilistic sampling argument. The main difficulty in generalizing the work in \cite{KSS17} arises in proving Proposition \ref{prop-xprimecount}, which states that there cannot be too many special pairs of $k$-tuples with increased conditional probabilities in the sampling process. This fact makes the probabilistic sampling argument work. The proposition has a much simpler proof in the case $k=3$ than in the case $k>3$. The most crucial tool for the proof of the proposition for $k>3$ is an entropy inequality that we will introduce in Section \ref{sect-xprime-count}. Furthermore, in the case of $k=3$, one also only needs a much weaker version of Proposition \ref{proprounding}, and the corresponding argument is just a single paragraph in \cite{KSS17}.

A rough outline of the proof of Theorem \ref{thmmain} is as follows. We first reduce Theorem \ref{thmmain} to a similar statement for $k$-tuples of vectors in $\Zn$ summing to $(m-1)\cdot \ones$, the vector with each coordinate equal to $m-1$. We then start with a set $X_0$ of vectors in $\lbrace 0,\dots,m-1 \rbrace^n$ such that for each vector in $X_0$ the distribution of its entries is roughly the distribution $\nu_{m,k}$ on $\lbrace 0,\dots,m-1 \rbrace$ that we defined above. Using  Theorem \ref{theo-marginal}, we can ensure that there are $k$-tuples $(x_1,\dots,x_k)\in X_0^k$ with $x_1+\dots+x_k=(m-1)\cdot \ones$. We would like to find a large collection $(x_{1,j}, x_{2,j}, \dots, x_{k,j})_{j=1}^L$ of such $k$-tuples in $X_0^k$ such that the only solutions to $x_{1,j_1}+x_{2,j_2}+\dots +x_{k,j_k}=(m-1)\cdot \ones$ are $j_1=\dots=j_k$.

In order to do so, we perform a random sampling argument as in \cite{KSS17}. More specifically, we consider carefully chosen random subsets $X_1,\dots,X_k$ of $X_0$. Roughly speaking, $X_1,\dots,X_k\su X_0$ are obtained from a $k$-colored sum-free set in $\ZZ_P$ for a carefully chosen prime $P$ via considering inverse images under certain randomly chosen affine-linear maps $\ZZ^n \to \ZZ_P$. We then consider those $k$-tuples $(x_1,\dots,x_k)\in X_1\times\dots\times X_k$ with $x_1+\dots+x_k=(m-1)\cdot \ones$. We prove that, in expectation, there is a large number of ``isolated'' such $k$-tuples, i.e.\ $k$-tuples $(x_1,\dots,x_k)\in X_1\times\dots\times X_k$ with $x_1+\dots+x_k=(m-1)\cdot \ones$ that do not share the same element $x_i$ with any other such $k$-tuple in $X_1\times\dots\times X_k$ (for any $i$). These isolated $k$-tuples will form the desired collection $(x_{1,j}, x_{2,j}, \dots, x_{k,j})_{j=1}^L$ of $k$-tuples of vectors in $\Zn$. Although this is the same strategy as for $k=3$ in \cite{KSS17}, proving that the expected number of isolated $k$-tuples is large is much harder for $k>3$ than for $k=3$. The argument for $k>3$ crucially relies on our new entropy inequality in Section \ref{sect-xprime-count}.

This paper is organized as follows. The first part of the paper is devoted to proving Theorem \ref{thmmain} assuming Theorem \ref{theo-marginal}. We start with some preliminaries about entropy in Section \ref{sect-entropy-prelim}. Afterwards, we give the proof of Theorem \ref{thmmain} in Section \ref{sect-proof-main}, but we postpone several lemmas and propositions to Sections \ref{sect-xprime-count} and \ref{sect-lemmas1}. In particular, the proof of Propostion \ref{prop-xprimecount} is the main difficulty in the first part of this paper and takes up all of Section \ref{sect-xprime-count}. 
In the second part of the paper, starting from Section \ref{sect-marginal}, we prove Theorem \ref{theo-marginal}. Finally, in Section \ref{sect-upperbound} we give a proof of Theorem \ref{thmupperbound} for the reader's convenience.

\textit{Notation.} All logarithms are base $e$. The set of non-negative real numbers is denoted by $\mathbb{R}_{\geq 0}$, and $\ones$ denotes the all-ones vector with $n$ entries. For any integer $a$, let $\indic_a$ denote the indicator function of $a$. That is, $\indic_a(x)=1$ if $x=a$ and $\indic_a(x)=0$ otherwise.

\section{Preliminaries on Entropy}
\label{sect-entropy-prelim}

This section covers some preliminaries about entropy. Some facts are stated without proof, their proofs can be found, for example, in \cite[chapter 15.7]{AlonSpencer}.
All random variables in this section are assumed to be random variables defined on a finite ground set.

Given a probability distribution $\omega$ on a finite set $S$, the \emph{entropy} of $\omega$ is defined as
\[\HH(\omega)=\sum_{s \in S} -\omega({s}) \log \omega({s}).\]
Note that $\HH(\omega)\geq 0$. With a slight abuse of notation, we will also write $\HH(X)$ instead of $\HH(\omega)$, if $X$ is a random variable on $S$ with distribution $\omega$. Given several random variables $X_1,X_2,\dots,X_k$, we will write $\HH(X_1,X_2,\dots,X_k)$ for the entropy of the joint distribution of the variables $X_1,X_2,\dots,X_k$ (which is a distribution on $S^k$).

Given a finite set $S$, the uniform distribution on $S$ is the (unique) distribution $\omega$ on $S$ with maximum entropy. So for every probability distribution $\omega$ on $S$ we have $\HH(\omega)\leq \log(\vert S\vert)$.
For a given finite set $S$, entropy is a concave function on probability distributions on $S$. In other words, for any two  probability distributions $\omega_0$ and $\omega_1$ on $S$ and any real number $0\leq t\leq 1$ we have
\[\HH(t\omega_1+(1-t)\omega_0)\geq t\HH(\omega_1)+(1-t)\HH(\omega_0).\]

Suppose that $Y$ is a random variable on a finite set $S$, and $X$ is a random variable on any finite set. Then the \emph{conditional entropy} $\HH(X\mid Y)$ is defined as
\[\HH(X\mid Y)=\sum_{s \in S} \P(Y=s) \HH(X\mid Y=s).\]
Here, $\HH(X\mid Y=s)$ denotes the entropy of the conditional distribution of $(X|Y=s)$. One can show that
\begin{equation}\label{eq-entropy-conditional-difference}
\HH(X\mid Y)=\HH(X,Y)-\HH(Y),
\end{equation}
and therefore
\[\HH(X,Y)\geq \HH(Y).\]
Given a sequence of random variables $X_1,X_2,...,X_m$,  repeatedly applying (\ref{eq-entropy-conditional-difference}) yields
\[\HH(X_1,X_2,\dots,X_m)=\HH(X_1)+\HH(X_2\mid X_1)+\dots +\HH(X_m\mid X_1,X_2,...,X_{m-1}).\]
For any random variables $X,Y,Z$, we have
\[\HH(X\mid Y,Z) \le \HH(X\mid Y).\]
If $X$ and $Y$ are random variables such that $Y$ is completely determined by $X$, we have
\begin{equation}\label{eq-entropy-determined}
\HH(X)=\HH(X,Y)\geq \HH(Y)
\end{equation}
and
\begin{equation}\label{eq-entropy-determined-difference}
\HH(X\mid Y)=\HH(X,Y)-\HH(Y)=\HH(X)-\HH(Y).
\end{equation}

If $\tau$ is an $S_\l$-symmetric probability distribution on $T_{r,\l}$ for some $r\geq 0$ and $\l \geq 2$, then the marginal $\mu(\tau)$ is the projection of $\tau$ to the first coordinate. So by (\ref{eq-entropy-determined}) we have
\begin{equation}\label{eq-entropyprojection}
\HH(\mu(\tau))\leq \HH(\tau).
\end{equation}

The following lemma is a well-known approximation of multinomial coefficients, see, e.g., \cite[Lemma 3]{KSS17}.

\begin{lemma} \label{lemmaentropycounting}
Let $\omega$ be a probability distribution on a finite set $S$, and let $n$ be a positive integer. Assume that for every $s\in S$ the probability $\omega(s)$ is an integer multiple of $1/n$. Let $M$ be the number of sequences $s_1,\dots,s_n$ of elements of $S$ in which each element $s\in S$ occurs exactly $\omega(s)n$ times (this means, sampling a random element from the sequence $s_1,\dots,s_n$ recovers the probability distribution $\omega$ on $S$). Then $M$ satisfies
\[\frac{e^{\HH(\omega)n}}{e^{\vert S\vert}n^{\vert S\vert}}\leq M\leq e^{\HH(\omega)n}.\]
\end{lemma}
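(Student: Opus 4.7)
The plan is to recognise $M$ as the multinomial coefficient $M = n!/\prod_{s\in S}(\omega(s)n)!$ and then to prove both bounds through a single probabilistic interpretation. If $X_1,\dots,X_n$ are drawn i.i.d.\ from $\omega$, every specific sequence $(s_1,\dots,s_n)\in S^n$ in which each $s\in S$ appears exactly $\omega(s)n$ times has probability exactly $\prod_{s\in S}\omega(s)^{\omega(s)n}$; hence the probability that $(X_1,\dots,X_n)$ falls into the set counted by $M$ equals $M\cdot\prod_{s\in S}\omega(s)^{\omega(s)n}$. Throughout, values $s$ with $\omega(s)=0$ can be ignored: they contribute $0\cdot \log 0 = 0$ to $\HH(\omega)$ and a vacuous $0!=1$ factor to $M$.

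The upper bound is then immediate: the probability above is at most $1$, so $M\le \prod_{s}\omega(s)^{-\omega(s)n}=e^{\HH(\omega)n}$.

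For the lower bound I would plug in Stirling's inequalities in the forms $n!\ge (n/e)^n$ and $k!\le e\sqrt{k}\,(k/e)^k$ for $k\ge 1$, applied to numerator and denominator of the multinomial coefficient. Because the exponents $\omega(s)n$ sum to $n$, the factors $(\omega(s)n/e)^{\omega(s)n}$ in the denominator collect to $n^n e^{-n}\prod_s\omega(s)^{\omega(s)n}$, which cancels against the $(n/e)^n$ coming from the numerator $n!$. The surviving pieces give $M\ge \prod_s\omega(s)^{-\omega(s)n}/\bigl(e^{|S|}\prod_s\sqrt{\omega(s)n}\bigr)$, and then the bound $\prod_s\sqrt{\omega(s)n}\le n^{|S|/2}\le n^{|S|}$ together with $\prod_s\omega(s)^{-\omega(s)n}=e^{\HH(\omega)n}$ yields the desired inequality $M\ge e^{\HH(\omega)n}/(e^{|S|}n^{|S|})$. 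This is pure bookkeeping with no real conceptual obstacle; the one mild subtlety is the case $\omega(s)=0$, where both the multinomial factor and Stirling's bound should simply be omitted from the relevant products. (A Stirling-free alternative, yielding the slightly weaker but still sufficient bound $M\ge e^{\HH(\omega)n}/(n+1)^{|S|}$, is to observe that the number of possible frequency vectors on $S$ of size $n$ is at most $(n+1)^{|S|}$ and that the type $\omega$ is the mode of the multinomial distribution, so the probability computed above is at least $1/(n+1)^{|S|}$.)
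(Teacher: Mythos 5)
Your proof is correct and follows essentially the same route as the paper: recognize $M$ as a multinomial coefficient, get the upper bound from $M\prod_s\omega(s)^{\omega(s)n}\le 1$ (your probabilistic phrasing is just the paper's multinomial-sum argument in disguise), and get the lower bound from Stirling applied to numerator and denominator. The parenthetical method-of-types alternative you sketch is a genuinely different and valid lower-bound argument; in fact it is not weaker, since $(n+1)^{|S|}\le (en)^{|S|}$ for $n\ge 1$.
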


\begin{proof}We can assume that $\omega(s)>0$ for every $s\in S$, because we can delete all elements $s\in S$ with $\omega(s)=0$.
First, note that $M$ can be described as a multinomial coefficient:
\[M=\binom{n}{(\omega(s)n)_{s\in S}}.\]
For the lower bound, we use the simple Stirling approximation bounds, namely
\[\sqrt{2\pi \l} \left(\frac{\l}{e}\right)^\l \le \l! \le e \sqrt{\l} \left(\frac{\l}{e}\right)^\l.\]
for any positive integer $\l$. Now
\begin{multline*}
M=\binom{n}{(\omega(s)n)_{s\in S}}= \frac{n!}{\prod_{s\in S}(\omega(s)n)!} \ge \frac{\sqrt{2\pi n}\left(\frac{n}{e}\right)^n }{\prod_{s\in S}\left(e\sqrt{\omega(s) n}\left(\frac{\omega(s)n}{e}\right)^{\omega(s)n}\right)}\\ 
=\frac{\sqrt{2\pi}}{e^{\vert S\vert}}\cdot \frac{\sqrt{n}}{\sqrt{\prod_{s\in S} \omega(s)n}}\cdot \frac{\left(\frac{n}{e}\right)^n}{\prod_{s\in S}\left(\frac{\omega(s) n}{e}\right)^{\omega(s)n}} \ge 
\frac{1}{e^{\vert S\vert} n^{\vert S\vert}}\cdot \prod_{s\in S}\omega(s)^{-\omega(s)n}=\frac{e^{\HH(\omega)n}}{e^{\vert S\vert}n^{\vert S\vert}}.
\end{multline*}
For the upper bound, note that we have by the multinomial sum theorem
\[1=\left(\sum_{s\in S}\omega(s)\right)^n\geq \binom{n}{(\omega(s)n)_{s\in S}} \prod_{s\in S}\omega(s)^{\omega(s)n}.\]
Here we only considered those terms in the expansion of $(\sum_{s\in S}\omega(s))^n$ that contain each term $\omega(s)$ precisely $\omega(s)n$ times. Now, rearranging yields
\[\binom{n}{(\omega(s)n)_{s\in S}} \le \prod_{s\in S}\omega(s)^{-\omega(s)n}=e^{\HH(\omega)n},\]
as desired.\end{proof}

The next lemma is also about counting sequences of elements of a set $S$, but under more restrictive conditions.

\begin{lemma} \label{lemmaentropycounting2}
Let $f:S\to S'$ be any function between finite sets $S$ and $S'$. Furthermore, let $\omega$ be a probability distribution on $S$ and let $z$ be a random variable on $S$ with distribution $\omega$. Let $n$ be a positive integer and let us fix $s_1',\dots, s_n'\in S'$. Now let $M$ be the number of sequences $s_1,\dots,s_n$ of elements of $S$ in which each element $s\in S$ occurs exactly $\omega(s)n$ times and such that $f(s_j)=s_j'$ for $1\leq j\leq n$. Then $M$  satisfies
\[\ M\leq e^{\HH(z\mid f(z))n}=e^{\HH(z)n-\HH(f(z))n}.\]
\end{lemma}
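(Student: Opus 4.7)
The plan is to group the positions $1,\dots,n$ according to the value of $s_j'$. For each $s'\in S'$, let $N(s')=|\{j\colon s_j'=s'\}|$. Since we require $f(s_j)=s_j'$, any valid sequence $s_1,\dots,s_n$ fills the $N(s')$ positions with $s_j'=s'$ using only elements of $f^{-1}(s')$. The multiplicity constraint ``each $s\in S$ occurs exactly $\omega(s)n$ times'' then decouples over $s'$: within the positions with $s_j'=s'$, each $s\in f^{-1}(s')$ must appear exactly $\omega(s)n$ times. Summing this over $s\in f^{-1}(s')$ forces the compatibility condition $N(s')=n\sum_{s\in f^{-1}(s')}\omega(s)=n\,\P(f(z)=s')$ for every $s'\in S'$. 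If this equality fails for even one $s'$, then $M=0$ and the bound is trivial, so I may assume it holds throughout.

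Under this assumption the count factorizes as a product of multinomial coefficients:
\[M = \prod_{s'\in S'}\binom{N(s')}{(\omega(s)n)_{s\in f^{-1}(s')}}.\]
Each factor is bounded by the multinomial-sum argument used in the upper-bound half of Lemma \ref{lemmaentropycounting}, applied on the set $f^{-1}(s')$ with sample size $N(s')$ and distribution $\omega_{s'}(s)=\omega(s)n/N(s')$. The key observation is that, by the compatibility condition, $\omega_{s'}(s)=\omega(s)/\P(f(z)=s')=\P(z=s\mid f(z)=s')$, so $\HH(\omega_{s'})=\HH(z\mid f(z)=s')$. Consequently
\[\binom{N(s')}{(\omega(s)n)_{s\in f^{-1}(s')}}\le e^{\HH(z\mid f(z)=s')\,N(s')}.\]

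Taking the product over $s'$ and using $N(s')=n\,\P(f(z)=s')$ gives
\[\log M \le \sum_{s'\in S'}\HH(z\mid f(z)=s')\,N(s')=n\sum_{s'\in S'}\P(f(z)=s')\,\HH(z\mid f(z)=s')=n\,\HH(z\mid f(z)),\]
by the very definition of conditional entropy. The second form of the bound, $\HH(z\mid f(z))=\HH(z)-\HH(f(z))$, then follows from equation \eqref{eq-entropy-determined-difference} since $f(z)$ is completely determined by $z$. There is no real obstacle here beyond careful bookkeeping: the content of the lemma is simply that the split of the counting problem into fibers of $f$ assembles the per-fiber entropies $\HH(z\mid f(z)=s')$, weighted by $\P(f(z)=s')$, into the conditional entropy $\HH(z\mid f(z))$.
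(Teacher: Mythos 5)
Your proof is correct and follows essentially the same approach as the paper: grouping positions by the value of $s_j'$, observing the compatibility condition $N(s')=n\,\P(f(z)=s')$ (else $M=0$), factorizing $M$ as a product of multinomial coefficients over the fibers $f^{-1}(s')$, and bounding each factor via the upper bound in Lemma~\ref{lemmaentropycounting} to recover the conditional entropy. No substantive differences.
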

\begin{proof}
For every $s'\in S'$, set
\[J_{s'}=\lbrace j\in \lbrace 1,\dots,n\rbrace\mid s_j'=s'\rbrace.\]
If $M=0$, the statement is trivially true. Hence we may assume that there exists at least one sequence $s_1,\dots,s_n\in S$ with the desired properties. Then in particular all the numbers $\omega(s)n$ for $s\in S$ are integers.

We claim that $\vert J_{s'}\vert=\sum_{s\in f^{-1}(s')}\omega(s)n$ for every $s'\in S'$. To see this, let us temporarily fix a sequence $s_1,\dots, s_n\in S$ satisfying all of the conditions in the lemma and let $s'\in S$. Since each $s\in f^{-1}(s')$ occurs exactly $\omega(s)n$ times in the sequence $s_1,\dots, s_n$, there are exactly $\sum_{s\in f^{-1}(s')}\omega(s)n$ choices for $j\in \lbrace 1,\dots,n\rbrace$ such that $s_j'=f(s_j)$ equals $s'$. Thus $\vert J_{s'}\vert=\sum_{s\in f^{-1}(s')}\omega(s)n$ as desired.

In order to form a sequence $s_1,\dots, s_n$ with the desired conditions, for each $s'\in S'$ we must distribute the elements of $f^{-1}(s')$ with the desired multiplicities among the index set $J_{s'}$. So it is not hard to see that
\[M=\prod_{s'\in S'}\binom{\vert J_{s'}\vert}{(\omega(s)n)_{s\in f^{-1}(s')}}.\]
Note that for each $s\in S$ we have $\omega(s)=\P(z=s)$ and therefore
\[\vert J_{s'}\vert=\sum_{s\in f^{-1}(s')}\omega(s)n=\sum_{s\in f^{-1}(s')}\P(z=s)\cdot n=\P(z\in f^{-1}(s'))\cdot n=\P(f(z)=s')\cdot n\]
for every $s'\in S'$. Furthermore if $s'\in S'$ and $s\in f^{-1}(s')$, then
\[\omega(s)n=\P(z=s)\cdot n=\frac{\P(z=s)}{\P(f(z)=s')}\cdot \vert J_{s'}\vert=\P(z=s\mid f(z)=s')\cdot \vert J_{s'}\vert.\]
In particular, $\P(z=s\mid f(z)=s')$ is an integer multiple of $1/\vert J_{s'}\vert$.
Now, we obtain
\[M=\prod_{s'\in S'}\binom{\vert J_{s'}\vert}{\left(\P(z=s\mid f(z)=s')\cdot \vert J_{s'}\vert\right)_{s\in f^{-1}(s')}}.\]
For each $s'\in S'$, we can apply the upper bound in Lemma \ref{lemmaentropycounting} to the distribution of $z$ conditioned on $f(z)=s'$ and obtain
\[\binom{\vert J_{s'}\vert}{\left(\P(z=s\mid f(z)=s')\cdot \vert J_{s'}\vert\right)_{s\in f^{-1}(s')}}\leq \exp(\HH(z\mid f(z)=s')\cdot \vert J_{s'}\vert).\]
Thus,
\[M\leq \exp\left(\sum_{s'\in S'}\HH(z\mid f(z)=s')\cdot \vert J_{s'}\vert\right)=\exp\left(\sum_{s'\in S'}\HH(z\mid f(z)=s')\P(f(z)=s')\cdot n\right)
= \exp(\HH(z\mid f(z)) n),\]
as desired. Note that $\HH(z\mid f(z))=\HH(z)-\HH(f(z))$ by (\ref{eq-entropy-determined-difference}).\end{proof}

Finally, we need one more lemma. Basically, this lemma states that perturbing a probability distribution slightly does not change the entropy very much.

\begin{lemma} \label{lemmaentropiesclose}
Let $\omega_0$ and $\omega_1$ be two distributions on a finite set $S$, and suppose that $c>0$ satisfies $\omega_0(s)\geq c$ and $\omega_1(s)\geq c$ for every $s\in S$. Then
\[|\HH(\omega_1)-\HH(\omega_0)| \le \|\omega_1-\omega_0\|_1\log(1/c).\]
\end{lemma}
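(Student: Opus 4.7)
The plan is to reduce the bound to a one-dimensional calculus estimate on each coordinate $s \in S$ and then sum, exploiting a cancellation coming from the fact that $\omega_0$ and $\omega_1$ have the same total mass $1$. Writing $h(x) = -x\log x$ for $x \in (0,1]$, we have $\HH(\omega_i) = \sum_{s \in S} h(\omega_i(s))$ and $h'(x) = -\log x - 1$.

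First I would apply the mean value theorem to $h$ on the interval with endpoints $\omega_0(s)$ and $\omega_1(s)$ (which is contained in $[c,1]$ by hypothesis), obtaining for each $s \in S$ some $\xi_s \in [c,1]$ with
\[ h(\omega_1(s)) - h(\omega_0(s)) = h'(\xi_s)\bigl(\omega_1(s) - \omega_0(s)\bigr) = -(\log \xi_s + 1)\bigl(\omega_1(s) - \omega_0(s)\bigr); \]
coordinates where $\omega_0(s) = \omega_1(s)$ contribute $0$ and can be handled by choosing $\xi_s$ arbitrarily in $[c,1]$. Summing over $s$ and using $\sum_{s \in S}\omega_1(s) = \sum_{s \in S}\omega_0(s) = 1$ to annihilate the constant $-1$ term,
\[ \HH(\omega_1) - \HH(\omega_0) = -\sum_{s \in S} \log \xi_s \cdot \bigl(\omega_1(s) - \omega_0(s)\bigr) - \sum_{s \in S}\bigl(\omega_1(s) - \omega_0(s)\bigr) = -\sum_{s \in S} \log \xi_s \cdot \bigl(\omega_1(s) - \omega_0(s)\bigr). \]

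Finally, I would apply the triangle inequality and use $\xi_s \in [c,1]$, which gives $|\log \xi_s| = \log(1/\xi_s) \le \log(1/c)$, so that
\[ |\HH(\omega_1) - \HH(\omega_0)| \le \sum_{s \in S} |\log \xi_s|\cdot |\omega_1(s) - \omega_0(s)| \le \log(1/c)\cdot \|\omega_1 - \omega_0\|_1, \]
as desired. There is no real obstacle here; the only subtle point is the cancellation of the $-1$ terms (without which one would get a useless bound involving $\log(1/c) + 1$ and would run into trouble when $c$ is close to $1$). An equivalent, slightly more conceptual variant would be to write $\omega_t = (1-t)\omega_0 + t\omega_1$ and integrate $\tfrac{d}{dt}\HH(\omega_t) = -\sum_s(\omega_1(s)-\omega_0(s))\log\omega_t(s)$ from $0$ to $1$, with $\omega_t(s) \ge c$ giving the same bound; I would choose whichever presentation is more uniform with the rest of Section \ref{sect-entropy-prelim}.
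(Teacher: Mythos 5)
Your proof is correct and close in spirit to the paper's, but the mechanics are genuinely different: you apply the mean value theorem coordinate-wise to the scalar function $h(x)=-x\log x$, producing a separate intermediate point $\xi_s$ for each $s\in S$, and then use $\sum_s(\omega_1(s)-\omega_0(s))=0$ to kill the constant $-1$ in $h'$. The paper instead forms the path $\omega_t = t\omega_1 + (1-t)\omega_0$, computes the single derivative
\[
f'(t) = -\sum_{s\in S}(\omega_1(s)-\omega_0(s))\log\omega_t(s),
\]
in which the $-1$ terms have already cancelled inside the differentiation, and then applies the mean value theorem once to $f(t)=\HH(\omega_t)$. Both arguments hinge on the same two facts (the bound $\omega\geq c$ pointwise along the relevant interval, and the total-mass cancellation), but yours is marginally more elementary in that it only needs the derivative of a one-variable function and never differentiates entropy as a function of the distribution. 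The paper's version has the mild advantage of producing a single intermediate point and a cleaner one-line application of the MVT; your version requires the small remark about coordinates where $\omega_0(s)=\omega_1(s)$ (which you correctly handle). You already noticed the equivalence and offered the paper's variant as an alternative, so there is nothing to correct.
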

\begin{proof}
For any real number $0\leq t\leq 1$, let $\omega_t=t\omega_1+(1-t)\omega_0$ (note that for $t=0$ and $t=1$ we indeed recover $\omega_0$ and $\omega_1$). Then for each $0\leq t\leq 1$ and each $s\in S$ we have
\[\omega_t(s)=t\omega_1(s)+(1-t)\omega_0(s)=t(\omega_1(s)-\omega_0(s))+\omega_0(s).\]
In particular, $\omega_t(s)=t\omega_1(s)+(1-t)\omega_0(s)\geq c$. For each $0\leq t\leq 1$, set $f(t)=\HH(\omega_t)$, so
\[f(t)=-\sum_{s\in S}\omega_t(s)\log \omega_t(s).\]
It is easy to check that $f$ is a continuous function on the interval $[0,1]$ and it is differentiable on the open interval $(0,1)$. For each $t\in (0,1)$ we have (using that $\sum_s \omega_0(s)=\sum_s \omega_1(s)=1$)
\[f'(t)=-\sum_{s\in S} \left((\omega_1(s)-\omega_0(s))\log\omega_t(s)+\omega_t(s)\frac{1}{\omega_t(s)}(\omega_1(s)-\omega_0(s))\right)=-\sum_{s\in S}(\omega_1(s)-\omega_0(s))\log\omega_t(s).\]
Hence
\[\vert f'(t)\vert =\left\vert\sum_{s\in S} (\omega_1(s)-\omega_0(s))\log (1/\omega_t(s))\right\vert\le \sum_{s\in S} \vert \omega_1(s)-\omega_0(s)\vert\log (1/c) = \Vert\omega_1-\omega_0\Vert_1\log(1/c).\]
By the mean value theorem, we now obtain
\[\vert\HH(\omega_1)-\HH(\omega_0)\vert = \vert f(1)-f(0)\vert \le \sup_{t \in (0,1)} \vert f'(t)\vert \le \|\omega_1-\omega_0\|_1\log(1/c),\]
as desired.\end{proof}

\section{Proof of Theorem \ref{thmmain}}
\label{sect-proof-main}

The goal of this section is to prove Theorem \ref{thmmain}, that means to show that there is a sufficiently large $k$-colored sum-free set in $\Zmn$. We will often use the bound $\vert T_{m-1,k}\vert\leq m^k$, which follows from the fact that $T_{m-1,k}\su \lbrace 0,\dots,m-1\rbrace^k$. The following proposition states that there is a large collection of $k$-tuples in $\Zn$ with certain conditions, the first of which is very similar to the condition for a $k$-colored sum-free set.

\begin{proposition} \label{proplowerboundinZ}
Let $m\geq 2$ and $k\geq 3$ be fixed, and let $n$ be divisible by $k$ and tending to infinity. Then there exists a collection of $k$-tuples $(x_{1,j}, x_{2,j}, \dots, x_{k,j})_{j=1}^L$ of elements of $\Zn$ with size $L\geq (\Gamma_{m,k})^{n-O(\sqrt{n})}$ such that
\begin{itemize}
\item for all $j_1,\dots, j_k\in \lbrace 1,\dots, L\rbrace$
\[x_{1,j_1}+x_{2,j_2}+\dots +x_{k,j_k}=(m-1)\cdot \ones \quad \text{ if and only if }\quad  j_1=j_2=...=j_k,\]
\item all coordinates of all $x_{i,j}$ are in the set $\lbrace 0,1,\dots,m-1\rbrace$, and
\item for each $x_{i,j}$ the sum of its $n$ coordinates equals $(m-1)n/k$.
\end{itemize}
\end{proposition}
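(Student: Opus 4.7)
The plan is to follow and generalize the random-sampling framework of Kleinberg--Sawin--Speyer \cite{KSS17} from $k=3$ to arbitrary $k\geq 3$. First, I use Theorem~\ref{theo-marginal} to fix a strictly positive $S_k$-symmetric distribution $\tau_{m,k}$ on $T_{m-1,k}$ with marginal $\nu_{m,k}$, and pass to rational approximations $\wt\tau,\wt\nu$ with denominator $n$ (preserving $S_k$-symmetry and strict positivity), losing only $O(n^{-1/2})$ in entropy by Lemma~\ref{lemmaentropiesclose}. Let $X_0$ be the set of vectors in $\{0,\dots,m-1\}^n$ whose entry-type is exactly $\wt\nu$. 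Since $\nu_{m,k}$ has expectation $(m-1)/k$, every $x\in X_0$ automatically has coordinate sum $(m-1)n/k$, and Lemma~\ref{lemmaentropycounting} gives $\sabs{X_0}\geq\Gamma_{m,k}^{n-O(\sqrt n)}$. Let $\mathcal T\su X_0^k$ consist of those $k$-tuples $(x_1,\dots,x_k)$ whose coordinate-pattern-type across $j\in\{1,\dots,n\}$ is exactly $\wt\tau$; every tuple in $\mathcal T$ sums coordinatewise to $(m-1)\ones$, and $\sabs{\mathcal T}\geq e^{\HH(\tau_{m,k})n-O(\sqrt n)}$.

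Next I randomize. Fix a prime $P$ (whose size will be chosen at the end), a $k$-colored sum-free set $\{(a_{1,s},\dots,a_{k,s})\}_{s=1}^N$ in $\ZZ_P$ with $N\geq P^{1-o(1)}$ (via a Behrend-type construction), and uniformly random affine-linear maps $\varphi_1,\dots,\varphi_k\colon\ZZ^n\to\ZZ_P$ (subject to a mild compatibility condition tying $(m-1)\ones$ to the $a_{i,s}$'s, so that the sum-free structure of the $a_{i,s}$'s can be exploited). Set
\[
X_i=\{x\in X_0:\varphi_i(x)\in\{a_{i,1},\dots,a_{i,N}\}\},\qquad i=1,\dots,k,
\]
and $\mathcal T'=\mathcal T\cap(X_1\x\cdots\x X_k)$. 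Independence and uniformity of the $\varphi_i(x_i)$'s yield $\E\sabs{\mathcal T'}=\sabs{\mathcal T}\cdot(N/P)^k$. If I can additionally show $\E[\#\text{bad pairs in }\mathcal T'^{\,2}]\leq\tfrac12\E\sabs{\mathcal T'}$, where \emph{bad} means two distinct tuples of $\mathcal T'$ sharing at least one coordinate $x_i=x_i'$, then deleting one tuple from each bad pair produces a collection of isolated tuples of expected size $\geq\tfrac12\sabs{\mathcal T}(N/P)^k$. Such an isolated collection is automatically $k$-colored sum-free in the sense of the proposition: any off-diagonal sum equal to $(m-1)\ones$ would produce a tuple in $\mathcal T'$ sharing coordinates with two different surviving tuples, contradicting the no-shared-coordinate property.

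To bound bad pairs, fix a bad pair in $\mathcal T^2$ with $x_1=x_1'$. The probability that both tuples lie in $\mathcal T'$ factors as $\Pr[\varphi_1(x_1)\in\text{color}]\prod_{i\geq 2}\Pr[\varphi_i(x_i),\varphi_i(x_i')\in\text{color}]$, which, for $x_i,x_i'$ linearly independent mod $P$, is of order $N^{2k-1}/P^{2k-1}$. Summing over pairs yields
\[
\E[\#\text{bad pairs in }\mathcal T'^{\,2}]\ \lesssim\ \#\{\text{bad pairs in }\mathcal T^2\}\cdot N^{2k-1}/P^{2k-1}.
\]
Proposition~\ref{prop-xprimecount} supplies the key bound $\#\{\text{bad pairs in }\mathcal T^2\}\leq e^{\HH(\tau_{m,k})n+O(\sqrt n)}$. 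Choosing $P$ to be slightly sub-exponential in $n$ (say $P=e^{\sqrt n}$) then makes $\E[\#\text{bad pairs}]\leq\tfrac12\E\sabs{\mathcal T'}$, producing an isolated collection of size $\geq\Gamma_{m,k}^{n-O(\sqrt n)}$ in expectation; fixing a deterministic realization attaining this bound produces the required collection $(x_{1,j},\dots,x_{k,j})_{j=1}^L$.

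The main obstacle is clearly the bad-pair bound, i.e.\ Proposition~\ref{prop-xprimecount}. For $k=3$ it reduces to a short counting argument as in \cite{KSS17}, but for $k>3$ a pair of $k$-tuples sharing only a single coordinate has enough remaining degrees of freedom that naive entropy counting is far too weak; capturing the correct entropy deficit imposed by the shared-coordinate constraint requires the novel entropy inequality introduced in Section~\ref{sect-xprime-count}.
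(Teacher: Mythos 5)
Your high-level strategy is the same as the paper's (build $X_0$ from the type $\nu$, build $\mathcal{T}$ from the joint type $\tau$, randomize via a $k$-colored sum-free set in $\ZZ_P$, and show isolated candidates are plentiful in expectation), and you correctly identify Proposition~\ref{prop-xprimecount} as the crux. However, the quantitative scaffolding you build around it has errors that are not cosmetic: if you run the argument as written, the bad-pair expectation dominates and the proof does not close.

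The central problem is your reading of Proposition~\ref{prop-xprimecount}. You claim it yields a \emph{total} bound $\#\{\text{bad pairs in }\mathcal{T}^2\}\le e^{\HH(\tau)n+O(\sqrt n)}$. In fact the proposition is a \emph{per-tuple} bound: for each fixed good tuple $(x_1,\dots,x_k)$ of type $\tau$ and each $d\in\{1,\dots,k-2\}$, the number of conflicting partners with $\dim\spn_\QQ(x_i-x_i')=d$ is at most $n^{O(1)}\exp\!\bigl(\tfrac{\HH(\tau)-\HH(\nu)}{k-2}\,dn\bigr)$. Summing over all $\sim e^{\HH(\tau)n}$ good tuples and taking the dominant $d=k-2$ gives a total of roughly $e^{(2\HH(\tau)-\HH(\nu))n}$ bad pairs — larger than your claimed $e^{\HH(\tau)n}$ by the exponential factor $e^{(\HH(\tau)-\HH(\nu))n}$, and $\HH(\tau)>\HH(\nu)$ is strict because $\tau$ is supported on all of $T_{m-1,k}$. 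This is exactly why the paper cannot afford $P=e^{\sqrt n}$: the chain of inequalities only closes if $P$ is taken \emph{exponential} in $n$, around $\exp\!\bigl(\tfrac{\HH(\tau)-\HH(\nu)}{k-2}\,n\bigr)$ (see equation~(\ref{eq-def-P})), so that each factor $1/P^{d}$ beats the $\exp\!\bigl(\tfrac{\HH(\tau)-\HH(\nu)}{k-2}\,dn\bigr)$ growth coming from the partner count. With your choice of $P$, $\E[\#\text{bad pairs}]$ exceeds $\E|\mathcal{T}'|$ by an exponential factor and no deletion argument rescues the construction.

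A related imprecision: your probability computation $(N/P)^{2k-1}$ for a bad pair presupposes that $\varphi_1,\dots,\varphi_k$ are fully independent affine maps, but full independence is incompatible with the ``mild compatibility condition'' you need to exploit the sum-free structure of the colors. The paper instead uses one uniformly random \emph{linear} map $f:\FPnk\to\FP$ composed with explicit affine embeddings $g_1,\dots,g_k:X_0\to\FPnk$; with that coupling, the probability of a bad pair is $R/P^{k-1+d}$ (Lemma~\ref{lemma-linearalgebra} plus Claim~\ref{claim-independence}), which is the bound that actually matches the denominator structure needed above. Your version would require the compatibility condition to be made explicit, and the resulting partial dependence would change the exponent. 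Finally, a ``round to denominator $n$, lose $O(n^{-1/2})$ entropy'' step is not enough for $k>3$: the proof of Proposition~\ref{prop-xprimecount} (via Lemma~\ref{lemma-ingredient}) uses the last condition of Proposition~\ref{proprounding}, namely that the rounded $\tau$ is within $O((\log n)/n)$ of maximizing entropy among all distributions on $T_{m-1,k}$ whose coordinate projections all equal $\nu$. Without ensuring this property of your $\wt\tau$, the partner count in Proposition~\ref{prop-xprimecount} cannot be controlled, which the paper explicitly notes is where $k>3$ differs from $k=3$.
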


Let us now prove that Proposition \ref{proplowerboundinZ} implies Theorem \ref{thmmain}. Afterwards, we will prove Proposition \ref{proplowerboundinZ}.

\begin{proof}[Proof of Theorem \ref{thmmain} assuming Proposition \ref{proplowerboundinZ}]
Let $m\geq 2$ and $k\geq 3$ be fixed. Assume for now that $n$ is divisible by $k$.  Let us consider a collection of $k$-tuples $(x_{1,j}, x_{2,j}, \dots, x_{k,j})_{j=1}^L$ of elements of $\Zn$ as in Proposition \ref{proplowerboundinZ} (in particular, $L\geq (\Gamma_{m,k})^{n-O(\sqrt{n})}$). Using this collection, we will construct a $k$-colored sum-free set $(y_{1,j}, y_{2,j}, \dots, y_{k,j})_{j=1}^L$ in $\Zmn$. For $1\leq i\leq k-1$, and for any $j$, let $y_{i,j}$ be the projection of $x_{i,j}\in \Zn$ to $\Zmn$. For $i=k$, let $y_{k,j}$ be the projection of $x_{k,j}-(m-1)\cdot \ones$ to $\Zmn$.

Let us now check that $(y_{1,j}, y_{2,j}, \dots, y_{k,j})_{j=1}^L$ is indeed a $k$-colored sum-free set in $\Zmn$. Let $j_1,\dots,j_k\in \lbrace 1,\dots, L\rbrace$ be such that $y_{1,j_1}+y_{2,j_2}+\dots +y_{k,j_k}=0$ in $\Zmn$. By the choice of the $y_{i,j}$, each of the $n$ coordinates of the vector
\[x_{1,j_1}+x_{2,j_2}+\dots +x_{k-1,j_{k-1}}+x_{k,j_k}-(m-1)\cdot\ones\]
is divisible by $m$. Thus, every coordinate of $x_{1,j_1}+x_{2,j_2}+\dots +x_{k,j_k}\in \Zn$ has remainder $m-1$ upon division by $m$. Since each coordinate is non-negative, this implies in particular that each coordinate of $x_{1,j_1}+x_{2,j_2}+\dots +x_{k,j_k}$ is at least $m-1$. On the other hand, the sum of the coordinates of each $x_{i,j_i}$ equals $(m-1)n/k$, so the sum of the coordinates of $x_{1,j_1}+x_{2,j_2}+\dots +x_{k,j_k}$ equals $(m-1)n$. Since each is at least $m-1$, this implies that all the coordinates of $x_{1,j_1}+x_{2,j_2}+\dots +x_{k,j_k}$ are equal to $m-1$. Hence
\[x_{1,j_1}+x_{2,j_2}+\dots +x_{k,j_k}=(m-1)\cdot\ones.\]
So by the first property listed in Proposition \ref{proplowerboundinZ} we must have $j_1=j_2=...=j_k$.

For the converse, assume that $j_1=j_2=...=j_k$. Then 
\[x_{1,j_1}+x_{2,j_2}+\dots +x_{k-1,j_{k-1}}+x_{k,j_k}-(m-1)\cdot\ones=0\]
in $\Zn$, hence $y_{1,j_1}+y_{2,j_2}+\dots +y_{k,j_k}=0$ in $\Zmn$ as well.

Thus, we have constructed a $k$-colored sum-free set in $\Zmn$ of size 
\[L\geq (\Gamma_{m,k})^{n-O(\sqrt{n})},\]
if $n$ is divisible by $k$. Note that by embedding $\Zmn$ into $\mathbb{Z}_m^{n'}$ for $n'>n$, any $k$-colored sum-free set in $\Zmn$ gives rise to a $k$-colored sum-free set in $\mathbb{Z}_m^{n'}$ of the same size. Thus, for all $n$ we obtain a $k$-colored sum-free set in $\Zmn$ of size at least $(\Gamma_{m,k})^{n-O(\sqrt{n})}$.\end{proof}

The rest of this section will be devoted to proving Proposition \ref{proplowerboundinZ}. However, in some sense this section covers only the general steps for the proof, the major difficulty lies in the proofs of several lemmas and propositions that are postponed to the next two sections.

\begin{proof}[Proof of Proposition \ref{proplowerboundinZ}] Let $m\geq 2$ and $k\geq 3$ be fixed and let $n$ be divisible by $k$ and sufficiently large (in terms of $m$ and $k$). Our goal is to find a sufficiently large collection of $k$-tuples with the properties listed in Proposition \ref{proplowerboundinZ}.
Recall that in the introduction we defined a specific probability distribution $\nu_{m,k}$ on $\lbrace 0,\dots, m-1\rbrace$.

\begin{lemma}\label{lemmanupk}The probability distribution $\nu_{m,k}$ has expectation $\E(\nu_{m,k})=(m-1)/k$, and its entropy is $\HH(\nu_{m,k})=\log \Gamma_{m,k}$.
\end{lemma}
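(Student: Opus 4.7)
The plan is to extract both identities from the fact that $\gamma_{m,k}\in(0,1)$ is a critical point of the function
\[f(\gamma)=\frac{1+\gamma+\cdots+\gamma^{m-1}}{\gamma^{(m-1)/k}},\]
combined with a direct computation of entropy from the definition. Throughout, I would write $S=1+\gamma_{m,k}+\cdots+\gamma_{m,k}^{m-1}$, so that $\nu_{m,k}(i)=\gamma_{m,k}^i/S$ and $\Gamma_{m,k}=S/\gamma_{m,k}^{(m-1)/k}$.

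For the expectation, the first step is to differentiate $\log f(\gamma)=\log(1+\gamma+\cdots+\gamma^{m-1})-\frac{m-1}{k}\log\gamma$ and use that this derivative vanishes at $\gamma=\gamma_{m,k}$; this is legitimate because the introduction already notes that the minimum defining $\Gamma_{m,k}$ is attained at an interior point of $(0,1)$. After multiplying through by $\gamma_{m,k}$, the critical-point equation becomes
\[\frac{\gamma_{m,k}+2\gamma_{m,k}^2+\cdots+(m-1)\gamma_{m,k}^{m-1}}{S}=\frac{m-1}{k}.\]
The left-hand side is exactly $\sum_{i=0}^{m-1} i\cdot\nu_{m,k}(i)=\E(\nu_{m,k})$, so the first claim follows immediately.

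For the entropy, I would just expand the definition using the explicit formula for $\nu_{m,k}(i)$:
\[\HH(\nu_{m,k})=-\sum_{i=0}^{m-1}\nu_{m,k}(i)\bigl(i\log\gamma_{m,k}-\log S\bigr)=-\E(\nu_{m,k})\log\gamma_{m,k}+\log S.\]
Substituting $\E(\nu_{m,k})=(m-1)/k$ from the previous step and recognizing that $\log S-\frac{m-1}{k}\log\gamma_{m,k}=\log\Gamma_{m,k}$ completes the proof.

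There is no real obstacle here; the only input beyond one-variable calculus is the interior-minimum property for $\gamma_{m,k}$, which is already noted in the excerpt. In effect, the lemma simply repackages the first-order optimality condition for $\gamma_{m,k}$ as a statement about the moments and entropy of the corresponding geometric-type distribution.
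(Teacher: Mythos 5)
Your proof is correct and follows essentially the same route as the paper: use the first-order optimality condition at $\gamma_{m,k}$ to obtain $\E(\nu_{m,k})=(m-1)/k$, then expand the entropy formula and substitute. The only cosmetic difference is that you differentiate $\log f(\gamma)$ rather than $f(\gamma)$ itself, which yields the same critical-point identity slightly more cleanly.
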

We will prove Lemma \ref{lemmanupk} in Section \ref{sect-lemmas1}. In fact, one can also prove that $\nu_{m,k}$ has the highest entropy among all probability distributions on $\lbrace 0,\dots, m-1\rbrace$ with expectation $(m-1)/k$. Although we will not need this fact for our argument, it still provides motivation as to why the distribution $\nu_{m,k}$ is relevant.

In order to use $\nu_{m,k}$ for constructing the desired collection of $k$-tuples in $\Zn$, we first need to round the probabilities in $\nu_{m,k}$ to rational numbers with denominator $n$. The following proposition basically states that a suitable rounding of $\nu_{m,k}$ exists.

\begin{proposition}\label{proprounding} Let $m\geq 2$, $k\geq 3$ and let  $n$ be divisible by $k$ and sufficiently large  (in terms of $m$ and $k$). Then there exist probability distributions $\nu$ on $\lbrace 0,\dots, m-1\rbrace$ and $\tau$ on $T_{m-1,k}$ with the following conditions:
\begin{itemize}
\item Each probability $\nu(i)$ for $i\in \lbrace 0,\dots, m-1\rbrace$ is an integer multiple of $1/n$.
\item $\nu$ has expectation $\E(\nu)=(m-1)/k$.
\item $\HH(\nu)\geq \log\Gamma_{m,k}-C_{m,k}/n$.
\item Each probability $\tau(t)$ for $t\in T_{m-1,k}$ is an integer multiple of $1/n$.
\item $\tau$ is $S_k$-symmetric and has marginal $\nu$.
\item We have $\HH(\tau')\leq \HH(\tau) + (D_{m,k}\log n)/n$ for every probability distribution $\tau'$ on $T_{m-1,k}$ with the property that all of the $k$ coordinate projections of $\tau'$ are equal to $\nu$.
\end{itemize}
Here $C_{m,k}>0$ and $D_{m,k}>0$ are constants only depending on $m$ and $k$.
\end{proposition}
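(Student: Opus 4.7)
The plan is to define $\tau$ as a careful rounding of the maximum-entropy distribution on $T_{m-1,k}$ with all coordinate marginals $\nu_{m,k}$, and to let $\nu$ be its marginal. Specifically, let $\tau^*$ be the probability distribution on $T_{m-1,k}$ of maximum entropy subject to having all $k$ coordinate projections equal to $\nu_{m,k}$. By Theorem \ref{theo-marginal} the feasible polytope is non-empty, so $\tau^*$ exists, and it is unique by strict concavity of entropy; by symmetry of the constraints it is $S_k$-symmetric; and since the gradient of $-x\log x$ diverges at $x=0$, $\tau^*$ must lie in the interior of the simplex, i.e., $\tau^*(t) \geq c_* = c_*(m,k) > 0$ for all $t \in T_{m-1,k}$.

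Next I would round $\tau^*$ to an $S_k$-symmetric $\tau$ on the $1/n$ grid. Since $\tau^*$ takes a constant value $\alpha_O > 0$ on each $S_k$-orbit $O$ of $T_{m-1,k}$, it suffices to choose positive integers $c_O$ with $\sum_O |O|\, c_O = n$ and $c_O = n\alpha_O + O_{m,k}(1)$, and then set $\tau(t) = c_O/n$ for $t \in O$. Starting from $c_O^0 = \lfloor n\alpha_O\rfloor$ gives a defect $d = n - \sum_O |O| c_O^0$ with $|d| \leq |T_{m-1,k}|$, and since $\gcd\{|O|\}$ divides $k$ (witnessed by the orbit of $(m-1,0,\dots,0)$, which has size $k$) and $k$ divides $n$, this gcd also divides $d$; hence $d$ can be written as $\sum_O |O|\,\delta_O$ with each $|\delta_O| = O_{m,k}(1)$. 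Setting $c_O = c_O^0 + \delta_O$ gives the desired integers, and these are positive for $n$ large because $n\alpha_O \geq n c_*$. By construction $\tau$ is $S_k$-symmetric, integer-valued over $1/n$, and $\|\tau - \tau^*\|_1 = O_{m,k}(1/n)$. Taking $\nu$ to be the marginal of $\tau$ yields integer multiples of $1/n$; the mean equals $(m-1)/k$ by $S_k$-symmetry and $a_1+\cdots+a_k=m-1$; and $\|\nu-\nu_{m,k}\|_1 \leq \|\tau-\tau^*\|_1 = O(1/n)$, so by Lemma \ref{lemmaentropiesclose} and Lemma \ref{lemmanupk} we obtain $\HH(\nu) \geq \log\Gamma_{m,k} - C_{m,k}/n$.

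For the sixth condition, take any $\tau'$ on $T_{m-1,k}$ whose $k$ coordinate projections are all $\nu$, and let $\tau^*_\nu$ denote the maximum-entropy element of this feasible polytope, so $\HH(\tau') \leq \HH(\tau^*_\nu)$. The key analytic fact is that $\tau^*_\nu$ depends Lipschitz-continuously on $\nu$ in a neighborhood of $\nu_{m,k}$: by standard convex duality, $\tau^*_\nu(a) \propto \prod_{i=1}^k g(a_i)$ on $T_{m-1,k}$ for some positive function $g$, and the implicit-function theorem applied to the map $g \mapsto \nu$ (whose Jacobian at the solution for $\nu_{m,k}$ is nondegenerate thanks to strict concavity of entropy at the interior maximizer $\tau^*$) gives $\|\tau^*_\nu - \tau^*\|_1 = O(\|\nu-\nu_{m,k}\|_1) = O(1/n)$. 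Combined with the rounding estimate, $\|\tau^*_\nu - \tau\|_1 = O(1/n)$, and Lemma \ref{lemmaentropiesclose} then yields $\HH(\tau^*_\nu) \leq \HH(\tau) + O_{m,k}(1/n) \leq \HH(\tau) + (D_{m,k}\log n)/n$, as required.

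The main obstacle is the orbit-rounding step above: one must simultaneously preserve $S_k$-symmetry, total mass one, closeness to $\tau^*$, and the $1/n$-grid condition. The enabling observation is the divisibility identity $\gcd\{|O|: O \text{ an } S_k\text{-orbit of } T_{m-1,k}\}\mid k$, so that the hypothesis $k\mid n$ is exactly what is needed to solve $\sum_O |O| c_O = n$ in integers near $(n\alpha_O)_O$; without this, the marginals could fail to land on the $1/n$ grid. A secondary technicality is the Lipschitz dependence of $\tau^*_\nu$ on $\nu$ required for the sixth condition; this follows from standard convex-duality arguments provided one verifies that the Jacobian nondegeneracy is uniform in a small neighborhood of $\nu_{m,k}$, which holds because $\tau^*_{\nu_{m,k}}$ is strictly positive.
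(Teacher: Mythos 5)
Your proposal takes a genuinely different route from the paper's. You take $\tau$ to be an orbit-wise rounding of the maximum-entropy distribution $\tau^*$ with all coordinate projections equal to $\nu_{m,k}$, and set $\nu$ to be the marginal of $\tau$; the sixth condition is then argued via continuity of the maximum-entropy map $\nu\mapsto\tau^*_\nu$. The paper instead first rounds the (not necessarily maximum-entropy) $\tau_{m,k}$ from Theorem \ref{theo-marginal} to get $\tilde\tau$, sets $\nu=\mu(\tilde\tau)$, and then builds $\tau$ by symmetrizing the max-entropy distribution $\tau_0$ with coordinate projections $\nu$, mixing it with $\tilde\tau$ to create slack, and rounding on the lattice $U$ of Lemma \ref{lemma-lattice-U}; the sixth condition there comes from a direct chain of inequalities with no stability argument. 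Your orbit-wise rounding is correct: the $\gcd$ observation via the size-$k$ orbit of $(m-1,0,\dots,0)$ together with $k\mid n$ is exactly the right way to close the mass balance, and the first five conditions follow as you say.

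The genuine gap is in the Lipschitz-dependence step for the sixth condition. You assert that the Jacobian of $g\mapsto\nu$ at $\nu_{m,k}$ is nondegenerate; as stated this is false, since the parametrization $\tau^*_\nu(a)\propto\prod_i g(a_i)$ has at least a two-dimensional kernel: replacing $g(a)$ by $c_1\,c_2^{a}\,g(a)$ leaves $\tau^*_\nu$ unchanged, because the first scaling is absorbed by normalization and the second contributes only the constant factor $c_2^{m-1}$ since $a_1+\dots+a_k=m-1$ on $T_{m-1,k}$. After fixing a gauge one still has to argue nondegeneracy of the reduced $(m-2)\times(m-2)$ Jacobian, which is a real step you have not supplied. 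A more elementary patch, closer in spirit to the paper's mixing step, avoids the implicit function theorem entirely: choose $\eps$ of order $1/n$ with a large enough constant so that $\hat\nu:=(\nu_{m,k}-(1-\eps)\nu)/\eps$ is a probability distribution with mean $(m-1)/k$ lying close to $\nu_{m,k}$, hence realizable as the common coordinate marginal of some $\rho$ on $T_{m-1,k}$ (here one uses that $\nu_{m,k}$ is in the relative interior of the achievable-marginals polytope, since $\tau_{m,k}$ is strictly positive). Then $\sigma=(1-\eps)\tau^*_\nu+\eps\rho$ has all coordinate marginals $\nu_{m,k}$, so by concavity $\HH(\tau^*)\geq\HH(\sigma)\geq(1-\eps)\HH(\tau^*_\nu)$; combined with $|\HH(\tau)-\HH(\tau^*)|=O_{m,k}(1/n)$ from Lemma \ref{lemmaentropiesclose}, this gives the sixth condition with an $O(1/n)$ bound.
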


The key ingredients for the proof of Proposition \ref{proprounding} are Theorem \ref{theo-marginal} and Lemma \ref{lemmanupk}. Starting from there, one needs to perform several rounding steps in order to obtain Proposition \ref{proprounding}. We will postpone the technical details of these rounding steps to Section \ref{sect-lemmas1}. The proof of Theorem \ref{theo-marginal} will be given in the second part of this paper, starting from Section \ref{sect-marginal}.

Let us fix $\nu$ and $\tau$ as in Proposition \ref{proprounding}. Now, let $X_0 \subseteq \lbrace 0,\dots,m-1\rbrace^{n}$ consist of those vectors ${x \in \lbrace 0,\dots,m-1\rbrace^{n}}$ that have exactly $\nu(i)n$ coordinates equal to $i$ for every $i=0,\dots,m-1$ (i.e. that contain exactly $\nu(0)n$ zeros, exactly $\nu(1)n$ ones and so on). Then for any $x\in X_0$, choosing one of the coordinates of $x$ uniformly at random recovers the probability distribution $\nu$ on $\lbrace 0,\dots,m-1\rbrace$.

Note that for each $x\in X_0$ all of its $n$ coordinates are in the set $\lbrace 0,\dots,m-1\rbrace$ and their sum is equal to $0\cdot \nu(0)n+\dots+(m-1)\cdot \nu(m-1)n=\E(\nu)n=n(m-1)/k$. So if we choose our desired collection of $k$-tuples in such a way that $x_{i,j}\in X_0$ for all $i$ and $j$, then the second and third condition in Proposition \ref{proplowerboundinZ} will automatically be satisfied.

In order to obtain the desired collection of $k$-tuples, we will use a probabilistic sampling argument. To describe this sampling, let us first (using Bertrand's Postulate) choose a prime number $P$ with
\begin{equation}\label{eq-def-P}
4n^{D_{m,k}+2m^k}\exp\left(\frac{\HH(\tau)-\HH(\nu)}{k-2}\cdot n\right) \le P\le n^{D_{m,k}+3m^k}\exp\left(\frac{\HH(\tau)-\HH(\nu)}{k-2}\cdot n\right).
\end{equation}
By (\ref{eq-entropyprojection}), we have $\HH(\tau)\geq \HH(\mu(\tau))=\HH(\nu)$. Hence, as long as $n$ is large enough, we have
\[P\geq n^{m^k}\geq (2k)!m^{2k}.\]
On the other hand $\HH(\tau)\leq \log (\vert T_{m-1,k}\vert)\leq \log (m^k)=k\log m$. Therefore, as long as $n$ is sufficiently large, we have $P\leq \exp(2k(\log m)n)$, so
\begin{equation}\label{eq-P-upper-bound}
\log P\leq 2k(\log m)n.
\end{equation}

We now define functions $\tilde{g}_1, \dots, \tilde{g}_k: X_0\to\ZZ^{n+k-1}$. For any $x\in X_0$, consider its coordinates $x=(x^{(1)},x^{(2)},...,x^{(n)})$. For $1\le i\le k-1$, we define $\tilde{g}_i(x)=(\tilde{g}_i(x)^{(1)},\tilde{g}_i(x)^{(2)},\dots, \tilde{g}_i(x)^{(n+k-1)}) \in \ZZ^{n+k-1}$ by
\[\tilde{g}_i(x)^{(j)}=\begin{cases}
x^{(j)} & \text{if } j \le n\\
1 & \text{if } j=n+i\\
0 & \text{if } j>n \text{ and }j \ne n+i.
\end{cases}\]
In other words, for $1\le i\le k-1$, the vector $\tilde{g}_i(x)$ can be obtained from $x$ by attaching the $i$-th standard basis vector in $\ZZ^{k-1}$ at the end.

For $i=k$, we define $\tilde{g}_k(x)=(\tilde{g}_k(x)^{(1)},\tilde{g}_k(x)^{(2)},\dots, \tilde{g}_k(x)^{(n+k-1)}) \in \ZZ^{n+k-1}$ by
\[\tilde{g}_k(x)^{(j)}=\begin{cases}
x^{(j)}-(m-1) & \text{if } j \le n\\
-1 & \text{if } j>n.
\end{cases}
\]

Note that for each $1\leq i\leq k$ and each $x\in X_0 \su \lbrace 0,\dots, m-1\rbrace^{n}$, all the coordinates of $\tilde{g}_i(x)$ have absolute value at most $m-1$. 
Let us also define functions $g_1, \dots, g_k: X_0\to \FPnk$ by taking $g_i(x)$ to be the projection of $\tilde{g}_i(x)$ to $\FPnk$ for every $1\leq i\leq k$ and every $x\in X_0$.
Given $x_1,\dots,x_k \in X_0$ with $x_1+\dots+x_k=(m-1)\cdot \ones$ in $\ZZ^n$ , we have $\tilde{g}_1(x_1)+\dots+\tilde{g}_k(x_k)=0$ in $\ZZ^{n+k-1}$ and therefore $g_1(x_1)+\dots+g_k(x_k)=0$ in $\FPnk$. However, $g_1(x_1),\dots, g_{k-1}(x_{k-1})$ are linearly independent over $\FP$ (because their last $k-1$ coordinates form the standard basis vectors in $\FPkmin$).

For our probabilistic sampling argument, we will use a large $k$-colored sum-free set in $\FP$. The following lemma ensures the existence of such a large $k$-colored sum-free set. The proof relies on a lemma by Alon {\cite[Lemma 3.1]{Alon01}}, which he proved using a modification of Behrend's construction \cite{Behrend46}.
\begin{lemma} \label{lemmaBehrendZM}
There exists a $k$-colored sum-free set $(y_{1,j}, y_{2,j}, \dots, y_{k,j})_{j=1}^R$ in $\FP$ of size
\begin{equation}\label{eq-R-large}
R \ge P \cdot \exp(-12\sqrt{\log P \log k}).
\end{equation}
\end{lemma}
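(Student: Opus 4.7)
The plan is to leverage Alon's Behrend-type construction (Lemma 3.1 of \cite{Alon01}) to produce a set $B \subseteq \FP$ of large density which contains no nontrivial solution to the equation $b_1 + b_2 + \dots + b_{k-1} = (k-1) b_k$, and then to turn such a $B$ into a $k$-colored sum-free set in an essentially one-line way. Concretely, Alon's lemma yields a subset $B$ of $\{0,1,\dots,\lfloor P/k\rfloor\}$ of size at least $\lfloor P/k\rfloor \cdot \exp(-c\sqrt{\log P\log k})$ such that whenever $b_1,\dots,b_k\in B$ satisfy $b_1+\dots+b_{k-1}=(k-1)b_k$ (as an equation in $\ZZ$), one has $b_1=\dots=b_k$. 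Because all elements of $B$ are at most $\lfloor P/k\rfloor$, both sides of that equation are strictly smaller than $P$, so the absence of nontrivial integer solutions automatically transfers to the absence of nontrivial solutions modulo $P$. Thus $B$ sits inside $\FP$ and the defining property holds there as well.

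Given such a $B=\{b_1,\dots,b_R\}$, I define the candidate $k$-colored sum-free set by taking $y_{i,j}=b_j$ for $1\le i\le k-1$ and $y_{k,j}=-(k-1)b_j$, all viewed in $\FP$. For any choice of indices $j_1,\dots,j_k\in\{1,\dots,R\}$, the sum $y_{1,j_1}+\dots+y_{k,j_k}$ equals $b_{j_1}+\dots+b_{j_{k-1}}-(k-1)b_{j_k}$ in $\FP$, so the sum-free condition becomes exactly the statement that $b_{j_1}+\dots+b_{j_{k-1}}=(k-1)b_{j_k}$ in $\FP$. By the defining property of $B$, this forces $b_{j_1}=\dots=b_{j_k}$, and since the elements of $B$ are distinct, this in turn forces $j_1=\dots=j_k$. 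Conversely, when all indices agree, the sum is trivially zero, so $(y_{1,j},\dots,y_{k,j})_{j=1}^R$ is a genuine $k$-colored sum-free set.

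The remaining task is to verify the size bound \eqref{eq-R-large}. Alon's lemma gives $R=|B|\ge \lfloor P/k\rfloor\cdot \exp(-c\sqrt{\log P\log k})$ for some absolute constant $c$. Since $P$ is chosen large (by \eqref{eq-def-P}, in particular $P\ge n^{m^k}$ for $n$ large), the factors of $k$ and the difference between $\log P$ and $\log\lfloor P/k\rfloor$ can be absorbed into the exponent: $\lfloor P/k\rfloor\ge P\cdot\exp(-2\log k)$ and $\sqrt{\log P\log k}$ dominates $\log k$ when $P$ is sufficiently large. After consolidating constants one obtains $R\ge P\cdot \exp(-12\sqrt{\log P\log k})$, establishing the lemma. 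The only real step of substance here is invoking Alon's lemma correctly; the rest of the argument is a short algebraic check plus this elementary size bookkeeping, and there is no conceptual obstacle beyond that.
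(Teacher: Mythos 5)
Your proposal is correct and follows essentially the same approach as the paper: invoke Alon's Behrend-type lemma to obtain a set with no nontrivial integer solutions to $b_1+\dots+b_{k-1}=(k-1)b_k$, observe that boundedness below $P$ lets this transfer to $\FP$, form the $k$-tuples $(b,\dots,b,-(k-1)b)$, and then do the same elementary absorption of the factor $k$ into the exponent. The only superficial difference is that you quote Alon's interval as $\{0,\dots,\lfloor P/k\rfloor\}$ where the paper uses $\{1,\dots,\lfloor P/k\rfloor\}$ and you write $\log P$ in place of $\log\lfloor P/k\rfloor$ inside the square root, but these are immaterial to the final bound.
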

\begin{proof} Recall that $P\geq k\geq 3$. By {\cite[Lemma 3.1]{Alon01}}, there exists a subset $Y\su \lbrace 1,\dots,\lfloor P/k\rfloor\rbrace$ of size at least
\[\vert Y\vert\geq \frac{\lfloor P/k\rfloor}{e^{10\sqrt{\log (\lfloor P/k\rfloor)\log (k-1)}}}\geq \frac{P}{2k}\cdot \exp(-10\sqrt{\log P \log k})\geq P \cdot \exp(-12\sqrt{\log P \log k})\]
such that the only solutions in $Y$ to the integer equation $y_1+\dots+y_{k-1}=(k-1)y_k$ satisfy $y_1=\dots=y_{k-1}=y_k$. Note that for $y_1,\dots,y_k\in Y\su \lbrace 1,\dots,\lfloor P/k\rfloor\rbrace$, both sides of the equation $y_1+\dots+y_{k-1}=(k-1)y_k$ are integers between $1$ and $P-1$. Hence the equation $y_1+\dots+y_{k-1}=(k-1)y_k$ holds over $\FP$ if and only if it holds in the integers. So let us interpret $Y$ as a subset of $\FP$. Then still the only solutions in $Y$ to the equation $y_1+\dots+y_{k-1}=(k-1)y_k$ over $\FP$ are $y_1=\dots=y_{k-1}=y_k$

Now, for each $y\in Y\su \FP$ consider the $k$-tuple $(y,\dots,y,-(k-1)y)\in \FPk$. Altogether these $k$-tuples form a $k$-colored sum-free set in $\FP$ of size $\vert Y\vert\geq P \cdot \exp(-12\sqrt{\log P \log k})$.\end{proof}

Let $(y_{1,j}, y_{2,j}, \dots, y_{k,j})_{j=1}^R$ be a $k$-colored sum-free set in $\FP$ as in the lemma. For $i=1,\dots, k$ set
\[Y_i=\lbrace y_{i,j} \mid 1\leq j\leq R\rbrace.\]
Since for a fixed $i$, the vectors $y_{i,j}$ for $1\leq j\leq R$ are all distinct, we have $\vert Y_i\vert =R$. Furthermore, by the definition of $k$-colored sum-free set, any $k$-tuple in $Y_1\times\dots\times Y_k$ summing to zero needs to be of the form $(y_{1,j}, y_{2,j}, \dots, y_{k,j})$ for some $j$.

In order to perform the desired sampling, let us now choose a random linear map $f:\FPnk \rightarrow \FP$. More precisely, we choose $f$ uniformly at random among all linear maps $\FPnk \rightarrow \FP$.
\begin{claim}\label{claim-independence}
Suppose that $v_1,v_2,\dots,v_\l \in \FPnk$ are linearly independent over $\FP$. Then the images $f(v_1),\dots,f(v_\l)$ are probabilistically independent and uniformly distributed over $\FP$.
\end{claim}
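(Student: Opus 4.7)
The plan is to reduce the claim to the standard linear algebra fact that evaluation at a linearly independent tuple is a surjective linear map, and then exploit the invariance of the uniform distribution under coordinate reshuffling in a chosen basis.

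More concretely, I would first extend the linearly independent list $v_1,\dots,v_\ell \in \FPnk$ to a basis $v_1,\dots,v_\ell,v_{\ell+1},\dots,v_{n+k-1}$ of $\FPnk$ over $\FP$. Any linear map $f\colon \FPnk \to \FP$ is determined uniquely by its values $f(v_1),\dots,f(v_{n+k-1})$, and conversely every assignment of these values yields a well-defined linear map. Thus the evaluation map
\[
\Phi\colon \mathrm{Hom}_{\FP}(\FPnk,\FP) \longrightarrow \FP^{\,n+k-1}, \qquad f \longmapsto \bigl(f(v_1),\dots,f(v_{n+k-1})\bigr)
\]
is a bijection. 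Since $f$ is chosen uniformly among all linear maps $\FPnk\to\FP$, its image $\Phi(f)$ is uniformly distributed on $\FP^{\,n+k-1}$, hence the coordinates $f(v_1),\dots,f(v_{n+k-1})$ are independent and uniform over $\FP$.

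In particular, restricting to the first $\ell$ coordinates, the tuple $(f(v_1),\dots,f(v_\ell))$ is independent and uniform over $\FP$, which is exactly the conclusion of the claim. The only nontrivial ingredient is the possibility of extending $v_1,\dots,v_\ell$ to a basis of $\FPnk$, which is guaranteed by the hypothesis of linear independence; there is no real obstacle here, and I do not expect any genuine difficulty in carrying out this argument.
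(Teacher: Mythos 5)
Your proposal is correct and follows essentially the same approach as the paper: extend $v_1,\dots,v_\ell$ to a basis, observe that a uniformly random linear map corresponds to choosing all basis images independently and uniformly, and restrict to the first $\ell$ coordinates. The paper states this more informally (modeling the random choice of $f$ directly), while you make the evaluation bijection $\Phi$ explicit, but the argument is the same.
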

\begin{proof}
We can extend $v_1,v_2,\dots,v_\l$ to a basis $v_1,v_2,\dots,v_{n+k-1}$ of  $\FPnk$. Note that we can model the random choice of $f$ by mapping each $v_i$ to a random element of $\FP$ (uniformly, and independently for all $1\leq i\leq n+k-1$). By this description of the random experiment, the claim is clearly true.\end{proof}

We now define, for $1\leq i\leq k$,
\[X_i=\{x \in X_0 \mid f(g_i(x)) \in Y_i\}.\]

\begin{definition}\label{deficandidate}A \emph{candidate $k$-tuple} is a $k$-tuple $(x_1,x_2,\dots,x_k) \in X_1 \times X_2 \times \dots \times X_k$ with $x_1+x_2+\dots+ x_k=(m-1)\cdot \ones$. A candidate $k$-tuple is \emph{isolated}, if there is no other candidate $k$-tuple $(x_1',x_2',\dots,x_k')$ with $x_i'=x_i$ for some $1\leq i\leq k$.
\end{definition}

Let $(x_{1,j}, x_{2,j}, \dots, x_{k,j})_{j=1}^L$ be the collection of all isolated candidate $k$-tuples, where $L$ is the total number of isolated candidate $k$-tuples. Note that since the sets $X_i$ depend on the choice of the random map $f$, the notions in Definition \ref{deficandidate} also depend on this choice. In particular, $L$ is a random variable that depends on the random map $f$.
We claim that the collection $(x_{1,j}, x_{2,j}, \dots, x_{k,j})_{j=1}^L$ will always satisfy the first condition in Proposition \ref{proplowerboundinZ}.  For each $j\in \lbrace 1,\dots, L\rbrace$, we have $x_{1,j}+\dots+ x_{k,j}=(m-1)\cdot \ones$ by the first part of Definition \ref{deficandidate}. Also note that $x_{i,j}\neq x_{i,j'}$ whenever $j\neq j'$, since all the $(x_{1,j}, x_{2,j}, \dots, x_{k,j})$ are isolated. Now suppose that we have $j_1,\dots, j_k\in \lbrace 1,\dots, L\rbrace$ with $x_{1,j_1}+x_{2,j_2}+\dots +x_{k,j_k}=(m-1)\cdot \ones$. As $x_{i,j_i}\in X_i$ for each $1\leq i\leq k$, we obtain that $(x_{1,j_1}, x_{2,j_2},\dots ,x_{k,j_k})$ is a candidate $k$-tuple. But since all the candidate $k$-tuples $(x_{1,j}, x_{2,j}, \dots, x_{k,j})_{j=1}^L$ in the collection are isolated, $(x_{1,j_1}, x_{2,j_2},\dots ,x_{k,j_k})$ must be one of the $k$-tuples in the collection. Hence $j_1=\dots=j_k$. So the isolated candidate $k$-tuples indeed form a collection satisfying the first condition in Proposition \ref{proplowerboundinZ}.

Each of the vectors $x_{i,j}$ in the $k$-tuples in the collection satisfies $x_{i,j}\in X_i\su X_0$. As we have already seen above, this implies that the $n$ coordinates of $x_{i,j}$ are all in the set $\lbrace 0,\dots,m-1\rbrace$ and their sum is equal to $n(m-1)/k$. Thus, the second and third condition in Proposition \ref{proplowerboundinZ} are satisfied for the collection of isolated candidate $k$-tuples. So it only remains to prove that for at least one choice of the random map $f$, the number $L$ of isolated candidate $k$-tuples is sufficiently large. In particular, it suffices to show that the expected value of $L$ is sufficiently large.

The following proposition states that certain $k$-tuples have a good probability of being isolated candidate $k$-tuples. Note that if $x_1,\dots,x_k\in \lbrace 0,\dots,m-1\rbrace^n$ with $x_1+\dots+ x_k=(m-1)\cdot \ones$, then for every $1\leq j\leq n$, the $j$-th coordinates of these vectors satisfy $x_1^{(j)}+\dots+ x_k^{(j)}=m-1$. Hence $(x_1^{(j)},\dots,x_k^{(j)})\in T_{m-1,k}$ for every $1\leq j\leq n$.
\begin{proposition}\label{prop-isolated}
Let $x_1,\dots,x_k\in \lbrace 0,\dots,m-1\rbrace^n$ with $x_1+\dots+ x_k=(m-1)\cdot \ones$. Assume that for every $t\in T_{m-1,k}$ the number of $j\in \lbrace 1,\dots,n\rbrace$ with $(x_1^{(j)},\dots,x_k^{(j)})=t$ is precisely $\tau(t)n$. Then $x_1,\dots,x_k\in X_0$ and furthermore the probability that $(x_1,\dots,x_k)$ is an isolated candidate $k$-tuple is at least
\[\exp\left(\HH(\nu)n-\HH(\tau)n-25km\sqrt{n}\right).\]
\end{proposition}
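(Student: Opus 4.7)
The plan is to compute the probability that $(x_1,\dots,x_k)$ is a candidate $k$-tuple exactly, then subtract an upper bound on the probability that it is non-isolated via a union bound over potential ``partner'' tuples. First, each $x_i$ lies in $X_0$ because the hypothesis together with the $S_k$-symmetry of $\tau$ forces, for each $a \in \{0,\dots,m-1\}$, exactly $\sum_{t\in T_{m-1,k},\,t_i=a}\tau(t)n = \mu(\tau)(a)n = \nu(a)n$ coordinates of $x_i$ to equal $a$. For candidacy, observe that $x_1+\cdots+x_k = (m-1)\ones$ gives $g_1(x_1)+\cdots+g_k(x_k) = 0$ in $\FPnk$, while $g_1(x_1),\dots,g_{k-1}(x_{k-1})$ are $\FP$-linearly independent since their last $k-1$ coordinates form the standard basis of $\FPkmin$. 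By Claim~\ref{claim-independence} the values $f(g_1(x_1)),\dots,f(g_{k-1}(x_{k-1}))$ are uniform and independent on $\FP$, and $f(g_k(x_k))$ equals their negative sum. The candidacy event requires $f(g_i(x_i)) \in Y_i$ for all $i$, and since these values sum to $0$, the sum-free property forces $(f(g_1(x_1)),\dots,f(g_k(x_k))) = (y_{1,J},\dots,y_{k,J})$ for some $J \in \{1,\dots,R\}$. Each $J$ contributes probability $P^{-(k-1)}$, so $\P((x_1,\dots,x_k)\text{ is a candidate}) = R\,P^{-(k-1)}$.

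Next I would bound the non-isolation probability by a union bound over ``partners'' $(x_1',\dots,x_k') \in X_0^k \setminus \{(x_1,\dots,x_k)\}$ satisfying $x_i' = x_i$ for some $i \in \{1,\dots,k\}$ and $\sum_j x_j' = (m-1)\ones$. For any such partner, the shared coordinate forces $f(g_i(x_i')) = f(g_i(x_i)) = y_{i,J}$, so both tuples must correspond to the \emph{same} sum-free index $J$. For a fixed $J$, the joint event is a system of $k-1+r$ independent linear constraints on $f$, where $r = \dim_{\FP}\spn\{x_j - x_j' : j \ne i\}$: this is because the difference vectors $g_j(x_j) - g_j(x_j') = (x_j - x_j',0,\dots,0)$ lie in the subspace $\FP^n \times \{0\}^{k-1}$, whose intersection with $\spn\{g_l(x_l)\}$ is trivial (a direct check on the last $k-1$ coordinates). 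Summing over $J$ gives joint probability $R\,P^{-(k-1+r)}$, so
\[\P(\text{candidate but not isolated}) \le R\,P^{-(k-1)} \cdot \sum_{i=1}^k \sum_{\text{partners at }i} P^{-r}.\]

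The crux---and the main obstacle---is then to show $\sum_i \sum_{\text{partners}} P^{-r} \le \tfrac12$, so that $\P(\text{isolated candidate}) \ge \tfrac12 R\,P^{-(k-1)}$. This is precisely the content of Proposition~\ref{prop-xprimecount}. The naive bound of $|X_0|^{k-2} \approx \exp((k-2)\HH(\nu)n)$ partners at each $i$, combined with $P^{-r} \le P^{-1}$, is far too weak against the choice of $P$ in (\ref{eq-def-P}); one needs the much finer fact that partners whose difference vectors are confined to a low-dimensional subspace are correspondingly rare. This sharper count relies on the new entropy inequality introduced in Section~\ref{sect-xprime-count}, which I would invoke here as a black box.

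Once the partner sum is controlled, the remainder is mechanical. Using $R \ge P\exp(-12\sqrt{\log P \log k})$ from Lemma~\ref{lemmaBehrendZM}, the upper bound on $P$ in (\ref{eq-def-P}), and $\log P \le 2k(\log m)n$ from (\ref{eq-P-upper-bound}), one directly computes
\[\tfrac12 R\,P^{-(k-1)} \ge n^{-O(1)}\exp\bigl(\HH(\nu)n - \HH(\tau)n - O(\sqrt{n})\bigr) \ge \exp\bigl(\HH(\nu)n - \HH(\tau)n - 25km\sqrt{n}\bigr)\]
for $n$ sufficiently large, by absorbing the polynomial prefactor and the $\sqrt{\log P \log k}$ term into the $25km\sqrt{n}$ slack.
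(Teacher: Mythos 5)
Your proposal follows the same overall structure as the paper: show $x_i\in X_0$ from the marginal condition, compute the candidacy probability exactly as $R/P^{k-1}$, bound the joint probability with a partner by a dimension count, and then invoke Proposition~\ref{prop-xprimecount} to control the total contribution of the bad partners. The arithmetic at the end is handled the same way, via Lemma~\ref{lemmaBehrendZM}, the choice of $P$ in~(\ref{eq-def-P}), and~(\ref{eq-P-upper-bound}).

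There is, however, one genuine gap that you should not treat as cosmetic. You define $r=\dim_{\FP}\spn\{x_j-x_j':j\ne i\}$ and bound the joint probability by $R\,P^{-(k-1+r)}$, but Proposition~\ref{prop-xprimecount} counts partners according to $d=\dim_{\QQ}\spn(x_1-x_1',\dots,x_k-x_k')$. Reduction mod $P$ only gives $r\le d$, so your upper bound $P^{-(k-1+r)}$ is \emph{weaker} than $P^{-(k-1+d)}$, and the two classifications need not align. For $k=3$ this would still be harmless (then $d$ can only be $1$, so $r=d=1$), but for $k>3$ the choice of $P$ in~(\ref{eq-def-P}) scales like $\exp\!\bigl(\frac{\HH(\tau)-\HH(\nu)}{k-2}n\bigr)$, and if a partner with $d=k-2$ had $r=1$ your bound would lose a factor of roughly $\exp\!\bigl(\HH(\tau)-\HH(\nu))\bigl(1-\frac{1}{k-2}\bigr)n\bigr)$ and the sum $\sum P^{-r}$ would no longer be small. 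The paper closes exactly this hole with Lemma~\ref{lemma-linearalgebra}: it proves that the $\FP$-span of $g_1(x_1),\dots,g_k(x_k'),\dots$ has dimension $k-1+d$ with $d$ the \emph{rational} dimension, using that any nonzero integer minor is bounded by $(2k)!(m-1)^{2k}<P$ and so stays nonzero mod $P$. You need this step (or an equivalent argument that $r=d$) before you can combine the probability bound with the count from Proposition~\ref{prop-xprimecount}.
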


We postpone the proof of this proposition for a little while, in order to first finish the proof of Proposition \ref{proplowerboundinZ}.

\begin{claim} \label{claimnumbertaudist} The number of $k$-tuples $(x_1,\dots,x_k)$ satisfying the assumptions of Proposition \ref{prop-isolated} is at least $e^{\HH(\tau)n}n^{-2m^k}$.\end{claim}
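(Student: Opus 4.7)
The plan is to set up a bijection between the $k$-tuples $(x_1,\dots,x_k)$ satisfying the hypotheses of Proposition \ref{prop-isolated} and certain sequences of elements of $T_{m-1,k}$, and then apply the counting lower bound in Lemma \ref{lemmaentropycounting}.

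More precisely, given a $k$-tuple $(x_1,\dots,x_k)$ with $x_i \in \{0,\dots,m-1\}^n$ satisfying $x_1+\dots+x_k=(m-1)\cdot\ones$, the coordinate-wise data $t_j := (x_1^{(j)},\dots,x_k^{(j)}) \in T_{m-1,k}$ for $j=1,\dots,n$ determines and is determined by the tuple. The additional hypothesis that each $t \in T_{m-1,k}$ occurs as some $(x_1^{(j)},\dots,x_k^{(j)})$ for exactly $\tau(t)n$ values of $j$ translates exactly to saying that in the sequence $t_1,\dots,t_n$ each element $t \in T_{m-1,k}$ appears precisely $\tau(t)n$ times. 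Conversely, any such sequence yields a valid $k$-tuple (the sum-to-$(m-1)$ condition is automatic coordinate-wise because $t_j \in T_{m-1,k}$, and the entries lie in $\{0,\dots,m-1\}$ by definition of $T_{m-1,k}$). So the count in question equals the number $M$ of sequences $t_1,\dots,t_n$ of elements of $T_{m-1,k}$ with multiplicities prescribed by $\tau$.

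Now I would apply Lemma \ref{lemmaentropycounting} directly to the distribution $\tau$ on the finite set $S = T_{m-1,k}$, using that $\tau(t)$ is an integer multiple of $1/n$ (this is guaranteed by our choice of $\tau$ via Proposition \ref{proprounding}). This yields
\[M \;\ge\; \frac{e^{\HH(\tau)n}}{e^{|T_{m-1,k}|}\, n^{|T_{m-1,k}|}}.\]
Using $|T_{m-1,k}| \le m^k$ and the assumption that $n$ is sufficiently large (which is already in force throughout the proof of Proposition \ref{proplowerboundinZ}), we have $e^{m^k} \le n^{m^k}$, so the right-hand side is at least $e^{\HH(\tau)n}\, n^{-2m^k}$, as desired.

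There is no real obstacle here; the claim is essentially an unpacking of the definition of $\tau$ together with the standard entropy-based lower bound on multinomial coefficients. The only thing to be mindful of is that the hypothesis ``$\tau(t)n$ is an integer for every $t$'' is exactly what makes Lemma \ref{lemmaentropycounting} applicable, and this was built into Proposition \ref{proprounding}.
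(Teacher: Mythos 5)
Your proof is correct and follows essentially the same approach as the paper: encode each $k$-tuple via the coordinate sequence $t_1,\dots,t_n\in T_{m-1,k}$, reduce the count to counting such sequences with prescribed multiplicities $\tau(t)n$, and apply the lower bound in Lemma \ref{lemmaentropycounting}. The final simplification $e^{|T_{m-1,k}|}n^{|T_{m-1,k}|}\le n^{2m^k}$ for $n$ large is also exactly the step the paper uses.
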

\begin{proof} We can form a $k$-tuple $x_1,\dots,x_k\in \lbrace 0,\dots,m-1\rbrace^n$ with $x_1+\dots+ x_k=(m-1)\cdot \ones$ coordinate by coordinate by choosing some $t_j\in T_{m-1,k}$ for each $1\leq j\leq n$ and then setting $(x_1^{(j)},\dots,x_k^{(j)})=t_j$. So in order for $(x_1,\dots,x_k)$ to satisfy the assumptions of Proposition \ref{prop-isolated}, we just need to make sure that for each $t\in T_{m-1,k}$ the number of $j\in \lbrace 1,\dots,n\rbrace$ with $t_j=t$ is exactly $\tau(t)n$. So it suffices to prove that there are at least $e^{\HH(\tau)n-\sqrt{n}}$ sequences $t_1,\dots,t_n$ of elements of $T_{m-1,k}$ in which each element $t\in T_{m-1,k}$ occurs exactly $\tau(t)n$ times. By Lemma \ref{lemmaentropycounting} (recall from Proposition \ref{proprounding} that each probability in $\tau$ is an integer multiple of $1/n$),
the number of such sequences is indeed at least $e^{\HH(\tau)n}e^{-\vert T_{m-1,k}\vert}n^{-\vert T_{m-1,k}\vert}\geq e^{\HH(\tau)n}n^{-2m^k}$.
\end{proof}

Combining Claim \ref{claimnumbertaudist} and Proposition \ref{prop-isolated}, we obtain that the expected value of the number $L$ of isolated candidate $k$-tuples is at least
\[
e^{\HH(\tau)n}n^{-2m^k}\exp\left(\HH(\nu)n-\HH(\tau)n-25km\sqrt{n}\right)\\
= e^{\HH(\nu)n-O(\sqrt{n})} \ge (\Gamma_{m,k})^n e^{-C_{m,k}-O(\sqrt{n})}=(\Gamma_{m,k})^{n-O(\sqrt{n})}
.\]
Here, we used $\HH(\nu)\geq \log \Gamma_{m,k}-C_{m,k}/n$. This finishes the proof of Proposition \ref{proplowerboundinZ}.\end{proof}

We will now prove Proposition \ref{prop-isolated}, apart from postponing the proofs of Lemma \ref{lemma-linearalgebra} and Proposition \ref{prop-xprimecount} to the next two sections. While Lemma \ref{lemma-linearalgebra} is a relatively easy linear algebra statement, proving Proposition \ref{prop-xprimecount} is the main difficulty in the first part of this paper. The proof will take up all of Section \ref{sect-xprime-count}.

\begin{proof}[Proof of Proposition \ref{prop-isolated}] Let us fix $x_1,\dots,x_k\in \lbrace 0,\dots,m-1\rbrace^n$ with $x_1+\dots+ x_k=(m-1)\cdot \ones$ and such that for every $t\in T_{m-1,k}$ the number of $j\in \lbrace 1,\dots,n\rbrace$ with $(x_1^{(j)},\dots,x_k^{(j)})=t$ is precisely $\tau(t)n$.

First, we need to prove that $x_i\in X_0$ for $1\leq i\leq k$. Let us assume that $i=1$, the other cases are analogous. In order to show $x_1\in X_0$, we need to check that for every $a=0,\dots,m-1$ the vector $x_1$ has exactly $\nu(a)n$ coordinates equal to $a$. Recall that for each $(a_1,\dots,a_k)\in T_{m-1,k}$ the number of $j\in \lbrace 1,\dots,n\rbrace$ with $(x_1^{(j)},\dots,x_k^{(j)})=(a_1,\dots,a_k)$ is precisely $\tau(a_1,\dots,a_k)n$. Hence the number of $j\in \lbrace 1,\dots,n\rbrace$ with $x_1^{(j)}=a$ is precisely
\[ \sum_{\substack{a_2,\dots,a_k\in \lbrace 0,\dots,m-1\rbrace\\a+a_2+\dots+a_k=m-1}}\tau(a,a_2,\dots,a_k)n=\mu(\tau)(a)n=\nu(a)n.\]
Here we used that $\nu$ is the marginal of $\tau$ (see Proposition \ref{proprounding}). So the vector $x_1$ has indeed exactly $\nu(a)n$ coordinates equal to $a$. Thus, $x_1\in X_0$, and analogously $x_2,\dots,x_k\in X_0$.

Now we need to prove the desired lower bound for the probability that $(x_1,\dots,x_k)$ is an isolated candidate $k$-tuple. Since we already assumed $x_1+\dots+ x_k=(m-1)\cdot \ones$, the $k$-tuple $(x_1,\dots,x_k)$ will be a candidate $k$-tuple if and only if $x_i\in X_i$ for $i=1,\dots,k$. If $(x_1,\dots,x_k)$ is a candidate $k$-tuple, it is isolated if there does not exist another candidate $k$-tuple $(x_1',x_2',\dots,x_k')$ with $x_i'=x_i$ for some $1\leq i\leq k$. Note that for any such $(x_1',x_2',\dots,x_k')$, we would need to have $(x_1',x_2',\dots,x_k')\in X_1\times\dots\times X_k\su X_0^k$. So the probability that $(x_1,\dots,x_k)$ is an isolated candidate $k$-tuple is at least
\begin{equation}\label{eq-expression-prob}
\P[(x_1,\dots,x_k) \in X_1 \times \dots \times X_k]-\sum_{(x_1',\dots,x_k')} \P[(x_1,\dots,x_k),(x_1',\dots,x_k') \in X_1 \times \dots\times X_k],
\end{equation}
where the sum is over all $(x_1',x_2',...,x_k')\in X_0^k$ with $x_1'+\dots+x_k'=(m-1)\cdot \ones$ and $x_i=x_i'$ for some $i$ but with $(x_1',\dots,x_k')\neq (x_1,\dots,x_k)$.

Let us first determine the first term, namely the probability that $x_i\in X_i$ for $i=1,\dots,k$. Recall that by the definition of $X_i$, we have $x_i\in X_i$ if and only if $f(g_i(x_i))\in Y_i$. Also recall that $g_1(x_1),\dots,g_{k-1}(x_{k-1})\in \FPnk$ are linearly independent over $\FP$. Hence by Claim \ref{claim-independence} the images $f(g_1(x_1)),\dots,f(g_{k-1}(x_{k-1}))$ are probabilistically independent and uniformly distributed over $\FP$.
Recall that $x_1+\dots+x_k=(m-1)\cdot \ones$ in $\ZZ^n$ implies $g_1(x_1)+\dots+g_k(x_k)=0$ in $\FPnk$ and therefore $f(g_1(x_1))+\dots+f(g_k(x_k))=0$ in $\FP$. If $(x_1,\dots,x_k) \in X_1 \times \dots \times X_k$, then $(f(g_1(x_1)),\dots,f(g_k(x_k)))\in Y_1 \times \dots \times Y_k$, so $(f(g_1(x_1)),\dots,f(g_k(x_k)))$ needs to be one of the $k$-tuples in the multi-colored sum-free set $(y_{1,j}, y_{2,j}, \dots, y_{k,j})_{j=1}^R$. There are $R$ choices for $j\in \lbrace1,\dots,R\rbrace$, and for each of these choices the probability of $(f(g_1(x_1)),\dots,f(g_k(x_k)))= (y_{1,j}, \dots, y_{k,j})$ equals $(1/P)^{k-1}$ (for each $i=1,\dots,k-1$ the probability of $f(g_i(x_{i}))=y_{i,j}$ is $1/P$ and these events are all independent. If they all happen, then due to $f(g_1(x_1))+\dots+f(g_k(x_k))=0$ we automatically have $f(g_k(x_{k}))=y_{k,j}$). Hence
\begin{equation}\label{eq-prob-1}
\P[(x_1,\dots,x_k) \in X_1 \times \dots \times X_k]=\frac{R}{P^{k-1}}.
\end{equation}

Now, for the second term, let  $(x_1',x_2',...,x_k')\in X_0^k$ with $x_1'+\dots+x_k'=(m-1)\cdot \ones$ and $x_i=x_i'$ for some $i$ but with $(x_1',\dots,x_k')\neq (x_1,\dots,x_k)$.  Let $d=\dim\spn_\QQ(x_1-x_1',\dots,x_k-x_k')$. Note that $1\leq d\leq k-2$, since $x_i-x_i'=0$ for some $i$ (but not for all $i$), and $\sum_i (x_i-x_i')=0$. 
By the following lemma, the dimension of the span of $g_1(x_1),\dots ,g_k(x_k),g_1(x_1'),\dots,g_k(x_k')\in \FPnk$ equals $k-1+d$. We remark that this lemma uses the fact that $P$ is large (in terms of $m$ and $k$).

\begin{lemma}\label{lemma-linearalgebra}
Suppose that $(x_1,\dots,x_k),(x_1',\dots,x_k')\in X_0^k$ satisfy $x_1+\dots+x_k=x_1'+\dots+x_k'=(m-1)\cdot \ones$. Then \[\dim\spn_{\FP}\left(g_1(x_1),\dots ,g_k(x_k),g_1(x_1'),\dots,g_k(x_k')\right)=k-1+\dim\spn_\QQ \left(x_1-x_1',\dots ,x_k-x_k'\right).\]
\end{lemma}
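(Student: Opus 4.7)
Set $V=\spn_{\FP}(g_1(x_1),\dots,g_k(x_k))$ and $W=\spn_{\FP}(g_1(x_1)-g_1(x_1'),\dots,g_k(x_k)-g_k(x_k'))$, and let $d=\dim\spn_{\QQ}(x_1-x_1',\dots,x_k-x_k')$. Since $g_i(x_i')=g_i(x_i)-(g_i(x_i)-g_i(x_i'))$, the span in the statement equals $V+W$. The plan is to first show that this sum is direct so that the dimension decomposes as $\dim V+\dim W$, then to evaluate each summand separately.

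For the first piece, examining only the last $k-1$ coordinates, a relation $\sum_{i=1}^k c_i g_i(x_i)=0$ forces $(c_1-c_k,\dots,c_{k-1}-c_k)=0$, i.e.\ $c_1=\dots=c_k$; and indeed the all-ones relation holds because $\tilde g_1(x_1)+\dots+\tilde g_k(x_k)=0$ in $\ZZ^{n+k-1}$. Hence $\dim V=k-1$, and exactly the same computation shows that any element of $V$ whose last $k-1$ coordinates vanish must be zero, so $V\cap(\FP^n\times\{0\})=\{0\}$. Since every generator of $W$ has zeros in its last $k-1$ coordinates (because $\tilde g_i(x_i)$ and $\tilde g_i(x_i')$ agree there for each $i$), we conclude that $W\subseteq\FP^n\times\{0\}$ and therefore $V\cap W=\{0\}$, giving $\dim(V+W)=(k-1)+\dim W$.

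It remains to show $\dim W=d$. Under the identification $\FP^n\times\{0\}\cong\FP^n$, the generators of $W$ are precisely the mod-$P$ reductions of the integer vectors $x_i-x_i'\in\ZZ^n$. So $\dim W$ is the $\FP$-rank of the $k\times n$ integer matrix $M$ whose rows are the $x_i-x_i'$, while $d$ is the $\QQ$-rank of $M$. The $\FP$-rank is trivially at most $d$ because any $\QQ$-vanishing minor already vanishes over $\ZZ$ and hence mod $P$. For the reverse inequality, pick a nonzero $d\times d$ minor of $M$ witnessing its $\QQ$-rank. Every entry of $M$ satisfies $|M_{ij}|\le m-1$ (since $x_i,x_i'\in\{0,\dots,m-1\}^n$), so this minor has absolute value at most $d!\,(m-1)^d\le k!\,m^k$. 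The key arithmetic input is the choice of $P$: we have $P\ge(2k)!\,m^{2k}>k!\,m^k$, so the minor is nonzero modulo $P$ and the $\FP$-rank of $M$ is at least $d$.

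The only real obstacle is the last reduction-mod-$P$ step, where one must ensure that minors do not accidentally vanish mod $P$; this is precisely why the lower bound on $P$ in \eqref{eq-def-P} (and the explicit consequence $P\ge (2k)!\,m^{2k}$) was set up. Everything else is a straightforward bookkeeping of the last $k-1$ coordinates of the $g_i$'s, combined with the relation $\sum_i g_i(x_i)=0=\sum_i g_i(x_i')$.
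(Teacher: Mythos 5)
Your proof is correct. It takes a slightly different route from the paper, though both rest on the same two ingredients: the last-$(k-1)$-coordinates bookkeeping and the determinant bound making use of $P \geq (2k)!\,m^{2k}$. The paper first computes the span's dimension entirely over $\QQ$ (reducing to $\spn_\QQ(u_1,\dots,u_{k-1},u_1-u_1',\dots,u_{k-1}-u_{k-1}')$, noting that the last $k-1$ coordinates isolate the differences), and only at the very end transfers the whole $(k-1+d)$-dimensional span to $\FP$ by exhibiting a nonzero $(k-1+d)\times(k-1+d)$ integer minor of absolute value less than $P$. You instead organize the span over $\FP$ from the start as a direct sum $V\oplus W$, observe that $\dim_{\FP} V = k-1$ follows purely from the last-$(k-1)$-coordinates structure (no determinant estimate needed there), and apply the mod-$P$ rank-transfer only to the $k\times n$ matrix $M$ of differences $x_i - x_i'$, where $d\times d$ minors with $d\le k-2$ are bounded by $k!\,m^k$. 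This is marginally more economical — the minor argument touches a smaller matrix and smaller submatrices — but the inequalities involved are comfortably inside what the choice of $P$ in \eqref{eq-def-P} guarantees, so the improvement is cosmetic rather than substantive. Both arguments are complete and correct; yours is a clean variant.
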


We postpone the proof of this lemma to Section \ref{sect-lemmas1}. By the lemma, we can choose $k-1+d$ linearly independent vectors among $g_1(x_1),\dots ,g_k(x_k),g_1(x_1'),\dots,g_k(x_k')$. By Claim \ref{claim-independence} the images under $f$ of these $k-1+d$ linearly independent vectors will be independently uniformly distributed in $\FP$.

We need to find an upper bound for the probability $\P[(x_1,\dots,x_k),(x_1',\dots,x_k') \in X_1 \times \dots\times X_k]$. We have $(x_1,\dots,x_k),(x_1',\dots,x_k') \in X_1 \times \dots\times X_k$ if and only if $(f(g_1(x_1)),\dots,f(g_k(x_k))), (f(g_1(x_1')),\dots,f(g_k(x_k')))\in Y_1 \times \dots \times Y_k$. Since $f(g_1(x_1))+\dots+f(g_k(x_k))=0$ and $f(g_1(x_1'))+\dots+f(g_k(x_k'))=0$ in $\FP$, this can only happen if both of $(f(g_1(x_1)),\dots,f(g_k(x_k)))$ and $(f(g_1(x_1')),\dots,f(g_k(x_k')))$ are $k$-tuples from the collection $(y_{1,j}, y_{2,j}, \dots, y_{k,j})_{j=1}^R$. Since $x_i=x_i'$ for some $i$, we also have $f(g_i(x_i))=f(g_i(x_i'))$. Therefore, as no two $k$-tuples in the collection $(y_{1,j}, y_{2,j}, \dots, y_{k,j})_{j=1}^R$ share the same $i$-th vector, we must have
\[(f(g_1(x_1)),\dots,f(g_k(x_k)))=(f(g_1(x_1')),\dots,f(g_k(x_k')))=(y_{1,j}, y_{2,j}, \dots, y_{k,j})\]
for some $j\in \lbrace1,\dots,R\rbrace$. For each of the $R$ choices of $j$, the probability of satisfying the equation above is at most $(1/P)^{k-1+d}$ 
(since among $g_1(x_1),\dots ,g_k(x_k),g_1(x_1'),\dots,g_k(x_k')$ there are $k-1+d$ linearly independent vectors, and each of them has probability $1/P$ to have the desired image under the map $f$)\footnote{One can actually show that this is always true with equality, but we will not need this.}. Hence
\begin{equation}\label{eq-prob-2}
\P[(x_1,\dots,x_k),(x_1',\dots,x_k') \in X_1 \times \dots\times X_k]\leq \frac{R}{P^{k-1+d}}.
\end{equation}

The following proposition gives an upper bound for the number of different choices of $(x_1',x_2',...,x_k')\in X_0^k$ that we need to consider for each value of $1\leq d\leq k-2$. We postpone the proof of Proposition \ref{prop-xprimecount} to Section \ref{sect-xprime-count}. This proof is the major part of the work of deriving Theorem \ref{thmmain} from Theorem \ref{theo-marginal}.

\begin{proposition}\label{prop-xprimecount}Let $(x_1,\dots,x_k)\in X_0^k$ with $x_1+\dots+x_k=(m-1)\cdot \ones$ be fixed such that for every $t\in T_{m-1,k}$ the number of $j\in \lbrace 1,\dots,n\rbrace$ with $(x_1^{(j)},\dots,x_k^{(j)})=t$ is exactly $\tau(t)n$. Then for each $1\leq d\leq k-2$, then there are at most
\[ n^{D_{m,k}+2m^k}\exp\left(\frac{\HH(\tau)-\HH(\nu)}{k-2}\cdot dn\right)\]
different $k$-tuples $(x_1',x_2',...,x_k')\in X_0^k$ satisfying $x_1'+\dots+x_k'=(m-1)\cdot \ones$, $\dim\spn_\QQ(x_1-x_1',\dots,x_k-x_k')=d$,
and $x_i=x_i'$ for some $i$.
\end{proposition}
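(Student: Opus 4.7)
The plan is to reduce the count to an entropy-compression bound by conditioning on two auxiliary pieces of data: (i) an index $i \in \{1,\dots,k\}$ for which $x_i' = x_i$ (losing a factor of at most $k$), and (ii) the column span $V \subset \{v \in \QQ^k : \sum v_j = 0\}$ of the matrix $X - X'$. This $V$ has dimension $d$ by duality with the row span, is spanned by integer vectors in $[-(m-1), m-1]^k$, and is contained in $\{v : v_i = 0\}$ because the $i$-th row of $X - X'$ vanishes. The number of admissible $V$ is bounded by a constant $c_{m,k}$ depending only on $m$ and $k$.

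For fixed $i$ and $V$, I apply Lemma~\ref{lemmaentropycounting2} to the projection $T_{m-1,k}^2 \to T_{m-1,k}$ onto the first coordinate. Each admissible $X'$ yields a joint empirical column distribution $\sigma$ on $T_{m-1,k}^2$ whose first marginal is $\tau$, whose second marginal has each of its $k$ coordinate projections equal to $\nu$, and whose support lies in $\{(t,t') : t - t' \in V\}$. The lemma bounds the number of $X'$ realizing a given $\sigma$ by $\exp(\HH(X' \mid X)\,n)$, and there are at most $n^{|T_{m-1,k}|^2} \le n^{2m^k}$ admissible $\sigma$. The entire proof thus reduces to establishing the entropy inequality
\[
\HH(X' \mid X) \;\le\; \frac{d}{k-2}\bigl(\HH(\tau)-\HH(\nu)\bigr) + \frac{D_{m,k}\log n}{n}
\]
uniformly over such $\sigma$.

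The starting point for this entropy inequality is the identity $\pi(X) = \pi(X')$, where $\pi\colon H \to H/V$ is the quotient of the hyperplane $H = \{\sum v_j = 0\}$ by $V$. Conditioning on the common value of $\pi(X)$ and applying subadditivity to $\HH(X, X' \mid \pi(X))$ yields $\HH(X, X') \le \HH(X) + \HH(X') - \HH(\pi(X))$, hence $\HH(X' \mid X) \le \HH(X') - \HH(\pi(X))$. The max-entropy clause of Proposition~\ref{proprounding} (applied to the second marginal of $\sigma$, which has all coordinate projections equal to $\nu$) gives $\HH(X') \le \HH(\tau) + D_{m,k}\log n / n$. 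Since $V \subset \{v_i = 0\}$, the coordinate $X_i$ is a function of $\pi(X)$, and the desired bound reduces to the Shearer-type lower bound
\[
\HH\bigl(\pi(X) \bigm| X_i\bigr) \;\ge\; \frac{k-2-d}{k-2}\,\HH\bigl(X \bigm| X_i\bigr)
\]
for the $S_{k-1}$-symmetric conditional distribution $X \mid X_i$, which lives on the $(k-2)$-dimensional hyperplane $\{\sum_{j \ne i} v_j = m-1-X_i\}$ and is being projected modulo the $d$-dimensional subspace $V$.

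The main obstacle is proving this subspace Shearer-type inequality when $V$ is not coordinate-aligned. Classical symmetric Shearer gives the analogous ratio $(k-2-d)/(k-2)$ when $V$ is spanned by standard basis vectors of the ambient hyperplane, but a generic $V$ admits no such description. I expect to extract the sharp factor by averaging over the $S_{k-1}$-orbit $\{gV : g \in S_{k-1}\}$ of $V$ inside the ambient hyperplane (which preserves both the distribution of $X \mid X_i$ and the dimension of the quotient) and then combining the resulting projections via a generalized subadditivity inequality. This averaging is the heart of the ``new entropy inequality'' alluded to in the introduction. In the degenerate case $k = 3$ (where necessarily $d = k-2 = 1$), the right-hand side of the Shearer-type inequality is zero, so its proof is trivial; this is why the original $k=3$ argument of Kleinberg--Sawin--Speyer is substantially simpler and only needs the max-entropy clause of Proposition~\ref{proprounding} as an additional input.
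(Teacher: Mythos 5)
Your reduction is correct and is, up to a duality of formulation, the paper's own argument. You work with the $d$-dimensional column span $V$ of the $k\times n$ difference matrix and quotient by it; the paper works with the $(k-1-d)$-dimensional annihilator $W = W^*\cap V_k$ and evaluates the linear functionals $(\tilde w(z))_{w\in W}$. Since $\operatorname{ann}(W)\cap V_k = V$, knowing $(\tilde w(z))_{w\in W}$ is the same as knowing $z$ modulo $V$, i.e.\ $\pi(X)$; so your Shearer-type inequality $\HH(\pi(X)\mid X_i)\ge \frac{k-2-d}{k-2}\HH(X\mid X_i)$ is a literal restatement of the paper's Corollary~\ref{corollarysubspaceentropylowerbound}. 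The conditioning on $i$, the $c_{m,k}$-bound on the number of admissible $V$, the application of Lemma~\ref{lemmaentropycounting2} with the empirical column distribution fixed, and the use of the max-entropy clause of Proposition~\ref{proprounding} to bound $\HH(X')$ all track the paper's proof precisely.

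The genuine gap is the step you flag: you assert that the Shearer-type inequality for a generic subspace $V$ should follow by ``averaging over the $S_{k-1}$-orbit \dots\ and combining via a generalized subadditivity inequality,'' but no such clean averaging argument is given, and in fact a direct averaging does not obviously produce the sharp $\frac{\dim W}{\ell-1}$ factor for an arbitrary (non-coordinate-aligned) subspace. The paper's Lemma~\ref{lemmasubspaceentropylowerbound} does not average: it proceeds by strong induction on $\dim W$, using Lemma~\ref{lemmaSlimagesdetermine} (that the $S_\ell$-orbit of any nonzero $v\in W$ spans $V_\ell$) to choose a \emph{minimal} sequence $\sigma_1,\dots,\sigma_q$ with $W^{\sigma_1}+\cdots+W^{\sigma_q}=V_\ell$, then telescopes $\HH(\pi)=\sum_j[\HH(\cdot_{W_j})-\HH(\cdot_{W_{j-1}})]$, bounds each increment via $\HH(\cdot_{W^{\sigma_j}}\mid \cdot_{W_{j-1}})\le \HH(\cdot_{W^{\sigma_j}}\mid \cdot_{U_j})$ with $U_j=W^{\sigma_j}\cap W_{j-1}$, and applies the inductive hypothesis to $U_j$. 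This telescoping with intersections is the new idea that makes the argument work for $k>3$; you have correctly located where it is needed, but the mechanism you propose (orbit averaging plus generic subadditivity) is not the mechanism that closes the gap, and without it the proposal does not constitute a proof. (A small accounting slip: the number of admissible joint distributions $\sigma$ on $T_{m-1,k}^2$ with denominator $n$ is at most $n^{|T_{m-1,k}|^2}\le n^{m^{2k}}$, not $n^{2m^k}$; this only changes the polynomial prefactor.)
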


For every $1\leq d\leq k-2$ we have by (\ref{eq-def-P})
\[\frac{n^{D_{m,k}+2m^k}\exp\left(\frac{\HH(\tau)-\HH(\nu)}{k-2}\cdot dn\right)}{P^d}\leq \frac{n^{D_{m,k}+2m^k}\exp\left(\frac{\HH(\tau)-\HH(\nu)}{k-2}\cdot dn\right)}{4^dn^{dD_{m,k}+2dm^k}\exp\left(\frac{\HH(\tau)-\HH(\nu)}{k-2}\cdot dn\right)}\leq \frac{1}{4^d}.\] Hence by Proposition \ref{prop-xprimecount} and (\ref{eq-prob-2}) the big sum in (\ref{eq-expression-prob}) is at most
\[\sum_{d=1}^{k-2}n^{D_{m,k}+2m^k}\exp\left(\frac{\HH(\tau)-\HH(\nu)}{k-2}\cdot dn\right)\cdot \frac{R}{P^{k-1+d}}\leq \frac{R}{P^{k-1}}\sum_{d=1}^{k-2}\frac{1}{4^d}\leq \frac{R}{2P^{k-1}}.\]
Recalling (\ref{eq-prob-1}), (\ref{eq-R-large}), (\ref{eq-def-P}), and (\ref{eq-P-upper-bound}), we obtain that the probability that $(x_1,\dots,x_k)$ is an isolated candidate $k$-tuple is at least
\begin{multline*}
\frac{R}{P^{k-1}}-\frac{R}{2P^{k-1}}=\frac{R}{2P^{k-1}}=\frac{R}{P}\cdot \frac{1}{2P^{k-2}}
\geq e^{-12\sqrt{\log P \log k}}\cdot \frac{1}{2}\cdot n^{-(D_{m,k}+3m^k)(k-2)}\exp(-(\HH(\tau)-\HH(\nu))n)\\
\geq n^{-k(D_{m,k}+3m^k)}e^{-12\sqrt{2k(\log m)n \log k}}\exp(\HH(\nu)n-\HH(\tau)n)
\geq \exp\left(\HH(\nu)n-\HH(\tau)n-25km\sqrt{n}\right),
\end{multline*}
where in the last step we used that $n$ is sufficiently large in terms of $m$ and $k$. This finishes the proof of Proposition \ref{prop-isolated}.\end{proof}

\section{Proof of Proposition \ref{prop-xprimecount}}
\label{sect-xprime-count}

In this section, we will prove Proposition \ref{prop-xprimecount}. The first two subsections contain preparations for the proof, and in the third subsection we will actually prove Proposition \ref{prop-xprimecount}.

\subsection{An entropy inequality}

In this subsection, we will establish an inequality between entropies which will be relevant in the process of proving Proposition \ref{prop-xprimecount}.

For any $\l\geq 2$, let $V_\l\su \QQ^\l$ denote the subspace consisting of those vectors $v=(v_1,\dots,v_\l)$ with $v_1+\dots+v_\l=0$. Note that $V_\l$ is a hyperplane in $\QQ^\l$, so it has dimension $\l-1$.
To any $v=(v_1,\dots,v_\l)\in V_\l$ we can associate a map $\tilde{v}: T_{r,\l}\to \mathbb{Q}$ by setting $\tilde{v}(a_1,\dots,a_\l)=v_1a_1+\dots+v_\l a_\l$ for every $(a_1,\dots,a_\l)\in T_{r,\l}$ (here, $r\geq 0$ is a non-negative integer).
Note that for fixed $r\geq 0$ and $\l\geq 2$, any $(a_1,\dots,a_\l)\in T_{r,\l}$ is uniquely determined by its values $\tilde{v}(a_1,\dots,a_\l)$ for all $v\in V_\l$. Indeed, if we are given $\tilde{v}(a_1,\dots,a_\l)$ for all $v\in V_\l$, we in particular know the values $a_j-a_1$ for $j=2,\dots,\l$. Using $a_1+\dots+a_\l=r$, this determines $(a_1,\dots,a_\l)$.

\begin{lemma} \label{lemmaSlimagesdetermine}
	Given any non-zero vector $v\in V_\l$, the vectors $v^\sigma$ for $\sigma \in S_\l$ span the entire space $V_\l$.
\end{lemma}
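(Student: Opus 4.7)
The plan is to exploit the obvious $S_\ell$-invariance of $W := \spn\{v^\sigma : \sigma \in S_\ell\}$ together with the fact that the vectors $\{e_a - e_b : a \ne b\}$ already span $V_\ell$. So it suffices to exhibit a single nonzero scalar multiple of some $e_i - e_j$ inside $W$, and then to transport it around via the $S_\ell$-action.

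First I would note that $v$ cannot have all coordinates equal, since in $V_\ell$ this would force $v = 0$. Hence we can pick indices $i \neq j$ with $v_i \neq v_j$. Letting $\tau = (i\ j) \in S_\ell$ be the corresponding transposition, the vector $v - v^{\tau}$ has $k$-th coordinate $0$ for all $k \notin \{i,j\}$ (since $\tau$ fixes such $k$), and $i$-th and $j$-th coordinates $v_i - v_j$ and $v_j - v_i$ respectively. Thus $v - v^{\tau} = (v_i - v_j)(e_i - e_j)$, a nonzero scalar multiple of $e_i - e_j$ lying in $W$.

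Next I would verify that $W$ is $S_\ell$-invariant under the coordinate-permutation action: a direct computation shows $(v^\sigma)^\pi = v^{\sigma \pi}$, so the generators of $W$ are permuted among themselves. Applying any $\pi \in S_\ell$ to the element $(v_i - v_j)(e_i - e_j) \in W$ and using $(e_i)^\pi = e_{\pi^{-1}(i)}$, we deduce $e_a - e_b \in W$ for every pair $a \ne b$. Since these differences span $V_\ell$, we conclude $W = V_\ell$. There is no serious obstacle here; the only mild care needed is the bookkeeping for the $S_\ell$-action, and the crux is the single transposition trick that produces a ``two-coordinate'' element of $W$ from the orbit of $v$.
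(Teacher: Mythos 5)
Your proof is correct and follows essentially the same approach as the paper's: use the fact that $v$ has two unequal coordinates, apply the corresponding transposition to produce a scalar multiple of $e_i - e_j$ in the span, then transport it around via the $S_\ell$-invariance to obtain all differences $e_a - e_b$, which span $V_\ell$.
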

\begin{proof}
	Let $v=(v_1,\dots,v_\l)$ and note that $v\neq 0$ and $v_1+\dots+v_\l=0$ implies that the coordinates $v_1,\dots,v_\l$ are not all equal. So we may assume without loss of generality that $v_1\neq v_2$. Let $V_\l'$ be the subspace of $V_\l$ spanned by the vectors $v^\sigma$ for $\sigma \in S_\l$. By definition, the subspace $V_\l'$ is stable under permutations of the coordinates. We need to show that $V_\l'=V_\l$.
	Considering the transposition  $\sigma=(1,2)\in S_\l$, we have
	\[v^\sigma-v=(v_2,v_1, v_3,\dots,v_\l)-(v_1,v_2, v_3,\dots,v_\l)=(v_2-v_1, v_1-v_2,0,\dots,0)=(v_2-v_1)\cdot (1,-1,0,\dots,0).\]
	Therefore $(v_2-v_1)\cdot (1,-1,0,\dots,0)\in V_\l'$. Hence, using $v_1\neq v_2$, we obtain $(1,-1,0,\dots,0)\in V_\l'$. So all permutations of the vector $(1,-1,0,\dots,0)$ are contained in $V_\l'$ as well. It is easy to see that all the permutations of $(1,-1,0,\dots,0)$ generate $V_\l$, hence $V_\l'=V_\l$ as desired.\end{proof}

\begin{lemma} \label{lemmasubspaceentropylowerbound}
	Let $r\geq 0$ and $\l\geq 2$. Suppose that $\pi$ is an $S_\l$-symmetric distribution on $T_{r,\l}$ and that $z$ is a random variable on $T_{r,\l}$ with distribution $\pi$. Then for any subspace $W\su V_\l$, we have
	\[\HH((\tilde{w}(z))_{w\in W}) \ge \frac{\dim W}{\l-1} \HH(\pi).\]
\end{lemma}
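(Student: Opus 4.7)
I will prove the inequality by induction on $d := \dim W$, using three general properties of the function $h(W) := \HH((\tilde{w}(z))_{w \in W})$ defined on subspaces $W \leq V_\l$. Since $h(W) = \HH(\tilde{w}_1(z),\dots,\tilde{w}_d(z))$ for any basis $w_1,\dots,w_d$ of $W$, this quantity is well-defined. Monotonicity, $W \su W' \Rightarrow h(W) \leq h(W')$, is immediate. For submodularity $h(W + W') + h(W \cap W') \leq h(W) + h(W')$, choose a basis $(u_i)$ of $W \cap W'$ and extend it to bases $(u_i, s_j)$ of $W$ and $(u_i, t_k)$ of $W'$; then the standard submodularity of entropy $\HH(X, Y, Z) + \HH(Y) \leq \HH(X, Y) + \HH(Y, Z)$ applied with $Y = (\tilde{u}_i(z))$, $X = (\tilde{s}_j(z))$, $Z = (\tilde{t}_k(z))$ gives the claim. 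For $S_\l$-invariance $h(W^\sigma) = h(W)$, the identity $\tilde{w^\sigma}(z) = \tilde{w}(z^{\sigma^{-1}})$ together with $z^{\sigma^{-1}} \sim z$ (from the $S_\l$-symmetry of $\pi$) shows that the joint distributions of $(\tilde{w}(z))_{w \in W^\sigma}$ and $(\tilde{w}(z))_{w \in W}$ coincide. Note also $h(0) = 0$ and $h(V_\l) = \HH(\pi)$, the latter because the family $(\tilde{v}(z))_{v \in V_\l}$ determines $z$, as remarked just before Lemma \ref{lemmaSlimagesdetermine}.

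\textbf{The induction.} The base case $d = 0$ is trivial. For $d \geq 1$, assume the inequality for all subspaces of $V_\l$ of dimension strictly less than $d$, and fix $W$ with $\dim W = d$. Since $W \neq 0$, Lemma \ref{lemmaSlimagesdetermine} applied to any nonzero $v \in W$ yields $\sum_{\sigma \in S_\l} W^\sigma = V_\l$. This lets me greedily produce elements $W_1 = W, W_2, \dots, W_k$ in the $S_\l$-orbit of $W$ with $W_j \not\su W_1 + \cdots + W_{j-1}$ for each $j \geq 2$ and $W_1 + \cdots + W_k = V_\l$: as long as the running sum is a proper subspace, $\sum_\sigma W^\sigma = V_\l$ supplies some orbit element lying outside it.

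\textbf{Closing the induction.} Set $L_j := W_j \cap (W_1 + \cdots + W_{j-1})$ for $j \geq 2$ and $d_j := \dim L_j$. The condition $W_j \not\su W_1 + \cdots + W_{j-1}$ forces $d_j < d$, and the modularity identity $\dim(W_1 + \cdots + W_j) = \dim(W_1 + \cdots + W_{j-1}) + d - d_j$ telescopes to $\sum_{j=2}^k d_j = kd - (\l-1)$. Iterating submodularity gives
\[\sum_{j=1}^k h(W_j) \geq h(W_1 + \cdots + W_k) + \sum_{j=2}^k h(L_j) = \HH(\pi) + \sum_{j=2}^k h(L_j).\]
The inductive hypothesis applied to each $L_j$ (of dimension $d_j < d$) yields $h(L_j) \geq (d_j/(\l-1))\HH(\pi)$, and $S_\l$-invariance gives $h(W_j) = h(W)$ for every $j$. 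Combining,
\[k \cdot h(W) \geq \HH(\pi)\left(1 + \frac{\sum_{j=2}^k d_j}{\l-1}\right) = \HH(\pi) \cdot \frac{kd}{\l-1},\]
which rearranges to $h(W) \geq (d/(\l-1))\HH(\pi)$. The main conceptual point, and the one place the argument might fail for a less symmetric distribution, is that the $S_\l$-symmetry of $\pi$, accessed through Lemma \ref{lemmaSlimagesdetermine}, combines with submodularity and a standard dimension count to produce exactly the factor $d/(\l-1)$; a naive subadditive covering of $V_\l$ by orbit copies of $W$ would only deliver the weaker bound $h(W) \geq \HH(\pi)/\lceil (\l-1)/d \rceil$.
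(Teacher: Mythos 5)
Your proof is correct and is essentially the same as the paper's: both construct a chain of $S_\l$-orbit copies of $W$ whose partial sums strictly increase up to $V_\l$ (you do this greedily, the paper by taking a minimal-length sequence), apply the inductive hypothesis to the intersections with the running sums, and telescope an entropy inequality over the chain. The only cosmetic difference is that you package the key step as submodularity of $h$, whereas the paper phrases the same inequality via conditional entropies and the bound $\HH(X\mid Y,Z)\le\HH(X\mid Y)$.
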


\begin{proof}
	We prove the lemma by strong induction on $\dim W$. The case $\dim W=0$ is trivial. Suppose therefore that $\dim W>0$ and that we have already proved the lemma for every subspace of $V_\l$ with smaller dimension.

	For every $\sigma\in S_\l$, we obtain a subspace $W^{\sigma}=\lbrace w^\sigma\mid w\in W\rbrace\su V_\l$ from $W$ by permuting all vectors $w\in W$ according to $\sigma$. Clearly, $\dim W^{\sigma}=\dim W$.
	Given $w\in W$ and $\sigma\in S_\l$, we have $\tilde{w}^\sigma(t)=\tilde{w}(t^{\sigma^{-1}})$ for every $t\in T_{r,\l}$. Therefore, as $\pi$ is $S_\l$-symmetric, we can conclude that $(\tilde{w}^\sigma(z))_{w\in W}$ and $(\tilde{w}(z))_{w\in W}$ have the same distribution. Hence
	\begin{equation}\label{eq-entropy-subspace-permut}
	\HH((\tilde{w}(z))_{w\in W^{\sigma}})=\HH((\tilde{w}^\sigma(z))_{w\in W})=\HH((\tilde{w}(z))_{w\in W})
	\end{equation}
	for every $\sigma\in S_\l$.
	
	As $\dim W>0$, the subspace $W\su V_\l$ contains a non-zero vector $v$. By Lemma \ref{lemmaSlimagesdetermine}, the vectors $v^\sigma$ for $\sigma \in S_\l$ span the entire space $V_\l$. In particular, we have
	\[\sum_{\sigma\in S_\l}W^\sigma=V_\l.\]
	Let $\sigma_1,\sigma_2,...,\sigma_q \in S_\l$ be a sequence of permutations with $W^{\sigma_1}+\dots+W^{\sigma_q}=V_\l$ and such that there is no shorter sequence of permutations with that property. In particular, for every $j=1,\dots,q$ we have $W^{\sigma_j}\nsubseteq\ W^{\sigma_1}+\dots+W^{\sigma_{j-1}}$, because otherwise $\sigma_j$ could be omitted from the sequence. Since $\l\geq 2$, we have $V_\l\neq 0$, and hence $q\geq 1$.
	For $j=1,\dots, q$ set
	\[W_j=W^{\sigma_1}+\dots+W^{\sigma_j},\]
	and set $W_0=0$. Then $W_q=V_l$ and $W_{j-1} \ne W_{j}$ for $j=1,\dots, q$ (so $W_{j-1}$ is a proper subspace of $W_j$).
	Let $U_j=W^{\sigma_j}\cap W_{j-1}$ for $j=1,\dots, q$. Then
	\[\dim W_j=\dim(W_{j-1}+W^{\sigma_j})=\dim W_{j-1}+\dim W^{\sigma_j}-\dim U_j=\dim W_{j-1}+\dim W-\dim U_j.\]
	Hence $\dim U_j=\dim W-(\dim W_j-\dim W_{j-1})$ for $j=1,\dots, q$.
	In particular $\dim U_j<\dim W$, so by the inductive assumption we have
	\begin{equation}\label{eq-entropy-lemma-inductive}
	\HH((\tilde{w}(z))_{w\in U_j}) \ge \frac{\dim U_j}{\l-1}\HH(\pi)=\frac{\dim W-(\dim W_j-\dim W_{j-1})}{\l-1}\HH(\pi).
	\end{equation}
	
	Recall that any $t\in T_{r,\l}$ is uniquely determined by its images $\tilde{v}(t)$ for all $v\in V_\l$. Hence the random variable $z$ is determined by $(\tilde{w}(z))_{w\in W_q}$ (recall that $W_q=V_\l$). Conversely, $(\tilde{w}(z))_{w\in W_q}$ is clearly also detemined by $z$. Therefore we have $\HH(\pi)=\HH(z)=\HH((\tilde{w}(z))_{w\in W_q})$. Also note that if $w=0$, then $\HH(\tilde{w}(z))=0$, hence $\HH((\tilde{w}(z))_{w\in W_0})=0$ (recall that $W_0=0$).
	Using $W_0\su W_1\su\dots\su W_q$, we therefore obtain
	\[\HH(\pi)=\HH((\tilde{w}(z))_{w\in W_q})-\HH((\tilde{w}(z))_{w\in W_0})=\sum_{j=1}^{q}\big[\HH((\tilde{w}(z))_{w\in W_j})- \HH((\tilde{w}(z))_{w\in W_{j-1}})\big].\]
	For every $j=1,\dots,q$, we have $W_j=W_{j-1}+W^{\sigma_j}$. Hence, $(\tilde{w}(z))_{w\in W_j}$ is completely determined by $(\tilde{w}(z))_{w\in W_{j-1}}$ and $(\tilde{w}(z))_{w\in W^{\sigma_j}}$. So $\HH((\tilde{w}(z))_{w\in W_j})=\HH((\tilde{w}(z))_{w\in W_{j-1}}, (\tilde{w}(z))_{w\in W^{\sigma_j}})$ and we obtain
	\[\HH(\pi)=\sum_{j=1}^{q}\big[\HH((\tilde{w}(z))_{w\in W_{j-1}}, (\tilde{w}(z))_{w\in W^{\sigma_j}})- \HH((\tilde{w}(z))_{w\in W_{j-1}})\big]=\sum_{j=1}^{q}\HH((\tilde{w}(z))_{w\in W^{\sigma_j}} \mid (\tilde{w}(z))_{w\in W_{j-1}}).\]
	Using $U_j\su W_{j-1}$, this gives
	\[\HH(\pi)=\sum_{j=1}^{q}\HH((\tilde{w}(z))_{w\in W^{\sigma_j}} \mid (\tilde{w}(z))_{w\in W_{j-1}})\leq \sum_{j=1}^{q}\HH((\tilde{w}(z))_{w\in W^{\sigma_j}} \mid (\tilde{w}(z))_{w\in U_{j}}).\]
	Now, using $U_j\su W^{\sigma_j}$, (\ref{eq-entropy-subspace-permut}) and (\ref{eq-entropy-lemma-inductive}), we obtain
	\begin{multline*}
	\HH(\pi)\leq \sum_{j=1}^{q}\big[\HH((\tilde{w}(z))_{w\in W^{\sigma_j}}) - \HH((\tilde{w}(z))_{w\in U_{j}})\big]=\sum_{j=1}^{q}\big[\HH((\tilde{w}(z))_{w\in W}) - \HH((\tilde{w}(z))_{w\in U_{j}})\big]\\
	=q\HH((\tilde{w}(z))_{w\in W})-\sum_{j=1}^{q}\HH((\tilde{w}(z))_{w\in U_{j}})
	\leq q\HH((\tilde{w}(z))_{w\in W})-\sum_{j=1}^{q}\frac{\dim W-(\dim W_j-\dim W_{j-1})}{\l-1}H(\pi)\\
	=q\HH((\tilde{w}(z))_{w\in W})- \frac{q\dim W}{\l-1}\HH(\pi)+\frac{\sum_{j=1}^{q}(\dim W_j-\dim W_{j-1})}{\l-1}\HH(\pi).
	\end{multline*}
	Recall that $\dim W_0=0$ and $\dim W_q=\dim V_\l=\l-1$. Thus,
	\[\sum_{j=1}^{q}(\dim W_j-\dim W_{j-1})=\dim W_q-\dim W_0=\l-1\]
	and we obtain
	\[\HH(\pi)\leq q\HH((\tilde{w}(z))_{w\in W}) - \frac{q\dim W}{\l-1}\HH(\pi)+\HH(\pi).\]
	Now, rearranging gives
	\[\frac{q\dim W}{l-1}\HH(\pi) \leq q\HH((\tilde{w}(z))_{w\in W}) .\]
	Since $q > 0$, this proves the lemma.
\end{proof}

We can deduce the following corollary from Lemma \ref{lemmasubspaceentropylowerbound}. Here, $\nu$ and $\tau$ are the distributions on $\lbrace 0,\dots, m-1\rbrace$ and $T_{m-1,k}$, respectively, that we fixed earlier using Proposition \ref{proprounding}. Also recall that $k\geq 3$.

\begin{corollary} \label{corollarysubspaceentropylowerbound}
	Let $z_\tau$ be a random variable on $T_{m-1,k}$ with distribution $\tau$. Then for any subspace $W\su V_k$ with $(1,\dots,1,-(k-1))\in W$ we have 
	\[\HH((\tilde{w}(z_\tau))_{w\in W})\geq \HH(\nu)+\frac{\dim W-1}{k-2}(\HH(\tau)-\HH(\nu)).\]
\end{corollary}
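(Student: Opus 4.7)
The plan is to use the distinguished vector $v_0 = (1,1,\dots,1,-(k-1)) \in W$ to peel off an entropy contribution of $\HH(\nu)$, and then invoke Lemma \ref{lemmasubspaceentropylowerbound} one dimension lower. To begin, for any $(a_1,\dots,a_k) \in T_{m-1,k}$, the constraint $a_1+\dots+a_k=m-1$ gives $\tilde{v_0}(a_1,\dots,a_k) = (a_1+\dots+a_{k-1}) - (k-1)a_k = (m-1) - k a_k$. Hence $\tilde{v_0}(z_\tau)$ is in bijection with the $k$-th coordinate $A_k$ of $z_\tau$, which is distributed according to the marginal $\mu(\tau) = \nu$. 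Since $v_0 \in W$, the chain rule yields
\[\HH((\tilde{w}(z_\tau))_{w\in W}) = \HH(\tilde{v_0}(z_\tau)) + \HH((\tilde{w}(z_\tau))_{w\in W} \mid \tilde{v_0}(z_\tau)) = \HH(\nu) + \HH((\tilde{w}(z_\tau))_{w\in W} \mid A_k),\]
so it suffices to prove $\HH((\tilde{w}(z_\tau))_{w\in W} \mid A_k) \geq \tfrac{\dim W - 1}{k-2}(\HH(\tau) - \HH(\nu))$.

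I would then expand the conditional entropy as $\sum_a \nu(a)\HH((\tilde{w}(z_\tau))_{w\in W} \mid A_k = a)$. Conditional on $A_k = a$, the tuple $(A_1,\dots,A_{k-1})$ has some distribution $\tau_a$ on $T_{m-1-a,k-1}$, and $\tau_a$ inherits $S_{k-1}$-symmetry from the $S_k$-symmetry of $\tau$. For $w=(w_1,\dots,w_k)\in W$, conditional on $A_k = a$, the value $\tilde{w}(z_\tau) = w_1 A_1 + \dots + w_{k-1}A_{k-1} + w_k a$ equals $\tilde{w''}(A_1,\dots,A_{k-1})$ plus a constant depending only on $w$ and $a$, where $w'' := (w_1 + \tfrac{w_k}{k-1}, \dots, w_{k-1} + \tfrac{w_k}{k-1})$ (a direct check using $w \in V_k$ confirms $w'' \in V_{k-1}$). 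Defining the linear map $\phi\colon W \to V_{k-1}$ by $\phi(w) = w''$ and $W' := \phi(W)$, since shifting each coordinate of a tuple by a fixed constant preserves joint entropy, we obtain
\[\HH((\tilde{w}(z_\tau))_{w\in W} \mid A_k = a) = \HH\bigl((\tilde{w'}(A_1,\dots,A_{k-1}))_{w' \in W'}\bigr),\]
with $(A_1,\dots,A_{k-1}) \sim \tau_a$. Lemma \ref{lemmasubspaceentropylowerbound} applied to $W' \subseteq V_{k-1}$ with the $S_{k-1}$-symmetric distribution $\tau_a$ then lower-bounds this by $\tfrac{\dim W'}{k-2}\HH(\tau_a)$, and averaging over $a$ gives
\[\sum_a \nu(a) \cdot \tfrac{\dim W'}{k-2}\HH(\tau_a) = \tfrac{\dim W'}{k-2}\HH(\tau \mid A_k) = \tfrac{\dim W'}{k-2}(\HH(\tau) - \HH(\nu)),\]
completing the reduction provided $\dim W' = \dim W - 1$.

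The only step requiring genuine care is this dimension calculation. Solving $\phi(w)=0$ forces $w_1=\dots=w_{k-1}=-w_k/(k-1)$, which together with $w \in V_k$ means $w$ is a scalar multiple of $v_0$; hence $\ker \phi = \spn(v_0) \cap W$. This is precisely where the hypothesis $v_0 \in W$ is used: it ensures $\spn(v_0) \subseteq W$, so $\ker \phi = \spn(v_0)$ and $\dim W' = \dim W - 1$. This is also where the factor of $k-2$ (rather than $k-1$) appears in the denominator. Everything else—the chain rule split, the shift-invariance of entropy, and the identity $\HH(\tau \mid A_k) = \HH(\tau) - \HH(\nu)$—is routine manipulation of the material from Section \ref{sect-entropy-prelim}.
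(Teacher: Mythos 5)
Your proposal is correct and follows essentially the same route as the paper. The paper's version takes $W' = \{w \in W : w_k = 0\}$ and drops the last coordinate to land in $V_{k-1}$, which yields exactly the same subspace as your $\phi(W)$ (indeed $\phi$ is precisely the projection of $W$ onto that complement of $\spn(v_0)$ followed by dropping the vanishing last coordinate); apart from this cosmetic difference in how $W'$ is described, the chain-rule split off $\HH(\nu)$, the conditioning on the last coordinate, the appeal to Lemma \ref{lemmasubspaceentropylowerbound} with $\l = k-1$, and the final averaging all match the paper's argument.
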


\begin{proof}
	Set $v=(1,\dots,1,-(k-1))\in W$. Furthermore, let
	\[W'=\lbrace (w_1,\dots,w_k)\in W\mid w_k=0\rbrace.\]
	Then $W'$ is a subspace of $W$ and $\dim W'=\dim W-1$. Note that $W'$ and $v$ together span the entire space $W$. Hence $(\tilde{w}(z_\tau))_{w\in W}$ is completely determined by $(\tilde{w}(z_\tau))_{w\in W'}$ and $\tilde{v}(z_\tau)$,
	so
	\[\HH((\tilde{w}(z_\tau))_{w\in W})=\HH((\tilde{w}(z_\tau))_{w\in W'}, \tilde{v}(z_\tau)).\]
	Furthermore, note that when writing $(z_\tau^{(1)},\dots,z_\tau^{(k)})$ for the coordinates of $z_\tau$, we have
	\[\tilde{v}(z_\tau)=z_\tau^{(1)}+\dots+z_\tau^{(k-1)}-(k-1)z_\tau^{(k)}=(z_\tau^{(1)}+\dots +z_\tau^{(k)})-kz_\tau^{(k)}=(m-1)-kz_\tau^{(k)}.\]
	So the value $\tilde{v}(z_\tau)$ is in one-to-one correspondence with the last coordinate $z_\tau^{(k)}$ of $z_\tau$. In particular, $\HH(\tilde{v}(z_\tau))=\HH(z_\tau^{(k)})$. But note that the projection $z_\tau^{(k)}$ of $z_\tau$ to the last coordinate has distribution $\nu$ (since the marginal of $\tau$ is $\nu$), so $\HH(\tilde{v}(z_\tau))=\HH(z_\tau^{(k)})=\HH(\nu)$.
	
	Now, we have
	\begin{multline*}
	\HH((\tilde{w}(z_\tau))_{w\in W})-\HH(\nu)=\HH((\tilde{w}(z_\tau))_{w\in W'}, \tilde{v}(z_\tau))-\HH(\tilde{v}(z_\tau))=\HH((\tilde{w}(z_\tau))_{w\in W'}\mid \tilde{v}(z_\tau))\\
	=\sum_{s\in \tilde{v}(T_{m-1,k})} \P(\tilde{v}(z_\tau)=s)\HH((\tilde{w}(z_\tau))_{w\in W'}\mid \tilde{v}(z_\tau)=s).
	\end{multline*}
	Using the one-to-one correspondence between the values $\tilde{v}(z_\tau)$ and  $z_\tau^{(k)}$, the right-hand side can be rewritten in terms of $z_\tau^{(k)}$ instead of $\tilde{v}(z_\tau)$. So we obtain
	\[\HH((\tilde{w}(z_\tau))_{w\in W})-\HH(\nu)= \sum_{a\in \lbrace 0,\dots,m-1\rbrace} \P(z_\tau^{(k)}=a)\HH((\tilde{w}(z_\tau))_{w\in W'}\mid z_\tau^{(k)}=a).\]
	For each $a\in \lbrace 0,\dots,m-1\rbrace$, the probability distribution of $z_\tau$ conditioned on $z_\tau^{(k)}=a$ gives an $S_{k-1}$-symmetric probability distribution on $T_{m-1-a,k-1}$ (by omitting the last coordinate $z_\tau^{(k)}=a$). Note that $k-1\geq 2$ and that for all $w\in W'$ the last coordinate is zero. By omitting this last coordinate zero, we can interpret $W'$ as a subspace of $V_{k-1}\su \QQ^{k-1}$. Note that for $w\in W'$ the value of $\tilde{w}(z_\tau)$ remains the same when we omit the last coordinate of both $w$ and $z_\tau$. Hence Lemma \ref{lemmasubspaceentropylowerbound} applied to the probability distribution of $((z_\tau^{(1)},\dots,z_\tau^{(k-1)})\mid z_\tau^{(k)}=a)$ on $T_{m-1-a,k-1}$ gives
	\[\HH((\tilde{w}(z_\tau))_{w\in W'}\mid z_\tau^{(k)}=a)\geq \frac{\dim W'}{(k-1)-1}\HH((z_\tau^{(1)},\dots,z_\tau^{(k-1)})\mid z_\tau^{(k)}=a)=\frac{\dim W'}{k-2}\HH(z_\tau\mid z_\tau^{(k)}=a)\]
	for every $a\in \lbrace 0,\dots,m-1\rbrace$. Thus,
	\begin{multline*}
	\HH((\tilde{w}(z_\tau))_{w\in W})-\HH(\nu)\geq \frac{\dim W'}{k-2}\sum_{a\in \lbrace 0,\dots,m-1\rbrace} \P(z_\tau^{(k)}=a)\HH(z_\tau\mid z_\tau^{(k)}=a)\\
	=\frac{\dim W'}{k-2}\HH(z_\tau\mid z_\tau^{(k)})=\frac{\dim W-1}{k-2}(\HH(z_\tau)-\HH(z_\tau^{(k)}))=\frac{\dim W-1}{k-2}(\HH(z_\tau)-\HH(\nu)),
	\end{multline*}
	where we again used $\HH(z_\tau^{(k)})=\HH(\nu)$. Now rearranging gives the desired inequality.
\end{proof}

\subsection{More preparations for the proof of Proposition \ref{prop-xprimecount}}

This subsection establishes the key ingredient for the proof of Proposition \ref{prop-xprimecount}, namely Lemma \ref{lemma-ingredient}. We will state and prove this lemma at the end of this subsection, building on the first lemma of this subsection and on the results of the previous subsection. The actual proof of Proposition \ref{prop-xprimecount} in the next subsection will only use Lemma \ref{lemma-ingredient}, but the other results from this subsection and the previous subsection are needed in order to prove Lemma \ref{lemma-ingredient}.

During this entire subsection, we will operate under the following assumption, which reflects the assumption of Proposition \ref{prop-xprimecount}.
\begin{assumption}\label{assumption-preparation}
	We assume that $(x_1,\dots,x_k)\in X_0^k$ is fixed with $x_1+\dots+x_k=(m-1)\cdot \ones$ and such that for every $t\in T_{m-1,k}$ the number of $j\in \lbrace 1,\dots,n\rbrace$ with $(x_1^{(j)},\dots,x_k^{(j)})=t$ is exactly $\tau(t)n$.  Furthermore, let $z_\tau$ be a random variable on $T_{m-1,k}$ with distribution $\tau$.
\end{assumption}
As in the last subsection, to any vector $w=(w_1,\dots,w_k)\in \mathbb{Q}^k$ we can associate a map $\tilde{w}: T_{m-1,k}\to \mathbb{Q}$ by setting $\tilde{w}(a_1,\dots,a_k)=w_1a_1+\dots+w_ka_k$ for every $(a_1,\dots,a_k)\in T_{m-1,k}$.
Further note that for any $x_1',\dots,x_k'\in \lbrace 0,\dots,m-1\rbrace^n$ with $x_1'+\dots+x_k'=(m-1)\cdot \ones$, we have $x_1'^{(j)}+\dots +x_k'^{(j)}=m-1$ for each coordinate $j=1,\dots,n$. Hence $(x_1'^{(j)},\dots,x_k'^{(j)})\in T_{m-1,k}$ for each $j=1,\dots,n$.

\begin{lemma} \label{lemmasubspacecyclecount}
	Fix a subspace $W\su \mathbb{Q}^k$ and a probability distribution $\pi$ on $T_{m-1,k}$. Then there are at most
	\[\exp(\HH(\pi)n-\HH((\tilde{w}(z_\tau))_{w\in W})n)\]
	different $k$-tuples $(x_1',\dots,x_k')$ with $x_1',\dots,x_k'\in \lbrace 0,\dots,m-1\rbrace^n$, $x_1'+\dots+x_k'=(m-1)\cdot \ones$ and
	\[w_1x_1'+\dots+w_kx_k'=w_1x_1+\dots+w_kx_k\]
	for all $w=(w_1,\dots,w_k)\in W$ and such that for every $t\in T_{m-1,k}$ the number of $j\in \lbrace 1,\dots,n\rbrace$ with $(x_1'^{(j)},\dots,x_k'^{(j)})=t$ is exactly $\pi(t)n$.
\end{lemma}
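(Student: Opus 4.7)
The plan is to encode each counted $k$-tuple $(x_1',\dots,x_k')$ as a length-$n$ sequence of elements of $T_{m-1,k}$ and then apply Lemma \ref{lemmaentropycounting2}. Given $(x_1',\dots,x_k')$ with $x_i' \in \{0,\dots,m-1\}^n$ and $x_1' + \dots + x_k' = (m-1)\cdot \ones$, define $t_j := (x_1'^{(j)}, \dots, x_k'^{(j)}) \in T_{m-1,k}$ for $j = 1,\dots,n$. This is a bijection between such $k$-tuples and sequences in $T_{m-1,k}^n$. The multiplicity condition on $(x_1',\dots,x_k')$ becomes: each $t \in T_{m-1,k}$ appears exactly $\pi(t)n$ times in $(t_1,\dots,t_n)$. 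Writing $s_j := (x_1^{(j)}, \dots, x_k^{(j)})$ for the coordinate-wise $k$-tuples of the fixed $(x_1,\dots,x_k)$ from Assumption \ref{assumption-preparation}, the $j$-th coordinate of $w_1 x_1' + \dots + w_k x_k'$ equals $\tilde{w}(t_j)$, so the linear constraint $w_1 x_1' + \dots + w_k x_k' = w_1 x_1 + \dots + w_k x_k$ translates to $\tilde{w}(t_j) = \tilde{w}(s_j)$ for every $j$ and every $w \in W$.

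Next, fix a basis $w_1, \dots, w_r$ of $W$, and define $f : T_{m-1,k} \to \QQ^r$ by $f(t) := (\tilde{w_1}(t), \dots, \tilde{w_r}(t))$. By linearity in $w$, the condition $\tilde{w}(t_j) = \tilde{w}(s_j)$ for all $w \in W$ is equivalent to $f(t_j) = f(s_j)$. Setting $s_j' := f(s_j)$ and regarding $f$ as a map into the finite image $f(T_{m-1,k})$, the count $M$ we wish to bound equals the number of sequences $t_1,\dots,t_n$ in $T_{m-1,k}$ with each $t$ occurring exactly $\pi(t)n$ times and $f(t_j) = s_j'$ for every $j$. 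Applying Lemma \ref{lemmaentropycounting2} with the distribution $\pi$ then yields
\[
M \le \exp\bigl(\HH(\pi)\, n - \HH(f(z_\pi))\, n\bigr),
\]
where $z_\pi$ is a random variable on $T_{m-1,k}$ with distribution $\pi$.

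It remains to identify $\HH(f(z_\pi))$ with $\HH((\tilde{w}(z_\tau))_{w \in W})$. Since $w_1,\dots,w_r$ is a basis of $W$, the value $f(t)$ determines $\tilde w(t)$ for every $w \in W$, so $\HH((\tilde{w}(z_\tau))_{w \in W}) = \HH(f(z_\tau))$; it thus suffices to show $\HH(f(z_\pi)) = \HH(f(z_\tau))$. If $M = 0$ the bound is trivial, so assume $M \ge 1$. The proof of Lemma \ref{lemmaentropycounting2} then shows that the empirical frequency of each $s' \in f(T_{m-1,k})$ in the sequence $s_1',\dots,s_n'$ is exactly $(f_* \pi)(s')$. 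On the other hand, by Assumption \ref{assumption-preparation} the empirical distribution of $s_1,\dots,s_n$ is $\tau$, so the empirical distribution of $s_1',\dots,s_n' = f(s_1),\dots,f(s_n)$ is $f_* \tau$. Hence $f_* \pi = f_* \tau$, and consequently $\HH(f(z_\pi)) = \HH(f(z_\tau)) = \HH((\tilde{w}(z_\tau))_{w \in W})$, giving the stated bound.

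The main (and essentially only) subtlety is the last paragraph: the direct application of Lemma \ref{lemmaentropycounting2} naturally produces an entropy in $z_\pi$, whereas the conclusion demands one in $z_\tau$. The bridging observation is that once the count is nonzero, the constraints force the pushforward distributions $f_*\pi$ and $f_*\tau$ to agree, so the two entropies coincide.
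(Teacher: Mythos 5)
Your proof is correct and follows essentially the same route as the paper's: encode the $k$-tuples as $T_{m-1,k}$-valued sequences, apply Lemma \ref{lemmaentropycounting2} to the map $t \mapsto (\tilde{w}(t))_{w}$, and bridge the resulting entropy in $z_\pi$ to the stated entropy in $z_\tau$ by observing that nonemptiness forces the pushforward distributions $f_*\pi$ and $f_*\tau$ to coincide (the paper states this bridging step as its Claim \ref{claim-H-tau-pi}). Your choice to index $f$ by a basis of $W$ rather than all of $W$ is a harmless notational simplification since both maps have identical level sets.
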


\begin{proof}Let $M$ be the number of $k$-tuples $(x_1',\dots,x_k')$ with the properties listed in the lemma. We need to prove that $M\leq \exp(\HH(\pi)n-\HH((\tilde{w}(z_\tau))_{w\in W})n)$. If $M=0$, this is trivially true, so we can assume that there is at least one $k$-tuple $(x_1',\dots,x_k')$ with the properties listed in the lemma.
	
	Let $z_\pi$ be a random variable on $T_{m-1,k}$ with distribution $\pi$.
	
	\begin{claim}\label{claim-H-tau-pi}
		$\HH((\tilde{w}(z_\pi))_{w\in W})=\HH((\tilde{w}(z_\tau))_{w\in W})$.
	\end{claim}
	\begin{proof} We assumed that there is at least one $k$-tuple $(x_1',\dots,x_k')$ with the properties listed in the lemma, so let $(x_1',\dots,x_k')$ be such a $k$-tuple. Then for each $j\in \lbrace 1,\dots,n\rbrace$ we have $w_1x_1'^{(j)}+\dots+ w_kx_k'^{(j)}=w_1x_1^{(j)}+\dots+ w_kx_k^{(j)}$ for all $w=(w_1,\dots,w_k)\in W$. In other words, for every $j\in \lbrace 1,\dots,n\rbrace$ we have $\tilde{w}(x_1'^{(j)},\dots, x_k'^{(j)})=\tilde{w}(x_1^{(j)},\dots, x_k^{(j)})$ for all $w\in W$. Note that if we choose $j\in \lbrace 1,\dots,n\rbrace$ uniformly at random, then $(x_1'^{(j)},\dots, x_k'^{(j)})\in T_{m-1,k}$ will be distributed according to $\pi$ and $(x_1^{(j)},\dots, x_k^{(j)})\in T_{m-1,k}$ will be distributed according to $\tau$. Hence the distributions of $(\tilde{w}(z_\pi))_{w\in W}$ and of $(\tilde{w}(z_\tau))_{w\in W}$ must agree, and in particular they must have the same entropy. Thus, $\HH((\tilde{w}(z_\pi))_{w\in W})=\HH((\tilde{w}(z_\tau))_{w\in W})$.
	\end{proof}
	
	Note that specifying a $k$-tuple $(x_1',\dots,x_k')$ with $x_1',\dots,x_k'\in \lbrace 0,\dots,m-1\rbrace^n$ and $x_1'+\dots+x_k'=(m-1)\cdot \ones$ is the same as specifying $t_1,\dots,t_n\in T_{m-1,k}$ and setting $(x_1'^{(j)},\dots, x_k'^{(j)})=t_j$ for $1\leq j\leq n$.
	As we saw in the proof of the last claim, having
	\[w_1x_1'+\dots+w_kx_k'=w_1x_1+\dots+w_kx_k\]
	for all $w=(w_1,\dots,w_k)\in W$ is equivalent to having $\tilde{w}(x_1'^{(j)},\dots, x_k'^{(j)})=\tilde{w}(x_1^{(j)},\dots, x_k^{(j)})$ for all $w\in W$ and $j=1,\dots,n$. Hence the sequence $t_1,\dots,t_n\in T_{m-1,k}$ must satisfy $\tilde{w}(t_j)=\tilde{w}(x_1^{(j)},\dots, x_k^{(j)})$ for all $w\in W$ and $j=1,\dots,n$.
	Therefore, the number of $k$-tuples $(x_1',\dots,x_k')$ with the conditions in Lemma \ref{lemmasubspacecyclecount} equals the number of sequences $t_1,\dots, t_n$ of elements of $T_{m-1,k}$ in which each element $t\in T_{m-1,k}$ occurs exactly $\pi(t)n$ times and such that $\tilde{w}(t_j)=\tilde{w}(x_1^{(j)},\dots, x_k^{(j)})$ for all $w\in W$ and $1\leq j\leq n$. By Lemma \ref{lemmaentropycounting2} applied to the map $f:T_{m-1,k}\to \QQ^{\vert W\vert}$ given by $f(t)=(\tilde{w}(t))_{w\in W}$, this number is at most
	\[\exp(\HH(z_\pi)n-\HH((\tilde{w}(z_\pi))_{w\in W})n)=\exp(\HH(\pi)n-\HH((\tilde{w}(z_\tau))_{w\in W})n).\]
	Here we used Claim \ref{claim-H-tau-pi}. This finishes the proof of Lemma \ref{lemmasubspacecyclecount}.
\end{proof}

Now, we can finally prove Lemma \ref{lemma-ingredient}, which will be the key ingredient for the proof of Proposition \ref{prop-xprimecount}.

\begin{lemma}\label{lemma-ingredient} Let $(x_1,\dots,x_k)\in X_0^k$ be fixed as in Assumption \ref{assumption-preparation}, and furthermore let $W\su V_k$ be a fixed subspace with $(1,\dots,1,-(k-1))\in W\su \QQ^k$. Then there are at most
	\[\exp\left((k-1-\dim W)\cdot \frac{\HH(\tau)-\HH(\nu)}{k-2}\cdot n + (D_{m,k}+m^k)\log n\right)\]
	different $k$-tuples $(x_1',\dots,x_k')\in X_0^k$  with $x_1'+\dots+x_k'=(m-1)\cdot \ones$ and
	\[w_1x_1'+\dots+w_kx_k'=w_1x_1+\dots+w_kx_k\]
	for all $w=(w_1,\dots,w_k)\in W$.
\end{lemma}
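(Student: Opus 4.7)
The plan is to reduce to Lemma \ref{lemmasubspacecyclecount} by summing over all possible ``types'' of $(x_1',\dots,x_k')$, then invoke the entropy lower bound from Corollary \ref{corollarysubspaceentropylowerbound} together with the near-optimality of $\HH(\tau)$ from Proposition \ref{proprounding}.

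First I would partition the set of $k$-tuples $(x_1',\dots,x_k') \in X_0^k$ (satisfying the stated linear constraints) according to the probability distribution $\pi$ on $T_{m-1,k}$ that records the frequencies: $\pi(t) = |\{j : (x_1'^{(j)},\dots,x_k'^{(j)}) = t\}|/n$. Every such $\pi$ has values that are integer multiples of $1/n$, so the number of possible distributions $\pi$ is at most $n^{|T_{m-1,k}|} \le n^{m^k}$. Moreover, since each $x_i' \in X_0$ has exactly $\nu(a)n$ coordinates equal to $a$ for every $a$, each of the $k$ coordinate projections of $\pi$ equals $\nu$. By the last bullet of Proposition \ref{proprounding}, this forces
\[\HH(\pi) \le \HH(\tau) + \frac{D_{m,k}\log n}{n}.\]

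Next, for each such type $\pi$, Lemma \ref{lemmasubspacecyclecount} applied with the given $W$ gives that the number of $k$-tuples of type $\pi$ satisfying the linear constraints is at most $\exp(\HH(\pi)n - \HH((\tilde w(z_\tau))_{w \in W})n)$. Since $W \subseteq V_k$ and $(1,\dots,1,-(k-1)) \in W$, Corollary \ref{corollarysubspaceentropylowerbound} yields
\[\HH((\tilde w(z_\tau))_{w\in W}) \ge \HH(\nu) + \frac{\dim W - 1}{k-2}\bigl(\HH(\tau) - \HH(\nu)\bigr).\]
Combining these two estimates, the number of $k$-tuples of type $\pi$ is at most
\[\exp\!\left(\bigl(\HH(\tau)-\HH(\nu)\bigr)n - \frac{\dim W - 1}{k-2}\bigl(\HH(\tau)-\HH(\nu)\bigr) n + D_{m,k}\log n\right) = \exp\!\left(\frac{k-1-\dim W}{k-2}\bigl(\HH(\tau)-\HH(\nu)\bigr)n + D_{m,k}\log n\right).\]

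Finally I would sum this uniform bound over the at most $n^{m^k}$ possible types $\pi$, absorbing the factor $n^{m^k}$ into the $(D_{m,k}+m^k)\log n$ term in the exponent to obtain the claimed bound. There is no real obstacle here beyond bookkeeping: the substance of the proof is already contained in Corollary \ref{corollarysubspaceentropylowerbound} (which uses the inclusion $(1,\dots,1,-(k-1)) \in W$ crucially) and in the rounding property of $\tau$ from Proposition \ref{proprounding} that pins down $\HH(\pi)$; the only step requiring a small care is noting that the marginal condition on $\pi$ in each coordinate really does follow from $x_i' \in X_0$, which is what lets us apply the near-optimality bound for $\HH(\tau)$ to every $\pi$ that actually arises.
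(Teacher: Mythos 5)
Your proof is correct and follows essentially the same route as the paper's: partition by the empirical type $\pi$, bound the number of types by $n^{m^k}$, use the last bullet of Proposition \ref{proprounding} to control $\HH(\pi)$, apply Lemma \ref{lemmasubspacecyclecount} for each type, and invoke Corollary \ref{corollarysubspaceentropylowerbound} for the entropy lower bound. The only cosmetic difference is the order in which the pieces are assembled.
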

\begin{proof}
	Note that by Corollary \ref{corollarysubspaceentropylowerbound} we have
	\[\HH((\tilde{w}(z_\tau))_{w\in W})\geq \HH(\nu)+\frac{\dim W-1}{k-2}(\HH(\tau)-\HH(\nu)).\]
	
	For $x_1',\dots, x_k'\in X_0\su \lbrace 0,\dots,m-1\rbrace^k$ with $x_1'+\dots+x_k'=(m-1)\cdot \ones$ we have $(x_1'^{(j)},\dots,x_k'^{(j)})\in T_{m-1,k}$ for every $j\in \lbrace 1,\dots,n\rbrace$. So for every $k$-tuple $(x_1',\dots,x_k')\in X_0^k$ with the conditions in the lemma, we can define a probability distribution $\pi$ on $T_{m-1,k}$ by considering $(x_1'^{(j)},\dots,x_k'^{(j)})\in T_{m-1,k}$ where $j\in \lbrace 1,\dots,n\rbrace$ is chosen uniformly at random. Note that then for every $t\in T_{m-1,k}$ the number of $j\in \lbrace 1,\dots,n\rbrace$ with $(x_1'^{(j)},\dots,x_k'^{(j)})=t$ is exactly $\pi(t)n$.
	
	The projection of the probability distribution $\pi$ on $T_{m-1,k}$ to the first coordinate can be described by considering $x_1'^{(j)}$ where $j\in \lbrace 1,\dots,n\rbrace$ is chosen uniformly at random. Since $x_1'\in X_0$, choosing a coordinate $x_1'^{(j)}$ of $x_1'$ uniformly at random gives the probability distribution $\nu$ on $\lbrace 0,\dots,m-1\rbrace$. Hence the projection of the probability distribution $\pi$  to the first coordinate is equal to $\nu$. Analogously, we can see that all the other coordinate projections of $\pi$ are equal to $\nu$ as well. Hence, by the last condition in Proposition \ref{proprounding}, we must have $\HH(\pi)\leq \HH(\tau) + (D_{m,k}\log n)/n.$
	
	Clearly, every probability in $\pi$ is an integer multiple of $1/n$. Hence the number of possibilities for the probability distribution $\pi$ on $T_{m-1,k}$ is at most $n^{\vert T_{m-1,k}\vert}\leq n^{m^k}$.
	If we fix one of these probability distributions $\pi$, then by Lemma \ref{lemmasubspacecyclecount} there are at most
	\[\exp(\HH(\pi)n-\HH((\tilde{w}(z_\tau))_{w\in W})n)\leq \exp\left(\HH(\tau)n + D_{m,k}\log n-\HH(\nu)n-\frac{\dim W-1}{k-2}(\HH(\tau)-\HH(\nu))n\right).\]
	different $k$-tuples $(x_1',\dots,x_k')\in X_0^k$ with the conditions in Lemma \ref{lemma-ingredient} that give rise to this particular probability distribution $\pi$ on $T_{m-1,k}$. Thus, all in all the number of $k$-tuples $(x_1',\dots,x_k')\in X_0^k$ with the conditions in Lemma \ref{lemma-ingredient} is at most
	\begin{multline*}
	n^{m^k}\exp\left(\HH(\tau)n + D_{m,k}\log n-\HH(\nu)n-\frac{\dim W-1}{k-2}(\HH(\tau)-\HH(\nu))n\right)\\
	=\exp\left(\frac{k-1-\dim W}{k-2}(\HH(\tau)-\HH(\nu))n + (D_{m,k}+m^k)\log n\right)
	\end{multline*}
	This finishes the proof of Lemma \ref{lemma-ingredient}.\end{proof}

\subsection{Proof of Proposition \ref{prop-xprimecount}}

\begin{proof}[Proof of Proposition \ref{prop-xprimecount}] Recall that $(x_1,\dots,x_k)\in X_0^k$ with $x_1+\dots+x_k=(m-1)\cdot \ones$ is fixed such that for every $t\in T_{m-1,k}$ the number of $j\in \lbrace 1,\dots,n\rbrace$ with $(x_1^{(j)},\dots,x_k^{(j)})=t$ is exactly $\tau(t)n$. So $(x_1,\dots,x_k)\in X_0^k$ satisfies Assumption \ref{assumption-preparation}. Fix $d$ with $1\leq d\leq k-2$. We claim that there are at most
	\[ km^{2k^2}n^{D_{m,k}+m^k}\exp\left(\frac{\HH(\tau)-\HH(\nu)}{k-2}\cdot dn\right)\]
	different $k$-tuples $(x_1',x_2',...,x_k')\in X_0^k$ satisfying $x_1'+\dots+x_k'=(m-1)\cdot \ones$, $\dim\spn_\QQ(x_1-x_1',\dots,x_k-x_k')=d$,
	and $x_i=x_i'$ for some $i$. Since $km^{2k^2} \le n^{m^k}$ for $n$ sufficiently large, this implies the desired bound.
	
	Let us focus on those $(x_1',x_2',...,x_k')\in X_0^k$ with $x_k=x_k'$. We will show that there are at most
	\[ m^{2k^2}n^{D_{m,k}+m^k}\exp\left(\frac{\HH(\tau)-\HH(\nu)}{k-2}\cdot dn\right)\]
	different $k$-tuples $(x_1',x_2',...,x_k')\in X_0^k$ with $x_1'+\dots+x_k'=(m-1)\cdot \ones$, $\dim\spn_\QQ(x_1-x_1',\dots,x_k-x_k')=d$, and $x_k=x_k'$. Analogously, we get the same upper bound when replacing the condition $x_k=x_k'$ with $x_i=x_i'$ for any fixed $1\leq i\leq k-1$ (note that Corollary \ref{corollarysubspaceentropylowerbound} and Lemma \ref{lemma-ingredient} have the assumption $(1,\dots,1,-(k-1))\in W$, but they can be proved analogously if one replaces $(1,\dots,1,-(k-1))$ by one of its permutations). By a union bound, this proves the claim above, and thus Proposition \ref{prop-xprimecount}.
	
	In order to simplify notation, let us call a  $k$-tuple $(x_1',x_2',...,x_k')\in X_0^k$ \emph{relevant} if  $x_1'+\dots+x_k'=(m-1)\cdot \ones$, $\dim\spn_\QQ(x_1-x_1',\dots,x_k-x_k')=d$, and $x_k=x_k'$. Our goal is to prove that there are at most
	\[m^{2k^2}n^{D_{m,k}+m^k}\exp\left(\frac{\HH(\tau)-\HH(\nu)}{k-2}\cdot dn\right)\]
	relevant $k$-tuples $(x_1',x_2',...,x_k')\in X_0^k$.
	
	For every relevant $k$-tuple $(x_1',x_2',...,x_k')\in X_0^k$, let us consider the subspace $W^{*}\su\QQ^k$ given by
	\[W^{*}=\lbrace (w_1,\dots,w_k)\in \QQ^k\mid w_1\cdot (x_1-x_1')+\dots+w_k\cdot (x_k-x_k')=0\rbrace.\]
	
	\begin{claim}\label{claim-dimension-Wstar}
		$\dim_{\QQ}W^{*}=k-d$.
	\end{claim}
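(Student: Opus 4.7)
The claim is a direct application of the rank--nullity theorem. The plan is to view $W^{*}$ as the kernel of an explicit linear map and read off its dimension from the rank.

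More precisely, I would introduce the linear map
\[
\phi : \QQ^k \longrightarrow \QQ^n, \qquad \phi(w_1,\dots,w_k) = w_1(x_1-x_1') + \dots + w_k(x_k-x_k').
\]
By the very definition of $W^{*}$, we have $W^{*} = \ker \phi$. On the other hand, the image of $\phi$ is precisely the $\QQ$-span of the vectors $x_1-x_1',\dots,x_k-x_k'$ inside $\QQ^n$, since $\phi$ applied to the standard basis vectors of $\QQ^k$ produces exactly $x_1-x_1',\dots,x_k-x_k'$. Thus $\dim_{\QQ}\operatorname{im}\phi = \dim\spn_{\QQ}(x_1-x_1',\dots,x_k-x_k') = d$ by the assumption that $(x_1',\dots,x_k')$ is relevant.

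Applying rank--nullity to $\phi$ then gives
\[
\dim_{\QQ} W^{*} = \dim_{\QQ} \ker\phi = k - \dim_{\QQ}\operatorname{im}\phi = k - d,
\]
which is the claim. There is no real obstacle here; the only thing to check is that the rank of $\phi$ is genuinely $d$, which follows immediately from the hypothesis on the dimension of the span of $x_1-x_1',\dots,x_k-x_k'$.
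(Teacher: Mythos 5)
Your proof is correct and follows essentially the same approach as the paper: both identify $W^{*}$ as the kernel (null-space) of the linear map whose matrix has columns $x_1-x_1',\dots,x_k-x_k'$, observe that this map has rank $d$, and conclude by rank--nullity that $\dim_{\QQ} W^{*} = k-d$.
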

	\begin{proof} Note that $W^{*}$ is the null-space of the $(n\times k)$-matrix with columns $x_1-x_1',\dots, x_k-x_k'$. Since $\dim\spn_\QQ(x_1-x_1',\dots,x_k-x_k')=d$, this matrix has rank $d$. Therefore the dimension of its null-space is $k-d$.\end{proof}
	
	Note that $W^{*}$ depends on the relevant $k$-tuple $(x_1',x_2',...,x_k')\in X_0^k$ and different relevant $k$-tuples can give different subspaces $W^{*}\su \mathbb{Q}^k$. However, the following claim gives an upper bound for the total number of different subspaces $W^{*}$ that can occur.
	
	\begin{claim}\label{claim-possible-Wstar}
		For fixed $(x_1,\dots,x_k)$ and fixed $d$, the number of possible subspaces $W^{*}\su \mathbb{Q}^k$ is at most $m^{2k^2}$.
	\end{claim}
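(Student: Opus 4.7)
The plan is to observe that $W^{*}$ is the right null space of the $n\times k$ matrix $M$ whose $i$-th column is $x_i-x_i'$, and hence $W^{*}$ is determined by (in fact, is the orthogonal complement of) the row space of $M$. So it suffices to bound the number of possible row spaces.

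Each row of $M$ is a vector in $\mathbb{Q}^k$ of the form $((x_1-x_1')^{(j)},\dots,(x_k-x_k')^{(j)})$ for some $j\in\{1,\dots,n\}$. Since $x_i,x_i'\in X_0\su\{0,\dots,m-1\}^n$, each entry of such a row lies in $\{-(m-1),\dots,m-1\}$. Thus every row of $M$ belongs to the finite set $S=\{-(m-1),\dots,m-1\}^k$, which has cardinality $(2m-1)^k$. (One could further restrict to rows summing to $0$ with $k$-th coordinate $0$, but this cruder bound already suffices.)

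By Claim \ref{claim-dimension-Wstar} the column rank of $M$ equals $d$, hence the row rank of $M$ also equals $d$. In particular, the row space of $M$ is spanned by some $d$ linearly independent rows chosen from $S$. The number of ordered $d$-tuples of vectors in $S$ is at most $\sabs{S}^d = (2m-1)^{kd}$, and distinct row spaces give distinct choices of subspace, so the number of possible row spaces (equivalently, possible $W^{*}$) is at most $(2m-1)^{kd}$. Since $d\leq k-2\leq k$, this is bounded by $(2m-1)^{k^2}$.

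Finally, because $2m-1\leq m^2$ (which is just $(m-1)^2\geq 0$), we obtain $(2m-1)^{k^2}\leq m^{2k^2}$, yielding the desired bound. There is no real obstacle here; the only substantive point is the observation that $W^{*}$ is determined by the row space of $M$, so the counting is reduced to choosing at most $k$ vectors from a set of size polynomial in $m$.
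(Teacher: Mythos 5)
Your proof is correct and follows essentially the same approach as the paper: identify $W^*$ as the null-space of the $n\times k$ matrix with columns $x_i-x_i'$, observe that its rank is $d$, and count the possible subspaces by counting $d$-tuples of rows from a finite set. The only cosmetic difference is that you bound the set of possible rows by $\{-(m-1),\dots,m-1\}^k$ of size $(2m-1)^k$ and then use $2m-1\le m^2$, whereas the paper bounds each row as a difference $t-t'$ with $t,t'\in T_{m-1,k}$, giving the set size $|T_{m-1,k}|^2\le m^{2k}$; both yield the same final bound $m^{2k^2}$.
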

	\begin{proof}Recall that every possible $W^{*}$ occurs as the null-space of an $(n\times k)$-matrix $A$ with columns ${x_1-x_1',\dots, x_k-x_k'}$ for some relevant $k$-tuple $(x_1',\dots,x_k')$. Each row of this matrix is of the form
		\[(x_1^{(j)}-x_1'^{(j)},\dots,x_k^{(j)}-x_k'^{(j)})=(x_1^{(j)},\dots,x_k^{(j)})-(x_1'^{(j)},\dots,x_k'^{(j)})\]
		for some $1\leq j\leq n$. Note that $(x_1^{(j)},\dots,x_k^{(j)}), (x_1'^{(j)},\dots,x_k'^{(j)})\in T_{m-1,k}$, so each row of $A$ is a vector from the set $\lbrace t-t'\mid t,t'\in T_{m-1,k}\rbrace$. Furthermore, since
		\[\operatorname{rank}A=\dim\spn_\QQ(x_1-x_1',\dots,x_k-x_k')=d,\]
		we can select $d$ linearly independent rows of the matrix $A$ and the matrix $A'$ formed by these $d$ rows has the same null-space as $A$. Hence $W^{*}$ occurs as the null-space of a $(d\times k)$-matrix $A'$ such that each row of $A'$ is from the set $\lbrace t-t'\mid t,t'\in T_{m-1,k}\rbrace$. Since this set has size at most $\vert T_{m-1,k}\vert^2\leq (m^k)^2=m^{2k}$, there are at most $(m^{2k})^d=m^{2kd}\leq m^{2k^2}$ possibilities to form such a matrix $A'$. Hence there are at most $m^{2k^2}$ possible subspaces $W^{*}\su \mathbb{Q}^k$.
	\end{proof}
	
	Recall that we defined the hyperplane $V_k=\lbrace (v_1,\dots,v_k)\in \QQ^k\mid v_1+\dots+v_k=0\rbrace \su \QQ^k$.
	For each relevant $k$-tuple $(x_1',\dots,x_k')\in X_0^k$, let us consider the subspace $W\su \QQ^k$ given by
	\[W=W^{*}\cap V_k.\]
	Clearly, $W\su V_k$. By Claim \ref{claim-possible-Wstar} there are at most $m^{2k^2}$ possibilities for $W^{*}$, hence there are also at most $m^{2k^2}$ possibilities for $W$.
	
	\begin{claim}$\dim_{\QQ}W=k-d-1$.
	\end{claim}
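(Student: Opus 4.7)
The plan is to use the standard dimension formula $\dim(W^{*}\cap V_k)=\dim W^{*}+\dim V_k-\dim(W^{*}+V_k)$ together with the already-established values $\dim W^{*}=k-d$ (Claim \ref{claim-dimension-Wstar}) and $\dim V_k=k-1$. Since both subspaces sit in $\QQ^k$, we will have $\dim W=k-d-1$ exactly when $W^{*}+V_k=\QQ^k$, equivalently when $W^{*}\not\subseteq V_k$.

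So the only thing to verify is that $W^{*}$ contains some vector whose coordinates do not sum to $0$. The natural candidate is the all-ones vector $(1,\dots,1)$. Evaluating the defining condition of $W^{*}$ on it gives
\[1\cdot (x_1-x_1')+\dots+1\cdot (x_k-x_k')=(x_1+\dots+x_k)-(x_1'+\dots+x_k')=(m-1)\cdot\ones-(m-1)\cdot\ones=0,\]
where I used the hypothesis that both $(x_1,\dots,x_k)$ and the relevant $k$-tuple $(x_1',\dots,x_k')$ sum to $(m-1)\cdot\ones$ in $\ZZ^n$. Hence $(1,\dots,1)\in W^{*}$, while clearly $(1,\dots,1)\notin V_k$ since its coordinate sum equals $k\neq 0$. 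This gives the strict containment $V_k\subsetneq W^{*}+V_k\subseteq \QQ^k$, which forces $W^{*}+V_k=\QQ^k$.

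Plugging into the dimension formula yields
\[\dim_\QQ W=\dim W^{*}+\dim V_k-\dim(W^{*}+V_k)=(k-d)+(k-1)-k=k-d-1,\]
as required. There is no real obstacle here: the key point is just the observation that the all-ones vector lies in $W^{*}$ precisely because $(1,\dots,1,-(k-1))$ and (implicitly) the condition $x_1+\dots+x_k=x_1'+\dots+x_k'$ guarantee that the two $k$-tuples sum to the same thing, and this all-ones vector is transverse to $V_k$.
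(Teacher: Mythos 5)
Your proof is correct and takes essentially the same approach as the paper: both hinge on the observation that $(1,\dots,1)\in W^{*}$ (because $x_1+\dots+x_k=x_1'+\dots+x_k'$) while $(1,\dots,1)\notin V_k$, so intersecting $W^{*}$ with the hyperplane $V_k$ drops the dimension by exactly one. The paper phrases the final step via the hyperplane property directly rather than the inclusion-exclusion dimension formula, but this is the same computation.
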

	\begin{proof}Note that we have
		\[1\cdot (x_1-x_1')+\dots+1\cdot (x_k-x_k')=(x_1+\dots+x_k)-(x_1'+\dots+x_k')=(m-1)\cdot\ones-(m-1)\cdot\ones=0,\]
		hence $(1,\dots,1)\in W^{*}$. Clearly $(1,\dots,1)\not\in V_k$, so we have $W^{*}\not\su V_k$. Since $V_k\su \QQ^k$ is a hyperplane (that means $\dim V_k=k-1$), this implies $\dim W=\dim(W^{*}\cap V_k)=\dim W^{*}-1$. By Claim \ref{claim-dimension-Wstar} this yields $\dim W=k-d-1$, as desired.
	\end{proof}
	
	For any relevant $k$-tuple $(x_1',\dots,x_k')$ we have $x_k-x_k'=0$ and therefore
	\[1\cdot (x_1-x_1')+\dots+1\cdot (x_{k-1}-x_{k-1}')-(k-1)\cdot (x_k-x_k')=1\cdot (x_1-x_1')+\dots+1\cdot (x_k-x_k')=0.\]
	Hence $(1,\dots,1,-(k-1))\in W^{*}$, and by $(1,\dots,1,-(k-1))\in V_k$ we obtain ${(1,\dots,1,-(k-1))\in W}$ for every relevant $k$-tuple $(x_1',\dots,x_k')$.
	Furthermore, if $(x_1',\dots,x_k')\in X_0^k$ is a relevant $k$-tuple giving rise to the subspace $W\su V_k$, then for every $(w_1,\dots,w_k)\in W$ we have $w_1\cdot (x_1-x_1')+\dots+w_k\cdot (x_k-x_k')=0$ and therefore
	\[w_1x_1'+\dots+w_kx_k'=w_1x_1+\dots+w_kx_k.\]
	Also recall that every relevant $k$-tuple $(x_1',\dots,x_k')\in X_0^k$ must satisfy $x_1'+\dots+x_k'=(m-1)\cdot \ones$.
	Hence Lemma \ref{lemma-ingredient} implies that for every possible $W\su V_k$, there can be at most
	\[\exp\left((k-1-\dim W)\cdot \frac{\HH(\tau)-\HH(\nu)}{k-2}\cdot n + (D_{m,k}+m^k)\log n\right)=n^{D_{m,k}+m^k}\exp\left(d\cdot \frac{\HH(\tau)-\HH(\nu)}{k-2}\cdot n\right)\]
	different relevant $k$-tuples $(x_1',\dots,x_k')\in X_0^k$ giving rise to this subspace $W$. We also saw above that in total there are at most $m^{2k^2}$ possibilities for $W$. Hence all in all, the number of relevant $k$-tuples $(x_1',\dots,x_k')\in X_0^k$ is at most
	\[m^{2k^2}n^{D_{m,k}+m^k}\exp\left(\frac{\HH(\tau)-\HH(\nu)}{k-2}\cdot dn\right),\] 
	as desired. This completes the proof of Proposition \ref{prop-xprimecount}.\end{proof}

\section{Proof of Proposition \ref{proprounding} and of Lemmas \ref{lemmanupk} and \ref{lemma-linearalgebra}}
\label{sect-lemmas1}

\subsection{Proof of Lemma \ref{lemmanupk}}
\begin{proof}[Proof of Lemma \ref{lemmanupk}]Recall that $0<\gamma_{m,k}<1$ was chosen to minimize
\[\frac{1+\gamma+\dots+\gamma^{m-1}}{\gamma^{(m-1)/k}}=\sum_{i=0}^{m-1}\gamma^{i-(m-1)/k}.\]
Hence the derivative of this function must be zero at the point $\gamma=\gamma_{m,k}$, so
\[\sum_{i=0}^{m-1}\left(i-\frac{m-1}{k}\right)\gamma_{m,k}^{i-1-(m-1)/k}=0.\]
Multiplying by $\gamma_{m,k}^{1+(m-1)/k}$  and rearranging yields
\begin{equation}\label{eq-gammapk}
\sum_{i=0}^{m-1}i\gamma_{m,k}^{i}=\frac{m-1}{k}\sum_{i=0}^{m-1}\gamma_{m,k}^{i}.
\end{equation}
Now, recall that we defined the probability distribution $\nu_{m,k}$ on $\lbrace 0,\dots, m-1\rbrace$ by
\[\nu_{m,k}(i)=\frac{\gamma_{m,k}^i}{1+\gamma_{m,k}+\dots+\gamma_{m,k}^{m-1}}.\]
Hence by (\ref{eq-gammapk}) we obtain
\[\E(\nu_{m,k})=\sum_{i=0}^{m-1} i\nu_{m,k}(i)=\frac{\sum_{i=0}^{m-1} i\gamma_{m,k}^i}{1+\gamma_{m,k}+\dots+\gamma_{m,k}^{m-1}}=\frac{m-1}{k}\cdot\frac{\sum_{i=0}^{m-1} \gamma_{m,k}^i}{1+\gamma_{m,k}+\dots+\gamma_{m,k}^{m-1}}=\frac{m-1}{k},\]
as desired. Using this, we get
\begin{multline*}
\HH(\nu_{m,k})=-\sum_{i=0}^{m-1}\nu_{m,k}(i)\log \nu_{m,k}(i)=-\sum_{i=0}^{m-1}\nu_{m,k}(i)(i\log \gamma_{m,k}- \log(1+\gamma_{m,k}+\dots+\gamma_{m,k}^{m-1}))\\
=-\left(\log \gamma_{m,k}\cdot \sum_{i=0}^{m-1}i\nu_{m,k}(i)\right)+\log(1+\gamma_{m,k}+\dots+\gamma_{m,k}^{m-1})=
-\log \gamma_{m,k}\cdot \frac{m-1}{k}+\log(1+\gamma_{m,k}+\dots+\gamma_{m,k}^{m-1})\\
=\log\left(\frac{1+\gamma+\dots+\gamma^{m-1}}{\gamma^{(m-1)/k}}\right)=\log \Gamma_{m,k},
\end{multline*}
also as desired.\end{proof}

\subsection{Proof of Proposition \ref{proprounding}}
Before going into the proof of Proposition \ref{proprounding}, we will first prove two easy lemmas.

\begin{lemma}\label{lemma-expectation-marginal}
If $\tau$ is an $S_k$-symmetric distribution on $T_{m-1,k}$, then the marginal $\mu(\tau)$ of $\tau$ has expectation $\E(\mu(\tau))=(m-1)/k$.
\end{lemma}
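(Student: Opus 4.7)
The plan is to use the fact that every element $t=(a_1,\ldots,a_k)\in T_{m-1,k}$ satisfies $a_1+\cdots+a_k=m-1$ by definition, combined with the $S_k$-symmetry of $\tau$, which forces all $k$ coordinate projections to coincide with $\mu(\tau)$.

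Concretely, I would let $z=(z^{(1)},\ldots,z^{(k)})$ be a random variable on $T_{m-1,k}$ distributed according to $\tau$. Since $z\in T_{m-1,k}$ with probability $1$, we have the pointwise identity $z^{(1)}+\cdots+z^{(k)}=m-1$. Taking expectations and using linearity gives
\[\E[z^{(1)}]+\E[z^{(2)}]+\cdots+\E[z^{(k)}]=m-1.\]
Because $\tau$ is $S_k$-symmetric, the distribution of $z^{(i)}$ is the marginal $\mu(\tau)$ for every $i\in\{1,\ldots,k\}$, as noted in the discussion preceding the statement. In particular, $\E[z^{(i)}]=\E(\mu(\tau))$ for each $i$, and substituting this into the previous display yields $k\cdot\E(\mu(\tau))=m-1$, so $\E(\mu(\tau))=(m-1)/k$.

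There is no real obstacle here; the only thing to be careful about is invoking the already-established fact that the marginal of an $S_k$-symmetric distribution on $T_{m-1,k}$ agrees with the projection to any individual coordinate, which is exactly how $\mu(\tau)$ was defined in the introduction.
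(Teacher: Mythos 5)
Your proof is correct and follows essentially the same argument as the paper: introduce a random variable $z$ on $T_{m-1,k}$ with distribution $\tau$, observe that each coordinate projection has distribution $\mu(\tau)$ by $S_k$-symmetry, and take expectations of the pointwise identity $z^{(1)}+\cdots+z^{(k)}=m-1$ to conclude $k\E(\mu(\tau))=m-1$. There is no substantive difference between the two.
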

\begin{proof}Let $z_\tau$ be a random variable on $T_{m-1,k}$ with distribution $\tau$. Note that the each of the $k$ individual coordinates of $z_\tau$ has distribution $\mu(\tau)$. Hence each coordinate has expectation $\E(\mu(\tau))$, so the sum of the $k$ coordinates of $z_\tau$ has expectation $k\E(\mu(\tau))$. On the other hand, the sum of the coordinates of $z_\tau$ is always equal to $m-1$, hence $k\E(\mu(\tau))=m-1$, which means that $\E(\mu(\tau))=(m-1)/k$.
\end{proof}

Let $U$ be the vector space formed by all functions $u:T_{m-1,k}\to \mathbb{R}$ satisfying $u(t)=u(t^\sigma)$ for all $t\in T_{m-1,k}$ and $\sigma\in S_k$ (i.e. $u$ is $S_k$-symmetric), $\sum_{t\in T_{m-1,k}}u(t)=0$ as well as
\begin{equation}\label{eq-property-u}
\sum_{\substack{a_2,\dots,a_k\in \lbrace 0,\dots,m-1\rbrace\\a+a_2+\dots+a_k=m-1}}u(a,a_2,\dots,a_k)=0
\end{equation}
for every $a\in \lbrace 0,\dots,m-1\rbrace$. Note that $U$ can be interpreted as a subspace of $\mathbb{R}^{\vert T_{m-1,k}\vert}$ and that $U\su \mathbb{R}^{\vert T_{m-1,k}\vert}$ has a basis consisting of points with integer coordinates. Hence there exists some $d_{m,k}>0$ such that for each $u\in U$ we can find $u'\in U$ such that $u'$ has integer coordinates and $\Vert u-u'\Vert_1\leq d_{m,k}$. Basically, $d_{m,k}$ is the maximum distance of any point in $U$ from its closest integer lattice point in $U$. Note that $d_{m,k}>0$ is a constant that only depends on $U$ and its integer lattice, hence it ultimately only depends on $m$ and $k$ (and not on $n$).

\begin{lemma}\label{lemma-lattice-U}For every integer $n$ and every $u\in U$, we can find $u'\in U$ such that $\Vert u-u'\Vert_1\leq d_{m,k}/n$ and all coordinates of $u'\in U\su \mathbb{R}^{\vert T_{m-1,k}\vert}$ are integer multiples of $1/n$.
\end{lemma}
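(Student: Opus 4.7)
The plan is to reduce the statement to the defining property of $d_{m,k}$ by a simple scaling argument. The key observation is that $U$ is a linear subspace of $\mathbb{R}^{\vert T_{m-1,k}\vert}$, so multiplication by the integer $n$ maps $U$ to itself, and conversely division by $n$ sends points of $U$ with integer coordinates to points of $U$ whose coordinates are integer multiples of $1/n$.

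Concretely, given $u \in U$, I would first consider the scaled vector $nu$, which lies in $U$ because $U$ is closed under scalar multiplication. By the choice of $d_{m,k}$ (the maximum $\ell_1$ distance from a point of $U$ to its nearest point in the integer lattice of $U$), there exists $v \in U$ with integer coordinates such that $\Vert nu - v \Vert_1 \leq d_{m,k}$. I would then set $u' = v/n$. Since $U$ is a subspace and $v \in U$, we have $u' \in U$; and each coordinate of $u'$ is an integer multiple of $1/n$ as required.

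Finally, the norm estimate follows by pulling the factor of $n$ out of the $\ell_1$ norm:
\[\Vert u - u' \Vert_1 = \left\Vert u - \frac{v}{n} \right\Vert_1 = \frac{1}{n}\Vert nu - v \Vert_1 \leq \frac{d_{m,k}}{n}.\]

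There is really no obstacle here; the entire content of the lemma is the scaling invariance of the lattice-approximation constant $d_{m,k}$, which was already extracted in the paragraph preceding the lemma statement. The only thing to be careful about is verifying that $U$ is genuinely a subspace (so both $nu$ and $v/n$ stay in $U$), which is immediate from the definition since the symmetry condition, the total-sum condition, and the marginal conditions in \eqref{eq-property-u} are all homogeneous linear constraints.
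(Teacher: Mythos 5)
Your proof is correct and follows exactly the paper's argument: scale $u$ by $n$, apply the defining property of $d_{m,k}$ to get an integer-coordinate point of $U$ at $\ell_1$-distance at most $d_{m,k}$, and divide back by $n$. The extra paragraph checking that $U$ is a linear subspace is a sensible sanity check, though the paper takes it for granted since it is immediate from the defining (homogeneous linear) constraints on $U$.
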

\begin{proof}Let us apply the definition of $d_{m,k}$ to the point $n\cdot u\in U$. There exists a point $\tilde{u}\in U$ with integer coordinates and $\Vert n\cdot u-\tilde{u}\Vert_1\leq d_{m,k}$. Now set $u'=\tilde{u}/n\in U$. Then all coordinates of $u'$ are integer multiples of $1/n$ and furthermore
\[\Vert u-u'\Vert_1=\frac{1}{n}\Vert n\cdot u-\tilde{u}\Vert_1 \leq \frac{d_{m,k}}{n},\]
as desired.\end{proof}

Now, we are ready for the proof of Proposition \ref{proprounding}.

\begin{proof}[Proof of Proposition \ref{proprounding}] Recall that $\nu_{m,k}$ is a probability distribution on $\lbrace 0,\dots,m-1\rbrace$ and by Lemma \ref{lemmanupk} we have $\E(\nu_{m,k})=(m-1)/k$ and $\HH(\nu_{m,k})=\log \Gamma_{m,k}$. Furthermore, by Theorem \ref{theo-marginal}, we can fix an $S_k$-symmetric probability distribution $\tau_{m,k}$ on $T_{m-1,k}$ with marginal $\mu(\tau_{m,k})=\nu_{m,k}$ and with $\tau_{m,k}(t)>0$ for every $t\in T_{m-1,k}$. Set $0<c_{m,k}<1$ to be the minimum of the finitely many values $\tau_{m,k}(t)>0$ for $t\in T_{m-1,k}$. Note that $c_{m,k}$ only depends on $m$ and $k$ (but not on $n$).

Our goal is to find a rounded version $\nu$ of $\nu_{m,k}$ (and an appropriate $\tau$) with the properties in Proposition \ref{proprounding}. Recall that $n$ is sufficiently large and furthermore divisible by $k$.
First, let us form a rounded version $\tilde{\tau}$ of the probability distribution $\tau_{m,k}$ on $T_{m-1,k}$.

\begin{claim} There is an $S_k$-symmetric probability distribution $\tilde{\tau}$ on $T_{m-1,k}$ such that for every $t\in T_{m-1,k}$ the probability $\tilde{\tau}(t)$ is an integer multiple of $1/n$ and furthermore $\vert \tilde{\tau}(t)-\tau_{m,k}(t)\vert \leq m^k/n$ for every $t\in T_{m-1,k}$.
\end{claim}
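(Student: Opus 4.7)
My plan is to construct $\tilde{\tau}$ by rounding $\tau_{m,k}$ one $S_k$-orbit at a time. Enumerate the $S_k$-orbits of $T_{m-1,k}$ as $O_1,\dots,O_r$, and for each $i$ write $s_i=\vert O_i\vert$ and $p_i=\tau_{m,k}(t)$ for any $t\in O_i$ (well-defined by the $S_k$-symmetry of $\tau_{m,k}$). An $S_k$-symmetric function $\tilde{\tau}$ on $T_{m-1,k}$ whose values are integer multiples of $1/n$ is exactly the same data as a choice of non-negative integers $q_1,\dots,q_r$, via $\tilde{\tau}(t):=q_i/n$ for $t\in O_i$. The constraint that $\tilde{\tau}$ is a probability distribution then becomes $\sum_i s_i q_i=n$, and the closeness bound $\vert\tilde{\tau}(t)-\tau_{m,k}(t)\vert\le m^k/n$ becomes $\vert q_i-n p_i\vert\le m^k$.

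First I would take the initial guess $q_i^{(0)}:=\lfloor n p_i\rfloor$, so that $\vert q_i^{(0)}-n p_i\vert<1$ and the total deficit $R:=n-\sum_i s_i q_i^{(0)}$ is a non-negative integer with $R<\sum_i s_i=\vert T_{m-1,k}\vert\le m^k$. I would then look for integer corrections $\beta_i$ with $\sum_i s_i\beta_i=R$ and $\vert\beta_i\vert\le m^k-1$, and finish by setting $q_i:=q_i^{(0)}+\beta_i$; then $\sum_i s_i q_i=n$ holds automatically, $\vert q_i-n p_i\vert\le m^k$, and non-negativity of $q_i$ follows for $n$ large because $p_i\ge c_{m,k}>0$.

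The key point making this possible is a divisibility observation. The $S_k$-orbit of $(m-1,0,\dots,0)\in T_{m-1,k}$ has exactly $k$ elements, so $k$ itself appears among the $s_i$; in particular $g:=\gcd(s_1,\dots,s_r)$ divides $k$. Because $n$ is divisible by $k$ by hypothesis, $g\mid n$, and since $g\mid s_i$ for all $i$ gives $g\mid\sum_i s_i q_i^{(0)}$, we get $g\mid R$. The set of integer solutions to $\sum_i s_i\beta_i=R$ is therefore non-empty; it is an affine translate of the rank-$(r-1)$ integer lattice $L:=\{\beta\in\ZZ^r:\sum_i s_i\beta_i=0\}$, whose $\ell^\infty$-covering radius is a constant depending only on $s_1,\dots,s_r$, hence only on $m$ and $k$. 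Concretely, a choice that first adjusts $\beta_i$ in the size-$k$ orbit (to absorb the largest multiple of $k$ out of $R$) and then applies Bezout among the remaining orbits to handle the residue modulo $k$ yields $\vert\beta_i\vert\le m^k-1$, as needed.

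The main obstacle is precisely this divisibility step. Without the hypothesis $k\mid n$ combined with the observation that the orbit of $(m-1,0,\dots,0)$ always has size exactly $k$, the deficit $R$ need not lie in the subgroup $g\ZZ$ generated by the $s_i$, and in that case no $S_k$-symmetric $\tilde{\tau}$ with values in $\tfrac{1}{n}\ZZ$ could have total mass exactly $1$. Once divisibility is in hand, the rest of the construction is routine rounding together with a Bezout/lattice covering-radius bookkeeping, and the generous error bound $m^k/n$ leaves plenty of slack.
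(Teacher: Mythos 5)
Your approach shares the paper's key observation---the $S_k$-orbit of $(m-1,0,\dots,0)$ has size exactly $k$, and $k$ divides $n$---but the correction step has a gap. After setting $q_i^{(0)}=\lfloor np_i\rfloor$, you need integer corrections $\beta_i$ with $\sum_i s_i\beta_i=R$ and $\vert\beta_i\vert\le m^k-1$. You correctly note that $g:=\gcd(s_1,\dots,s_r)$ divides $R$, so integer solutions exist; but the concrete construction you sketch---set $\beta_{i_0}=\lfloor R/k\rfloor$ on the size-$k$ orbit, then ``apply Bezout among the remaining orbits'' to absorb the residue $b:=R\bmod k$---requires $\gcd(s_i : i\ne i_0)$ to divide $b$, whereas the divisibility you established is only $g\mid b$, and $g$ can be a proper divisor of $\gcd(s_i:i\ne i_0)$. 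These really can differ: for $m=3$, $k=4$ there are exactly two orbits, of sizes $4$ (from $(2,0,0,0)$) and $6$ (from $(1,1,0,0)$), so $g=2$ while the remaining gcd is $6$; since $4\mid n$ one has $R\equiv 2\lfloor np_2\rfloor\pmod 4$, so $b=2$ occurs whenever $\lfloor np_2\rfloor$ is odd, and then $6\nmid b$ and the Bezout step has no solution. The abstract covering-radius bound you invoke does give some constant depending only on $m,k$, but you have not shown it is at most $m^k-1$, and the claimed error bound $m^k/n$ requires exactly that.

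The paper avoids the issue by rounding the values on orbits other than $\{(m-1,0,\dots,0)^\sigma\}$ down to multiples of $k/n$ rather than $1/n$. Then the total mass outside the special orbit is a multiple of $k/n$, hence so is the leftover mass (since $k\mid n$ makes $1$ a multiple of $k/n$), and dividing it equally among the $k$ permutations of $(m-1,0,\dots,0)$ produces values that are multiples of $1/n$. The error is at most $k/n$ off the special orbit and at most $\vert T_{m-1,k}'\vert/n\le m^k/n$ on it, with no lattice argument at all. In your formulation, the fix is to round $q_i^{(0)}$ for $i\ne i_0$ down to a multiple of $k$ rather than to an arbitrary integer; the deficit is then automatically a multiple of $k$ and is absorbed entirely by $\beta_{i_0}$, while the $\ell^\infty$ error of the initial rounding grows only from $1$ to $k$, which the slack in the bound $m^k/n$ easily tolerates.
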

\begin{proof} For this proof only, let $T_{m-1,k}'$ denote the set of those $t\in T_{m-1,k}$ that are not permutations of $(m-1,0,\dots,0)$. Then the set $T_{m-1,k}\setminus T_{m-1,k}'$ consists precisely of the $k$ permutations of $(m-1,0,\dots,0)$.

We can now define $\tilde{\tau}$ as follows: For every $t\in T_{m-1,k}'$  let us round the value $\tau_{m,k}(t)$ down to the next integer multiple of $k/n$ to obtain $\tilde{\tau}(t)$. It remains to define $\tilde{\tau}(t)$ for $t\in T_{m-1,k}\setminus T_{m-1,k}'$ (that means $t$ is one of the $k$ permutations of $(m-1,0,\dots,0)$). For those $t$, set
\[\tilde{\tau}(t)=\frac{1}{k}\left(1-\sum_{t'\in T_{m-1,k}'}\tilde{\tau}(t')\right)\geq \frac{1}{k}\left(1-\sum_{t'\in T_{m-1,k}'}\tau_{m,k}(t')\right)\geq 0,\]
Then we have
\[\sum_{t\in T_{m-1,k}}\tilde{\tau}(t)=\sum_{t'\in T_{m-1,k}'}\tilde{\tau}(t')+k\cdot \frac{1}{k}\left(1-\sum_{t'\in T_{m-1,k}'}\tilde{\tau}(t')\right)=1\]
and furthermore $\tilde{\tau}(t)\geq 0$ for every $t\in T_{m-1,k}$. Thus, $\tilde{\tau}$ is indeed a probability distribution on $T_{m-1,k}$. It is easy to see that $\tilde{\tau}$ is $S_k$-symmetric. Furthermore,
for every $t\in T_{m-1,k}'$, the probability $\tilde{\tau}(t)$ is an integer multiple of $k/n$, and so in particular of $1/n$. Since $n$ is divisible by $k$, we obtain that $1-\sum_{t'\in T_{m-1,k}'}\tilde{\tau}(t')$ is an integer multiple of $k/n$. Hence $\tilde{\tau}(t)$ is also an integer multiple of $1/n$ if $t\in T_{m-1,k}\setminus T_{m-1,k}'$.

Finally, for every $t\in T_{m-1,k}'$ we have $\vert \tilde{\tau}(t)-\tau_{m,k}(t)\vert \leq k/n\leq m^k/n$. Hence for $t\in T_{m-1,k}\setminus T_{m-1,k}'$ we have
\begin{multline*}
\vert \tilde{\tau}(t)-\tau_{m,k}(t)\vert=\left\vert\frac{1}{k}\left(1-\sum_{t'\in T_{m-1,k}'}\tilde{\tau}(t')\right)-\frac{1}{k}\left(1-\sum_{t'\in T_{m-1,k}'}\tau_{m,k}(t')\right)\right\vert\leq \sum_{t'\in T_{m-1,k}'}\frac{1}{k} \vert \tilde{\tau}(t')-\tau_{m,k}(t')\vert\\
\leq \sum_{t' \in T'_{m-1,k}} \frac{1}{k}\cdot \frac{k}{n}
=\frac{\vert T_{m-1,k}'\vert}{k}\cdot \frac{k}{n}\leq \frac{m^k}{n}.
\end{multline*}
All in all we obtain  $\vert \tilde{\tau}(t)-\tau_{m,k}(t)\vert \leq m^k/n$ for every $t\in T_{m-1,k}$.
\end{proof}

Now, let $\nu$ be the marginal of $\tilde{\tau}$. Then each probability $\nu(a)$ for $a\in \lbrace 0,\dots, m-1\rbrace$ satisfies
\[\nu(a)=\sum_{\substack{a_2,\dots,a_k\in \lbrace 0,\dots,m-1\rbrace\\a+a_2+\dots+a_k=m-1}}\tilde{\tau}(a,a_2,\dots,a_k).\]
So $\nu(a)$ is the sum of several probabilities $\tilde{\tau}(t)$ for certain $t\in T_{m-1,k}$ and is therefore an integer multiple of $1/n$. Furthermore, by Lemma \ref{lemma-expectation-marginal} we have $\E(\nu)=\E(\mu(\tilde{\tau}))=(m-1)/k$. Thus, $\nu$ satisfies the first two properties listed in Proposition \ref{proprounding}.

Since $\nu_{m,k}$ is the marginal of $\tau_{m,k}$, we also have 
\[\nu_{m,k}(a)=\sum_{\substack{a_2,\dots,a_k\in \lbrace 0,\dots,m-1\rbrace\\a+a_2+\dots+a_k=m-1}}\tau_{m,k}(a,a_2,\dots,a_k).\]
Hence for every $a\in \lbrace 0,\dots, m-1\rbrace$,
\[\vert \nu(a)-\nu_{m,k}(a)\vert\leq \sum_{\substack{a_2,\dots,a_k\in \lbrace 0,\dots,m-1\rbrace\\a+a_2+\dots+a_k=m-1}}\vert \tilde{\tau}(a,a_2,\dots,a_k)-\tau_{m,k}(a,a_2,\dots,a_k)\vert\leq \sum_{\substack{a_2,\dots,a_k\in \lbrace 0,\dots,m-1\rbrace\\a+a_2+\dots+a_k=m-1}}\frac{m^k}{n}\leq \frac{m^{2k-1}}{n}.\]
Thus, we obtain
\[\Vert \nu-\nu_{m,k}\Vert_1=\sum_{a=0}^{m-1}\vert \nu(a)-\nu_{m,k}(a)\vert\leq m\cdot \frac{m^{2k-1}}{n}=\frac{m^{2k}}{n}.\]
Note that for every $a\in \lbrace 0,\dots, m-1\rbrace$ we have (recall $0<\gamma_{m,k}<1$)
\[\nu_{m,k}(a)=\frac{\gamma_{m,k}^a}{1+\gamma_{m,k}+\dots+\gamma_{m,k}^{m-1}}\geq \frac{\gamma_{m,k}^{m-1}}{m}.\]
Thus, as long as $n$ is sufficiently large, we obtain for every $a\in \lbrace 0,\dots, m-1\rbrace$
\[\nu(a)\geq \nu_{m,k}(a)-\vert\nu(a)-\nu_{m,k}(a)\vert\geq \frac{\gamma_{m,k}^{m-1}}{m}-\frac{m^{2k}}{n}> \frac{\gamma_{m,k}^{m-1}}{2m}.\]
Now Lemma \ref{lemmaentropiesclose} yields
\[\vert \HH(\nu)-\HH(\nu_{m,k})\vert\leq \frac{m^{2k}}{n}\log\left(\frac{2m}{\gamma_{m,k}^{m-1}}\right).\]
So if we set $C_{m,k}=m^{2k}\log(2m/\gamma_{m,k}^{m-1})$, we obtain using Lemma \ref{lemmanupk}
\[\HH(\nu)\geq \HH(\nu_{m,k})-\vert \HH(\nu)-\HH(\nu_{m,k})\vert\geq \log \Gamma_{m,k}-C_{m,k}/n.\]
Thus, $\nu$ satisfies the third property in Proposition \ref{proprounding}.

We now need to find a probability distribution $\tau$ on $T_{m-1,k}$ that satisfies the last three properties listed in Proposition \ref{proprounding}. We will define $\tau$ in several steps.
Note that as long as $n$ is large enough, we have $c_{m,k}-m^k/n>c_{m,k}/2$ and therefore
\[\tilde{\tau}(t)\geq \tau_{m,k}(t)-\vert \tilde{\tau}(t)-\tau_{m,k}(t)\vert\geq c_{m,k}-\frac{m^k}{n}>\frac{c_{m,k}}{2}\]
for every $t\in T_{m-1,k}$.

Recall that $\nu$ is the marginal of $\tilde{\tau}$, so the $k$ coordinate projections of $\tilde{\tau}$ are all equal to $\nu$. Now, let $\tau_0$ be a probability distribution on $T_{m-1,k}$ with maximal entropy under the condition that all of the $k$ coordinate projections of $\tau_0$ are equal to $\nu$. For every $\sigma\in S_k$, let $\tau_0^\sigma$ be the probability distribution on $T_{m-1,k}$ given by permuting $\tau_0$ according to $\sigma$, that is $\tau_0^\sigma(t)=\tau_0(t^\sigma)$ for every $t\in T_{m-1,k}$. Then the $k$ coordinate projections of $\tau_0^\sigma$ are permutations of the $k$ coordinate projections of $\tau$ and therefore also all equal to $\nu$. Furthermore, by symmetry, we have $\HH(\tau_0^\sigma)=\HH(\tau_0)$ for each $\sigma\in S_k$. Now set
\[\tau_1=\frac{1}{k!}\sum_{\sigma\in S_k}\tau_0^\sigma.\]
Then $\tau_1$ is an $S_k$-symmetric probability distribution on $T_{m-1,k}$ and each of its $k$ coordinate projections equals $\nu$, so $\tau_1$ has marginal $\nu$. Furthermore, by concavity of the entropy function, we have
\begin{equation}\label{eq-tau0-tau1}
\HH(\tau_1)=\HH\left(\frac{1}{k!}\sum_{\sigma\in S_k}\tau_0^\sigma\right)
\geq \frac{1}{k!}\sum_{\sigma\in S_k}\HH(\tau_0^\sigma)=\frac{1}{k!}\sum_{\sigma\in S_k}\HH(\tau_0)=\HH(\tau_0).
\end{equation}
By the choice of $\tau_0$ this actually implies $\HH(\tau_1)=\HH(\tau_0)$, although this is not relevant for our argument.

Since we assumed that $n$ is sufficiently large, we may assume $n>2(d_{m,k}+1)/c_{m,k}$ and set
\[\tau_2=\frac{2(d_{m,k}+1)}{c_{m,k}n} \tilde{\tau}+\left(1- \frac{2(d_{m,k}+1)}{c_{m,k}n}\right)\tau_1.\]
Then $\tau_2$ is also an $S_k$-symmetric probability distribution on $T_{m-1,k}$ with marginal $\nu$ (since both $\tilde{\tau}$ and $\tau_1$ have these properties). Furthermore, by concavity of the entropy function, we have
\begin{multline}\label{eq-tau1-tau2}
\HH(\tau_2)\geq \frac{2(d_{m,k}+1)}{c_{m,k}n} \HH(\tilde{\tau})+\left(1- \frac{2(d_{m,k}+1)}{c_{m,k}n}\right)\HH(\tau_1)\geq \HH(\tau_1)-\frac{2(d_{m,k}+1)}{c_{m,k}n}\HH(\tau_1)\\
\geq \HH(\tau_1)-\frac{2k(d_{m,k}+1)\log m}{c_{m,k}n},
\end{multline}
where we used that $\HH(\tau_1)\leq \log(\vert T_{m-1,k}\vert)\leq \log(m^k)=k\log m$.

Recall that for sufficiently large $n$ we have $\tilde{\tau}(t)\geq c_{m,k}/2$ for all $t\in T_{m-1,k}$ and hence
\[\tau_2(t)=\frac{2(d_{m,k}+1)}{c_{m,k}n} \tilde{\tau}(t)+\left(1- \frac{2(d_{m,k}+1)}{c_{m,k}n}\right)\tau_1(t)\geq \frac{2(d_{m,k}+1)}{c_{m,k}n}\cdot \frac{c_{m,k}}{2}=\frac{d_{m,k}+1}{n}\]
for each $t\in T_{m-1,k}$.
Since both $\tau_2$ and $\tilde{\tau}$ are $S_k$-symmetric probability distributions on $T_{m-1,k}$ with marginal $\nu$, their difference $\tau_2-\tilde{\tau}: T_{m-1,k}\to \mathbb{R}$ lies in the space $U$ defined above. So by Lemma \ref{lemma-lattice-U} we can find some $u\in U$ with $\Vert (\tau_2-\tilde{\tau})-u\Vert_1\leq d_{m,k}/n$ such that $u(t)$ is an integer multiple of $1/n$ for each $t\in T_{m-1,k}$. Now define $\tau: T_{m-1,k}\to \mathbb{R}$ by
\[\tau=\tilde{\tau}+u.\]
For each $t\in T_{m-1,k}$, both $\tilde{\tau}(t)$ and $u(t)$ are integer multiples of $1/n$, and hence so is $\tau(t)$.
We also have that
\[\Vert \tau_2-\tau\Vert_1=\Vert \tau_2-\tilde{\tau}-u\Vert_1\leq d_{m,k}/n.\]
In particular, for every $t\in T_{m-1,k}$ we have $\vert \tau(t)-\tau_2(t)\vert \leq d_{m,k}/n$ and therefore
\[\tau(t)\geq \tau_2(t)-\frac{d_{m,k}}{n}\geq \frac{d_{m,k}+1}{n}-\frac{d_{m,k}}{n}=\frac{1}{n}.\]
In particular, all values of $\tau$ are positive. Furthermore $\sum_{t\in T_{m-1,k}}\tilde{\tau}(t)=1$ and $\sum_{t\in T_{m-1,k}}u(t)=0$, hence $\sum_{t\in T_{m-1,k}}\tau(t)=1$, so $\tau$ is a probability distribution on $T_{m-1,k}$. Since both $\tilde{\tau}$ and $u$ are $S_k$-symmetric, the probability distribution $\tau$ is also $S_k$-symmetric. Finally, the marginal of $\tau$ is $\nu$, because for every $a\in \lbrace 0,\dots,m-1\rbrace$ we have, using (\ref{eq-property-u}),
\[\nu(a)=\sum_{\substack{a_2,\dots,a_\l\in \lbrace 0,\dots,r\rbrace\\a+a_2+\dots+a_\l=r}}\tilde{\tau}(a,a_2,\dots,a_\l)=\sum_{\substack{a_2,\dots,a_\l\in \lbrace 0,\dots,r\rbrace\\a+a_2+\dots+a_\l=r}}\tau(a,a_2,\dots,a_\l).\]

Note that Lemma \ref{lemmaentropiesclose} yields
\begin{equation}\label{eq-tau2-tau}
\vert \HH(\tau_2)-\HH(\tau)\vert\leq \Vert \tau_2-\tau\Vert_1\log n\leq (d_{m,k}\log n)/n
\end{equation}

It remains to check the last condition in Proposition \ref{proprounding}. Let $\tau'$ be a probability distribution on $T_{m-1,k}$ such that each of the $k$ coordinate projections on $\tau'$ equals $\nu$. By the choice of $\tau_0$, we have $\HH(\tau')\leq \HH(\tau_0)$ . Hence from (\ref{eq-tau0-tau1}), (\ref{eq-tau1-tau2}) and (\ref{eq-tau2-tau}) we obtain
\[\HH(\tau')\leq \HH(\tau_0)\leq \HH(\tau_1)\leq \HH(\tau_2)+\frac{2k(d_{m,k}+1)\log m}{c_{m,k}n}\leq \HH(\tau)+\frac{d_{m,k}\log n}{n}+\frac{2k(d_{m,k}+1)\log m}{c_{m,k}n}.\]
If $n$ is sufficiently large, this yields $\HH(\tau')\leq \HH(\tau) + (D_{m,k}\log n)/n$ with $D_{m,k}=2d_{m,k}$.
\end{proof}

\subsection{Proof of Lemma \ref{lemma-linearalgebra}}

\begin{proof}[Proof of Lemma \ref{lemma-linearalgebra}]
Let us examine the span of $\left(\tilde{g}_1(x_1),\dots,\tilde{g}_k(x_k),\tilde{g}_1(x_1'),\dots,\tilde{g}_k(x_k') \right)$ in $\mathbb{Q}^{n+k-1}$. To simplify notation, let $u_i=\tilde g_i(x_i)$ and $u_i'=\tilde g_i(x_i')$. First, recall that
$x_1+\dots+x_k=x_1'+\dots+x_k'=(m-1)\cdot \ones$ implies $u_1+\dots +u_k=0$ and $u_1'+\dots +u_k'=0$. 
Therefore, we have
\begin{multline*}
\spn_\QQ \left(u_1,\dots,u_k,u_1',\dots,u_k'\right)=\spn_\QQ \left(u_1,\dots,u_{k-1},u_1',\dots,u_{k-1}'\right)\\
=\spn_\QQ \left(u_1,\dots,u_{k-1},u_1-u_1',\dots,u_{k-1}-u_{k-1}'\right).
\end{multline*}
Let us examine the last $k-1$ coordinates of the vectors on the right-hand side. Each $u_i=\tilde g_i(x_i)$ for $1\leq i\leq k-1$ is the $i$-th standard basis vectors when restricted to the last $k-1$ coordinates. On the other hand, each $u_i-u_i'=\tilde{g}_{i}(x_{i})-\tilde{g}_{i}(x_{i}')$ has only zeros in the last $k-1$ coordinates. Hence all linear relations between the vectors on the right-hand side above are between $u_1-u_1',\dots, u_{k-1}-u_{k-1}'$. This implies that
\begin{multline*}
\dim\spn_\QQ \left(u_1,\dots,u_k,u_1',\dots,u_k'\right)
=\dim \spn_\QQ \left(u_1,\dots,u_{k-1},u_1-u_1',\dots,u_{k-1}-u_{k-1}'\right)\\
=k-1+\dim\spn_\QQ \left(u_1-u_1',\dots,u_{k-1}-u_{k-1}'\right).
\end{multline*}
By the definition of $\tilde{g}_i$ we actually know that for $1\leq i\leq k-1$, the vector $u_i-u_i'=\tilde{g}_i(x_i)-\tilde{g}_i(x_i')\in \mathbb{Z}^{n+k-1}$ is the same as $x_i-x_i'\in \mathbb{Z}^n$ with $k-1$ zeros attached at the end. So
\[\dim\spn_\QQ \left(u_1-u_1',\dots,u_{k-1}-u_{k-1}'\right)=\dim\spn_\QQ \left(x_1-x_1',\dots,x_{k-1}-x_{k-1}'\right)\]
and therefore
\begin{multline*}
\dim \spn_\QQ \left(\tilde{g}_1(x_1),\dots,\tilde{g}_k(x_k),\tilde{g}_1(x_1'),\dots,\tilde{g}_k(x_k')\right)=\dim\spn_\QQ \left(u_1,\dots,u_k,u_1',\dots,u_k'\right)\\
=k-1+\dim\spn_\QQ \left(x_1-x_1',\dots,x_{k-1}-x_{k-1}'\right)
=k-1+\dim\spn_\QQ \left(x_1-x_1',\dots ,x_k-x_k'\right),
\end{multline*}
where in the last step we used that $(x_1-x_1')+\dots +(x_k-x_k')=0$.

It remains to show that  
\[\dim\spn_{\FP}\left(g_1(x_1),\dots ,g_k(x_k),g_1(x_1'),\dots,g_k(x_k')\right)=\dim \spn_\QQ \left(\tilde{g}_1(x_1),\dots,\tilde{g}_k(x_k),\tilde{g}_1(x_1'),\dots,\tilde{g}_k(x_k')\right).\]
Recall that by definition the vectors on the left-hand side are just the projections of the vectors on the right-hand side from $\mathbb{Z}^{n+k-1}$ to $\FPnk$. Hence the dimension on the left-hand side is at most as large as the dimension on the right-hand side. However, if the dimension on the right-hand side is $\l$, we can take $\l$ independent vectors from the set $\tilde{g}_1(x_1),\dots,\tilde{g}_k(x_k),\tilde{g}_1(x_1'),\dots,\tilde{g}_k(x_k')$. Consider the $(\l \times (n+k-1))$-matrix (with entries in $\mathbb{Z}$) whose rows are the chosen $\l$ vectors. As the rank of that matrix over $\mathbb{Q}$ is $\l$, there exists an $(\l \times \l)$-submatrix whose determinant is a nonzero integer. Since this determinant has absolute value at most $\l!(m-1)^\l \le (2k)!(m-1)^{2k} < P$, this implies that the determinant must be nonzero over $\FP$. So the chosen $\l$ vectors are also independent over $\FP$ and the dimension on the right-hand side is at least $\l$. This proves the desired equality of the two dimensions.
\end{proof}

\section{Proof of Theorem \ref{theo-marginal}}
\label{sect-marginal}

The next three sections are devoted to proving Theorem \ref{theo-marginal}. From now on, we consider $k\geq 3$ to be fixed.

\subsection{A generalization of Theorem \ref{theo-marginal}}

Instead of proving Theorem \ref{theo-marginal} directly, we will prove the following more general statement that applies to a certain class of probability distributions on $\lbrace 0,\dots,n\rbrace$ with expectation $n/k$. Here $n\geq 0$ is any non-negative integer. We will take $n=m-1$ and use the fact that $\nu_{m,k}$ has expectation $(m-1)/k$ to obtain Theorem \ref{theo-marginal}.

\begin{theorem}\label{theo-strict}
Let $n\geq 0$ be an integer and let $\psi$ be a probability distribution on $\lbrace 0,\dots,n\rbrace$ with expectation $n/k$ that satisfies $\psi(0)> \psi(1)> \dots > \psi(n)>0$. If $n\geq k$, let us also assume that $2\psi(\lfloor n/k\rfloor)<\psi(\lfloor n/k\rfloor-1)+\psi(\lceil n/k\rceil)$. 
Then $\psi$ occurs as the marginal of an $S_k$-symmetric probability distribution $\tau$ on $T_{n,k}$ with $\tau(t)>0$ for every $t\in T_{n,k}$.
\end{theorem}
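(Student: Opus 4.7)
I would prove Theorem \ref{theo-strict} by strong induction on $n$. The key reformulation is that the conclusion is equivalent to $\psi$ lying in the relative interior of the polytope $P_{n,k}=\operatorname{conv}\{p_O:O\text{ an }S_k\text{-orbit of }T_{n,k}\}$, where $p_O$ denotes the marginal of the uniform distribution on the orbit $O$; equivalently, we need to write $\psi=\sum_O w_O p_O$ with $w_O>0$ for every orbit $O$ and $\sum_O w_O=1$.

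\textbf{Base cases.} For $n=0$ and $n=1$ there is essentially nothing to check, since the space of $S_k$-symmetric distributions on $T_{n,k}$ is one-dimensional and matches the unique admissible $\psi$ of expectation $n/k$. For $2\le n\le k-1$, the orbits of $T_{n,k}$ correspond to partitions of $n$, and I would construct positive weights $w_O$ explicitly; the strict monotonicity of $\psi$ together with the mean constraint suffices to solve the relevant linear system with positive entries.

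\textbf{Inductive step.} For $n\ge k$, I would decompose $\tau=\tau^{+}+\tau^{0}$, where $\tau^{+}$ is supported on tuples with all coordinates $\ge 1$ and $\tau^{0}$ is supported on tuples with at least one zero coordinate. The shift $(a_1,\ldots,a_k)\mapsto(a_1-1,\ldots,a_k-1)$ identifies $S_k$-symmetric distributions on $\{t\in T_{n,k}:\min_i a_i\ge 1\}$ with those on $T_{n-k,k}$, so $\tau^{+}$ corresponds to an $S_k$-symmetric distribution on $T_{n-k,k}$ with some marginal $\psi^{-}$ on $\{0,\ldots,n-k\}$ of expectation $(n-k)/k$. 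I would then seek a decomposition
\[\psi=\alpha\cdot(\text{shift of }\psi^{-})+(1-\alpha)\cdot\psi_{\mathrm{bdy}},\]
with $\alpha\in(0,1)$, $\psi^{-}$ satisfying the hypotheses of Theorem \ref{theo-strict} at level $n-k$, and $\psi_{\mathrm{bdy}}$ a strictly positive combination of marginals $p_O$ over orbits $O$ containing at least one zero. The inductive hypothesis then produces a strictly positive $\tau^{+}$ from $\psi^{-}$, and $\tau^{0}$ is built directly from $\psi_{\mathrm{bdy}}$.

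\textbf{Main obstacle.} The hard part is orchestrating the decomposition so that $\psi^{-}$ inherits both strict monotonicity and (when $n-k\ge k$) the convexity condition. Strict monotonicity transfers relatively easily by a careful allocation of the boundary weights across the orbits supplying $\psi(0)$ and $\psi(n)$. The convexity inequality is the delicate point: the "middle" index moves from $\lfloor n/k\rfloor$ for $\psi$ to $\lfloor n/k\rfloor-1$ for $\psi^{-}$, so one must leverage the full hypothesis $2\psi(\lfloor n/k\rfloor)<\psi(\lfloor n/k\rfloor-1)+\psi(\lceil n/k\rceil)$ when selecting $\psi_{\mathrm{bdy}}$. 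One also needs to verify that the prescribed $\psi_{\mathrm{bdy}}$ lies in the relative interior of the polytope spanned by the boundary-orbit marginals, which is most intricate for values of $n$ just slightly larger than $k$, where the boundary orbits dominate and the room for maneuver is smallest.
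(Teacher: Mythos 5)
Your high-level reduction strategy — peel off a boundary contribution from orbits with a zero coordinate, shift the remainder down to $T_{n-k,k}$, and induct on $n$ in steps of $k$ — is exactly the strategy the paper uses, so the skeleton of the argument is right. But two of the pieces you flag as "the hard part" are genuinely the bulk of the proof, and the proposal does not contain a route through them.

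First, the positivity threading. You propose to maintain $w_O>0$ for every orbit at every stage of the induction. The paper avoids this entirely: it works with \emph{non-negative} combinations of $n$-simple scaled distributions throughout the induction (this is Proposition \ref{propo-tame-main}), and recovers strict positivity only at the very end by the simple device of first subtracting $x\cdot\sum_{(a_1,\dots,a_k)\in T_{n,k}}(\indic_{a_1}+\dots+\indic_{a_k})$ for a tiny $x>0$, expressing the remainder as a non-negative combination, and then adding the uniform term back. If you try to carry strict positivity through the inductive step, you lose the freedom to hit the equality case of the boundary inequality (see below), which is precisely what makes the reduction clean. So you should separate the positivity claim from the decomposition claim before starting the induction.

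Second, and more seriously, the inductive step as stated does not explain why the shifted residual $\psi^{-}$ has a non-negative value at $0$ and still satisfies the monotonicity and convexity conditions. The paper does this via a specific choice of boundary orbits — only the $k-1$ orbits of the form $(n-i,\underbrace{1,\dots,1}_i,\underbrace{0,\dots,0}_{k-1-i})$ for $i=0,\dots,k-2$ — and via two lemmas that your proposal has no analogue of. One is a preprocessing step (Lemma \ref{lemma-slack}): one first subtracts a non-negative combination of $n$-simple distributions so as to reach \emph{equality} in the inequality $\phi(0)\geq(k-1)\phi(n)+(k-2)\phi(n-1)+\dots+\phi(n-k+2)$; this equality is exactly what makes the residual vanish at $0$ after the boundary orbits are removed. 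The other is the substantive inequality (Lemma \ref{lemma-inequality}): under that equality and the tameness conditions, one has $\phi(1)\geq\phi(n-1)+2\phi(n-2)+\dots+(k-2)\phi(n-k+2)+(k-1)\phi(n-k+1)+\dots+\phi(n-2k+3)$, which is what guarantees the shifted residual satisfies the boundary inequality at level $n-k$. This lemma is where the "convexity" hypothesis $2\psi(\lfloor n/k\rfloor)\leq\psi(\lfloor n/k\rfloor-1)+\psi(\lceil n/k\rceil)$ is used, and the proof (Section \ref{sect-inequality}) is a delicate, several-page computation; there is no soft reason for it. Your proposal recognizes that the convexity hypothesis must be "leveraged" when selecting $\psi_{\mathrm{bdy}}$ and that the middle index shifts, but does not say how — and without a concrete claim like Lemma \ref{lemma-inequality} the induction does not close. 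Finally, a remark on economy: you propose to let $\psi_{\mathrm{bdy}}$ range over \emph{all} orbits with a zero, and then to verify $\psi_{\mathrm{bdy}}$ lies in the relative interior of the corresponding sub-polytope. Using all such orbits introduces many free parameters and makes the verification harder, not easier; the paper's restriction to the $(n-i,1^i,0^{k-1-i})$ family gives a unique choice of $\psi_{\mathrm{bdy}}$ (forced by matching $\psi$ at $n,n-1,\dots,n-k+2$), and the work then goes entirely into the inequalities above rather than into a polytope membership check.
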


\begin{proof}[Proof of Theorem \ref{theo-marginal} assuming Theorem \ref{theo-strict}] Let $n=m-1$ and recall that $\nu_{m,k}$ is a probability distribution on $\lbrace 0,\dots,m-1\rbrace$ with expectation $(m-1)/k$ (see Lemma \ref{lemmanupk}). Since
\[\nu_{m,k}(i)=\frac{\gamma_{m,k}^i}{1+\gamma_{m,k}+\dots+\gamma_{m,k}^{m-1}}.\]
for $i=0,\dots,m-1$ and $0<\gamma_{m,k}<1$, we clearly have $\nu_{m,k}(0)> \nu_{m,k}(1)> \dots > \nu_{m,k}(m-1)>0$. So in order to be able to apply Theorem \ref{theo-strict}, we just need to check that
\[2\nu_{m,k}(\lfloor (m-1)/k\rfloor)< \nu_{m,k}(\lfloor (m-1)/k\rfloor-1)+\nu_{m,k}(\lceil (m-1)/k\rceil)\]
if $m-1\geq k$. If $m-1$ is divisible by $k$, this is clearly true. So assume $m-1\geq k$ and that $m-1$ is not divisible by $k$. Setting $\l=\lfloor (m-1)/k\rfloor$ to simplify notation (note that $\l\geq 1$), we need to check that
\[2\frac{\gamma_{m,k}^\l}{\gamma_{m,k}^0+\dots+\gamma_{m,k}^{m-1}} < \frac{\gamma_{m,k}^{\l-1}}{\gamma_{m,k}^0+\dots+\gamma_{m,k}^{m-1}}+\frac{\gamma_{m,k}^{\l+1}}{\gamma_{m,k}^0+\dots+\gamma_{m,k}^{m-1}}.\]
This is indeed true, since $2\gamma_{m,k}^\l< \gamma_{m,k}^{\l-1}+\gamma_{m,k}^{\l+1}$.
Thus, $\nu_{m,k}$ satisfies the assumptions of Theorem \ref{theo-strict} with $n=m-1$ and therefore occurs as the marginal of an $S_k$-symmetric probability distribution $\tau_{m,k}$ on $T_{m-1,k}$ with $\tau_{m,k}(t)>0$ for every $t\in T_{m-1,k}$
\end{proof}

In the next subsection, we will introduce scaled distributions, following  Pebody \cite[Section 2]{PEBODY16}. They provide a useful framework for the proof of Theorem \ref{theo-strict}. In the third subsection we will state Proposition \ref{propo-tame-main}, the main proposition for the proof of Theorem \ref{theo-strict}, as well as the key lemmas for the proof of this proposition. All of this will be formulated in the framework of scaled distributions introduced in the next subsection. In the fourth subsection we will finally see how Theorem \ref{theo-strict} follows from Proposition \ref{propo-tame-main}.

Section \ref{sect-marginal-proof} will be devoted to proving Proposition \ref{propo-tame-main} and Section \ref{sect-lemmas2} to proving the key lemmas stated in  Subsection \ref{sect-overview}. This will then complete the proof of Theorem \ref{theo-strict} and thereby establish Theorem \ref{theo-marginal}.

\subsection{Scaled distributions}
\label{sect-scaled}

Here, we will introduce scaled distributions, the framework in which the proof of Theorem \ref{theo-strict} will operate. Everything in this subsection follows the first half of Section 2 of Pebody's paper \cite{PEBODY16}. Throughout this subsection, $n$ is an arbitrary non-negative integer.

\begin{definition}[\cite{PEBODY16}] A \emph{scaled distribution} $\psi$ on $\lbrace 0,\dots,n\rbrace$ is a map $\lbrace 0,\dots,n\rbrace\to \mathbb{R}_{\geq 0}$. A scaled distribution $\psi$ on $\lbrace 0,\dots,n\rbrace$ has \emph{mean} $n/k$ if $\sum_{i=0}^{n} i\psi(i)=\frac{n}{k} \sum_{i=0}^{n}\psi(i)$.
\end{definition}

If $\psi$ is a scaled distribution that can be written as a linear combination of scaled distributions $\psi_1$ and $\psi_2$, and if $\psi_1$ and $\psi_2$ both have mean $n/k$, then $\psi$ also has mean $n/k$.

Note that any probability distribution $\psi$ on $\lbrace 0,\dots,n\rbrace$ can be interpreted as a scaled distribution. Furthermore, to any non-zero scaled distribution $\psi$ on $\lbrace 0,\dots,n\rbrace$ we can associate an actual probability distribution $\overline{\psi}$ by setting
\[\overline{\psi}(i)=\frac{\psi(i)}{\psi(0)+\dots+\psi(n)}.\]
Note that $\psi$ has mean $n/k$ (according to the definition above) if and only if $\overline{\psi}$ has expectation $n/k$ (according to the usual definition in probability theory).

Following Pebody \cite{PEBODY16}, let a scaled distribution on $\lbrace 0,\dots,n\rbrace$ be called \emph{$n$-simple} if it is of the form $\indic_{a_1}+\indic_{a_2}+\dots+\indic_{a_k}$ for some $(a_1, a_2,\dots,a_k)\in T_{n,k}$ (that  means $a_1, a_2,\dots ,a_k\in \lbrace 0,\dots,n\rbrace$ with $a_1+a_2+\dots+a_k=n$). Note that any $n$-simple scaled distribution has mean $n/k$.

The following lemma is a variation of Lemma 4 in \cite{PEBODY16}. Although Lemma 4 in \cite{PEBODY16} is only for the case $k=3$ and has a slightly different statement, basically the same proof works here.

\begin{lemma}[see Lemma 4 in \cite{PEBODY16}]\label{lemma-pebody-simple}
Let $\psi$ be a non-zero scaled distribution on $\lbrace 0,\dots,n\rbrace$, and let $\overline{\psi}$ be the associated probability distribution as defined above. Then the following two statements are equivalent.
\begin{itemize}
\item[(1)] The scaled distribution $\psi$ can be written as a positive linear combination of all the $n$-simple scaled distributions $\indic_{a_1}+\indic_{a_2}+\dots+\indic_{a_k}$ for all $(a_1, a_2,\dots,a_k)\in T_{n,k}$.
\item[(2)] The probability distribution  $\overline{\psi}$ occurs as the marginal of an $S_k$-symmetric probability distribution $\tau$ on $T_{n,k}$ with $\tau(t)>0$ for every $t\in T_{n,k}$.
\end{itemize}
\end{lemma}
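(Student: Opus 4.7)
The lemma is essentially a dictionary between two ways of encoding the same combinatorial data, so the plan is just to make that dictionary explicit. The key bookkeeping identity is that for any $S_k$-symmetric distribution $\tau$ on $T_{n,k}$, symmetry gives
\[
\sum_{(a_1,\dots,a_k)\in T_{n,k}} |\{j: a_j = i\}|\,\tau(a_1,\dots,a_k) \;=\; \sum_{j=1}^k \Pr(a_j=i) \;=\; k\,\mu(\tau)(i),
\]
for every $i\in\{0,\dots,n\}$. This is the bridge connecting ``coefficient of $i$ in $\indic_{a_1}+\dots+\indic_{a_k}$'' to ``marginal at $i$'', and both directions of the equivalence follow from it by a short calculation.

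For (1) $\Rightarrow$ (2): Given coefficients $c_{(a_1,\dots,a_k)}>0$ with $\psi=\sum_{t\in T_{n,k}} c_t(\indic_{t_1}+\dots+\indic_{t_k})$, define
\[
\tau_0(a_1,\dots,a_k) = \frac{1}{k!}\sum_{\sigma\in S_k} c_{(a_{\sigma(1)},\dots,a_{\sigma(k)})}.
\]
I would check that $\tau_0$ is positive, $S_k$-symmetric (immediate by reindexing $\sigma\mapsto \pi\sigma$), and by the bridge identity combined with a substitution $(b_1,\dots,b_k)=(a_{\sigma(1)},\dots,a_{\sigma(k)})$ inside each term, that $k\mu(\tau_0)(i)=\psi(i)$. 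A short computation shows $\sum_t \tau_0(t)=\sum_t c_t = \tfrac{1}{k}\sum_i \psi(i)$, so normalizing $\tau=\tau_0/\sum_t\tau_0(t)$ yields an $S_k$-symmetric probability distribution on $T_{n,k}$ with $\tau(t)>0$ and $\mu(\tau)=\overline{\psi}$.

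For (2) $\Rightarrow$ (1): Given such a $\tau$, set $Z=\sum_{i=0}^n\psi(i)$ and $c_t=(Z/k)\tau(t)>0$ for each $t\in T_{n,k}$. Then, applying the bridge identity and using $\mu(\tau)=\overline{\psi}$,
\[
\sum_{t\in T_{n,k}} c_t(\indic_{t_1}+\dots+\indic_{t_k})(i)
= \frac{Z}{k}\sum_t \tau(t)|\{j: t_j = i\}|
= \frac{Z}{k}\cdot k\,\overline{\psi}(i) = \psi(i),
\]
exhibiting $\psi$ as a positive linear combination of all $n$-simple scaled distributions.

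There is no real obstacle; the only thing to be careful about is the factor of $k$ arising from the bridge identity (each symmetric distribution contributes $k$ copies of the marginal when summed against $|\{j:a_j=i\}|$) and correspondingly the overall normalization factor $Z/k$. Once the bridge identity is in hand, both implications are a couple of lines, and the proof is essentially an unpacking of definitions.
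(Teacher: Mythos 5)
Your proof is correct and takes essentially the same route as the paper: both directions hinge on the identity $k\,\mu(\tau)(i)=\sum_{t\in T_{n,k}}\tau(t)\bigl(\indic_{t_1}+\dots+\indic_{t_k}\bigr)(i)$ for $S_k$-symmetric $\tau$, and the $(1)\Rightarrow(2)$ direction symmetrizes the coefficients over $S_k$ and normalizes. The only cosmetic difference is that you normalize at the end rather than building the normalization directly into the definition of $\tau$ as the paper does.
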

\begin{proof} Note that for any $S_k$-symmetric symmetric probability distribution $\tau$ on $T_{n,k}$, its marginal $\mu(\tau)$ is given by
\[\mu(\tau)=\sum_{(a_1,\dots,a_k)\in T_{n,k}}\tau(a_1,\dots,a_k)\indic_{a_1}=\dots=\sum_{(a_1,\dots,a_k)\in T_{n,k}}\tau(a_1,\dots,a_k)\indic_{a_k}.\]

First let us assume (1), so 
\[\psi=\sum_{(a_1,\dots,a_k)\in T_{n,k}}\lambda(a_1,\dots,a_k)\cdot (\mathbf{1}_{a_1}+\dots+\mathbf{1}_{a_k})\]
with $\lambda(a_1,\dots,a_k)> 0$ for all $(a_1,\dots,a_k)\in T_{n,k}$. Note that
\[\psi(0)+\dots+\psi(n)=\sum_{(a_1,\dots,a_k)\in T_{n,k}}k\lambda(a_1,\dots,a_k)=k\sum_{t\in T_{n,k}}\lambda(t).\]
Now consider the probability distribution $\tau$  on $T_{n,k}$ given by
\[\tau(a_1,\dots,a_k)=\frac{\sum_{\sigma\in S_k}\lambda((a_1,\dots,a_k)^\sigma)}{k!\sum_{t\in T_{n,k}}\lambda (t)}.\]
Clearly, $\tau$ is $S_k$-symmetric and $\tau(t)>0$ for every $t\in T_{n,k}$. Furthermore its marginal $\mu(\tau)$ is given by
\begin{multline*}
\mu(\tau)=\frac{1}{k}\sum_{(a_1,\dots,a_k)\in T_{n,k}}\tau(a_1,\dots,a_k)\cdot(\indic_{a_1}+\dots+\indic_{a_k})\\
=\frac{\sum_{(a_1,\dots,a_k)\in T_{n,k}}\sum_{\sigma\in S_k}\lambda((a_1,\dots,a_k)^\sigma)\cdot(\indic_{a_1}+\dots+\indic_{a_k})}{k\cdot k!\sum_{t\in T_{n,k}}\lambda (t)}\\
=\frac{k!\sum_{(a_1,\dots,a_k)\in T_{n,k}}\lambda(a_1,\dots,a_k)\cdot(\indic_{a_1}+\dots+\indic_{a_k})}{k!(\psi(0)+\dots+\psi(n))}=\frac{\psi}{\psi(0)+\dots+\psi(n)}=\overline{\psi}.
\end{multline*}
Thus $\overline{\psi}$ is the marginal of the $S_k$-symmetric probability distribution $\tau$ on $T_{n,k}$.

For the converse, assume that $\overline{\psi}$  is the marginal of some $S_k$-symmetric probability distribution $\tau$ on $T_{n,k}$ with $\tau(t)>0$ for every $t\in T_{n,k}$. Then
\[\overline{\psi}=\sum_{(a_1,\dots,a_k)\in T_{n,k}}\tau(a_1,\dots,a_k)\indic_{a_1}=\dots=\sum_{(a_1,\dots,a_k)\in T_{n,k}}\tau(a_1,\dots,a_k)\indic_{a_k},\]
so
\[\overline{\psi}=\sum_{(a_1,\dots,a_k)\in T_{n,k}}\frac{\tau(a_1,\dots,a_k)}{k}(\indic_{a_1}+\dots+\indic_{a_k}).\]
Thus, $\overline{\psi}$ is a positive linear combination of all the $n$-simple scaled distributions. Now, by rescaling we can see that $\psi$ is also a positive linear combination of all the $n$-simple scaled distributions.
\end{proof}

\subsection{Proof Overview}
\label{sect-overview}

In this subsection, we will state the key ingredients for the proof of Theorem \ref{theo-strict}. First, we make the following definition that will be at the heart of the proof.

\begin{definition}\label{defi-tame}
Let $n\geq 0$ be an integer. A scaled distribution $\psi$ on $\lbrace 0,\dots,n\rbrace$ is called \emph{$n$-tame} if the following four statements hold.
\begin{itemize}
\item[(i)] $\psi$ has mean $n/k$, that is $\sum_{i=0}^{n} i\psi(i)=\frac{n}{k} \sum_{i=0}^{n}\psi(i)$.
\item[(ii)] If $n\geq 1$, then $\psi(1)\geq \psi(2)\geq \dots\geq \psi(n)$.
\item[(iii)] If $n\geq k$, then $\psi(0)\geq (k-1)\psi(n)+(k-2)\psi(n-1)+\dots+\psi(n-k+2)$.
\item[(iv)] If $n\geq 2k$, then $2\psi(\lfloor n/k\rfloor)\leq \psi(\lfloor n/k\rfloor-1)+\psi(\lceil n/k\rceil)$.
\end{itemize}
\end{definition}

In other words, if $n=0$, then $\psi$ only needs to satisfy condition (i). If $1\leq n<k$, the $\psi$ needs to satisfy (i) and the inequality in (ii). If $k\leq n<2k$, then in addition $\psi$ also needs to satisfy the inequality in (iii). And if $n\geq 2k$, then $\psi$ needs to satisfy (i) and all the three inequalities in (ii), (iii) and (iv).
Note that if $n$ is divisible by $k$, then condition (iv) is already implied by condition (ii), since $\lfloor n/k\rfloor=\lceil n/k\rceil$.

We are now ready to state the main proposition for the proof of Theorem \ref{theo-strict}:

\begin{proposition}\label{propo-tame-main}
Every $n$-tame scaled distribution is a non-negative linear combination of $n$-simple scaled distributions.
\end{proposition}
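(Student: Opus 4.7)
The plan is to induct on a combinatorial complexity measure of $\psi$, most naturally the number $N(\psi)$ of indices $i \in \{0,\dots,n\}$ where $\psi(i) > 0$, with ties broken (if needed) by lexicographic considerations on the support or by a continuous parameter. The base case $\psi \equiv 0$ is the empty combination. The next cases, where $\psi$ is supported on one or two points, are handled directly: the mean constraint (i) together with non-negativity pins $\psi$ down to a very low-dimensional cone, and one checks by hand that such a $\psi$ is a non-negative multiple of at most a few explicit $n$-simple distributions $\indic_{a_1} + \dots + \indic_{a_k}$.

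For the inductive step, given an $n$-tame $\psi \not\equiv 0$, I would select a specific $n$-simple distribution $\sigma = \indic_{a_1} + \dots + \indic_{a_k}$ and subtract the largest coefficient $\lambda \ge 0$ for which $\psi' := \psi - \lambda\sigma$ is still non-negative \emph{and} still satisfies (ii), (iii), (iv). At the maximal $\lambda$, either some entry $\psi'(i)$ drops to $0$ (so $N(\psi') < N(\psi)$ and we recurse) or one of the tameness inequalities becomes an equality, in which case $\psi'$ lies on a proper face of the tame cone and the problem can be split across that face. Since every $n$-simple has mean $n/k$, condition (i) is automatically preserved under subtraction, so the real work is the bookkeeping on (ii)--(iv).

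The choice of $\sigma$ must be adaptive: it should be tailored to whichever of the constraints (ii), (iii), (iv) is currently closest to being tight near $\psi$, and to the entry of $\psi$ that one is attempting to zero out. Roughly, if the monotone tail (ii) is slack, one can subtract simples of the form $\indic_{n-j}+\indic_{j'}+\cdots$ that reshape the tail; if (iii) is slack, one can afford to subtract multiples of the ``extremal'' simple $\indic_n + (k-1)\indic_0$ (and its analogues with $\indic_{n-1},\dots,\indic_{n-k+2}$); and if (iv) is slack at the middle index $\lfloor n/k\rfloor$, one can subtract simples concentrated near the mean such as $k\cdot\indic_{n/k}$ (when $k\mid n$) or a convex-combination analogue. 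The proof then presumably proceeds by a case analysis on which inequalities among (ii)--(iv) are active at $\psi$, and in each case one exhibits a workable $\sigma$.

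The main obstacle, and where I expect the technical heart of the argument to lie, is the interaction between conditions (ii), (iii), and (iv) during the subtraction: a simple chosen to create a zero in one location (or to respect one inequality) can violate another inequality elsewhere. Condition (iv) is particularly delicate because it constrains an \emph{interior} value $\psi(\lfloor n/k\rfloor)$ rather than an endpoint, and the tail-monotonicity (ii) must be simultaneously preserved. This is the step that, for $k=3$, Pebody \cite{PEBODY16} handled by a large case analysis, and the authors have warned in the introduction that extending it to $k>3$ requires genuinely new ideas beyond a direct generalization; I would expect the argument in Section \ref{sect-marginal-proof} to reorganize the case analysis around an invariant (perhaps a weighted count of tight inequalities plus $N(\psi)$) designed to decrease strictly at each subtraction step, thereby guaranteeing termination.
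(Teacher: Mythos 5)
Your proposal captures one genuine sub-component of the paper's argument — greedy subtraction of carefully chosen $n$-simples, choosing the largest coefficient until something becomes zero or tight — but it misses the organizing idea that makes the whole proof terminate, and as a result has a real gap.

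The paper's induction is on $n$ itself, not on the support size $N(\psi)$. Base cases are $n=0,\dots,k$, and these \emph{are} handled by exactly the kind of greedy support-reduction you describe, subtracting a tailored scaled distribution $\alpha_j$ (Lemma~\ref{lemma-alpha-j}) until either the top nonzero entry vanishes (reducing $j$) or $\psi(0)$ hits zero. For the inductive step $n\ge k+1$, the paper first uses a second greedy loop (Lemma~\ref{lemma-slack}) to subtract simples until inequality (iii) becomes an equality; at that point it invokes Lemma~\ref{lemma-inequality} — a nontrivial inequality that holds \emph{because} (iii) is tight — to justify subtracting the boundary simples $\indic_{n-i}+i\cdot\indic_1+(k-1-i)\cdot\indic_0$ for $i=0,\dots,k-2$. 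The residue $\thet$ then satisfies $\thet(0)=\thet(n-k+2)=\dots=\thet(n)=0$, and shifting everything down by one coordinate produces an $(n-k)$-tame scaled distribution $\eta$, to which induction on $n$ applies. This ``peel off the two boundary layers, then shift $n\to n-k$'' step is the engine of the proof, and it is not a support-reduction step at all.

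Here is where your plan has a concrete hole. You propose to decrease $N(\psi)$, and when that fails, to appeal to ``$\psi'$ lies on a proper face of the tame cone and the problem can be split across that face.'' But landing on a face does not decrease $N(\psi)$, nor any count of tight inequalities in a monotone way that you have exhibited — subtraction of a simple can slacken one inequality while tightening another, so ``number of tight inequalities plus $N(\psi)$'' is not a priori a decreasing potential, and you give no argument that it is. Moreover the faces of the tame cone (e.g.\ the face where (iii) is tight) are not instances of the same problem with smaller data; what lets the paper make progress on that face is the inequality of Lemma~\ref{lemma-inequality}, which is a genuinely new fact, not a bookkeeping consequence of tightness. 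Your writeup correctly flags condition (iv) as the delicate one, but the role of (iv) in the actual proof is very specific: it is exactly what makes Lemma~\ref{lemma-inequality} true, and Lemma~\ref{lemma-inequality} is exactly what makes the shift $n\to n-k$ legal. Without identifying that shift, or a substitute termination argument with a concretely decreasing potential, the plan does not yet constitute a proof.
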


The proof of Proposition \ref{propo-tame-main} will be by strong induction on $n$ with base cases $0\leq n\leq k$, and reducing from $n$ to $n-k$ in each step.
We will prove Proposition \ref{propo-tame-main} in Section \ref{sect-marginal-proof}. Here, we just state the key lemmas for the proof of Proposition \ref{propo-tame-main}. These lemmas will be proved in Section \ref{sect-lemmas2}.

\begin{lemma}\label{lemma-alpha-j} Let $n\geq 1$ and let $j\in \lbrace 1,\dots,n\rbrace$ with $j+1\geq 2n/k$. Then there exists a scaled distribution $\alpha_j$ on $\lbrace 0,\dots,n\rbrace$ satisfying the following six properties.
\begin{itemize}
\item[(a)] $\alpha_j$ is a non-negative linear combination of $n$-simple scaled distributions.
\item[(b)] $\alpha_j$ has mean $n/k$.
\item[(c)] $\alpha_j(1)\leq \alpha_j(2)\leq \dots\leq \alpha_j(j)$.
\item[(d)] $\alpha_j(j+1)=\alpha_j(j+2)= \dots= \alpha_j(n)=0$.
\item[(e)] $\alpha_j(j)\neq 0$.
\item[(f)] If $n\geq 2k$, then $2\alpha_j(\lfloor n/k\rfloor)\geq \alpha_j(\lfloor n/k\rfloor-1)+\alpha_j(\lceil n/k\rceil)$.
\end{itemize}
\end{lemma}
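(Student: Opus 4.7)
My plan is to construct $\alpha_j$ explicitly as a non-negative linear combination of $n$-simple scaled distributions, each supported in $\lbrace 0, 1, \ldots, j\rbrace$. Given any such construction, properties (a), (b), and (d) will be immediate: (a) by definition, (b) because every $n$-simple scaled distribution has mean $n/k$ and this is preserved under non-negative linear combinations, and (d) because each summand already vanishes outside $\lbrace 0, \ldots, j\rbrace$. The substance of the lemma is thus to arrange (c), (e), and (f).

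First I would unpack the hypothesis $j + 1 \geq 2n/k$. Combined with $k \geq 3$ and $n \geq 1$, this implies $j \geq \lceil n/k \rceil$, so that there exists an $n$-simple distribution supported in $\lbrace 0, \ldots, j\rbrace$ with the value $j$ appearing among its coordinates, for instance one of the form $(j, a, a, \ldots, a, b)$ with $a, b$ chosen near $(n-j)/(k-1)$. More strongly, the bound ensures that for every $i \in \lbrace 0, \ldots, j\rbrace$ one can build an $n$-simple distribution $\sigma_i$ supported in $\lbrace 0, \ldots, j\rbrace$ whose tuple contains both $\indic_j$ and $\indic_i$, by filling the remaining $k-2$ entries with values near $(n-i-j)/(k-2)$; the bound $j+1 \geq 2n/k$ is exactly what makes these remaining entries fit in $\lbrace 0, \ldots, j\rbrace$.

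Next, I would set $\alpha_j = \sum_{i} c_i \sigma_i$ over a suitable range of $i$, with carefully chosen non-negative coefficients $c_i$ designed so that the resulting profile is non-decreasing on $\lbrace 1, \ldots, j\rbrace$, nonzero at $j$, and satisfies the concavity bound at $\lfloor n/k \rfloor$. Property (e) is automatic once some $c_i > 0$, since each $\sigma_i$ contributes at least one $\indic_j$. A natural first attempt is to take $c_i$ proportional to $i$ or constant; from the explicit formula for how $\sigma_i$ contributes to each coordinate of $\alpha_j$, one can then check (c) and (f) by direct inequality, refining the coefficients if necessary.

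The main obstacle I foresee is simultaneously satisfying (c) and (f). Condition (c) forces $\alpha_j$ to be non-decreasing on $\lbrace 1, \ldots, j\rbrace$, while (f), needed only when $n \geq 2k$, asks for a concavity-like bound at $\lfloor n/k \rfloor$; since $\lfloor n/k \rfloor \leq j$ these pull in opposite directions. The cleanest reconciliation is to arrange for the increments $\alpha_j(i) - \alpha_j(i-1)$ to be non-increasing, at least near $\lfloor n/k \rfloor$. This reduces to a combinatorial convexity check on the counting function describing the contributions of the chosen $\sigma_i$'s at each coordinate, and once the $\sigma_i$'s are chosen symmetrically enough, this inequality should be directly verifiable, perhaps via a case split depending on whether $n$ is a multiple of $k$ (in which case (f) reduces to (c)) or not.
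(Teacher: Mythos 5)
Your high-level plan is the right one and matches the paper's: build $\alpha_j$ as an explicit non-negative combination of $n$-simple distributions supported in $\lbrace 0,\dots,j\rbrace$ and each containing an $\indic_j$, so that (a), (b), (d), (e) come for free and the real work is forcing (c) and (f). You also correctly identified that (c) and (f) are in tension. However, the proposal stops at a sketch and does not actually produce a construction, and the one concrete shape you do float — $\sigma_i$ of the form $(j,i,a,\dots,a)$ with constant fill $a\approx(n-i-j)/(k-2)$ — is not the same as the paper's and is not clearly workable. For instance, to keep the fill in range for $i=0$ you would need $j\geq n/(k-1)$, which is \emph{not} implied by the hypothesis $j+1\geq 2n/k$ for small $n$; and "$c_i$ proportional to $i$ or constant, refine if necessary" leaves exactly the heart of the lemma unproved.

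The paper instead writes $n=\ell j + r$ ($0\le r\le j-1$), and in the main case takes
\[
\alpha_j=\sum_{i=r}^{j}\bigl(\underbrace{\indic_j+\dots+\indic_j}_{\ell-1}+\indic_i+\indic_{j+r-i}+\underbrace{\indic_0+\dots+\indic_0}_{k-\ell-1}\bigr),
\]
a symmetric "transport pair" $\indic_i+\indic_{j+r-i}$ padded by $\indic_j$'s and $\indic_0$'s. This makes $\alpha_j$ piecewise constant on $\lbrace 1,\dots,j-1\rbrace$ (equal to $0$ on $\lbrace 1,\dots,r-1\rbrace$ and $2$ on $\lbrace r,\dots,j-1\rbrace$), which renders (c) and (f) routine — rather than relying on a genuine convexity of some counting function, as you suggest. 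Even so, this Case 1 construction fails (f) precisely when $r=\lceil n/k\rceil$, and the paper must give two further bespoke constructions for that situation (split according to whether $\ell\le k-2$ or $\ell=k-1$). Your hope that (f) "reduces to (c) when $n$ is a multiple of $k$" is true (and noted in the paper), but the non-multiple case is where the difficulty lives, and your proposal contains no mechanism to handle it. In short: correct framing, genuine gap in the construction and in the treatment of (f).
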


\begin{lemma}\label{lemma-slack}Let $n\geq k$ be an integer and let $\psi$ be an $n$-tame scaled distribution on $\lbrace 0,\dots,n\rbrace$. Then there exists an $n$-tame scaled distribution $\phi$ with
\begin{equation}\label{eq-phi-equal}
\phi(0)= (k-1)\phi(n)+(k-2)\phi(n-1)+\dots+\phi(n-k+2)
\end{equation}
(this means that $\phi$ has equality in condition (iii) Definition \ref{defi-tame}) and such that $\psi-\phi$ is a non-negative linear combination of $n$-simple scaled distributions.
\end{lemma}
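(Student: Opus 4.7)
My plan is to produce $\phi$ by a compactness argument on an appropriate set of candidates. Define $L(\xi) := \xi(0) - \sum_{i=0}^{k-2}(k-1-i)\xi(n-i)$; the hypothesis that $\psi$ is $n$-tame gives $L(\psi) \geq 0$, and my goal is to produce an $n$-tame $\phi$ with $L(\phi) = 0$ and $\psi - \phi$ a non-negative combination of $n$-simple scaled distributions.

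First I will dispatch the boundary case $n = k$. Since $\psi$ has mean $n/k = 1$, the identity $\sum_j j\psi(j) = \sum_j \psi(j)$ rearranges to $\psi(0) = \sum_{j=2}^{k}(j-1)\psi(j) = \sum_{i=0}^{k-2}(k-1-i)\psi(n-i)$, so $L(\psi) = 0$ already and I may take $\phi = \psi$ (with the zero combination). From now on I will assume $n \geq k+1$.

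For $n > k$ I will consider the set $\mathcal{F}$ of $n$-tame $\phi'$ such that $\psi - \phi'$ is a non-negative combination of $n$-simples. Any such $\phi'$ lies coordinatewise in $[0, \psi(i)]$ (since simples are non-negative), so $\mathcal{F}$ is compact, and it is nonempty since $\psi \in \mathcal{F}$. The continuous linear functional $L$ attains its minimum on $\mathcal{F}$ at some $\phi^*$, and I will claim $L(\phi^*) = 0$. If instead $L(\phi^*) > 0$, I will derive a contradiction by producing a non-negative combination $\sigma$ of $n$-simples with $L(\sigma) > 0$ such that $\phi^* - c\sigma$ remains in $\mathcal{F}$ for all sufficiently small $c > 0$; then $L(\phi^* - c\sigma) < L(\phi^*)$, violating minimality.

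To construct $\sigma$, the basic building blocks will be the simples $s_a := (k-2)\indic_0 + \indic_a + \indic_{n-a}$ for $a \in \{1,\dots,\lfloor n/2\rfloor\}$; a direct computation using $n > k$ shows $L(s_a) \geq 1$ for $a$ in a suitable middle range (specifically, for $n > 2k-4$ and $2 \leq a \leq k-2$ one has $L(s_a) = a-1$, and for smaller gaps between $n$ and $2k$ one still obtains $L(s_a) \geq 1$ for appropriate $a$). For $\phi^* - c\sigma$ to remain $n$-tame for small $c$, $\sigma$ must respect the active tameness constraints of $\phi^*$: $\sigma(i) = 0$ wherever $\phi^*(i) = 0$; $\sigma$ must be constant on each maximal interval where $\phi^*$ is constant in condition (ii); and $\sigma$ must satisfy $2\sigma(\lfloor n/k\rfloor) \geq \sigma(\lfloor n/k\rfloor-1) + \sigma(\lceil n/k\rceil)$ if (iv) is tight at $\phi^*$. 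Such a $\sigma$ will be obtained by averaging $s_a$'s over symmetry-compatible choices of $a$ matched to the block structure of $\phi^*$, and by excising contributions at zero-support positions. The main obstacle, which I expect to be the crux of the proof, will be verifying via a case analysis on the pattern of active constraints of $\phi^*$ that a valid $\sigma$ with $L(\sigma) > 0$ can always be produced; this is essentially an LP-feasibility check, and its solvability rests on the flexibility afforded by the large family $\{s_a\}$ together with the observation that the active constraints cannot collectively force $L(\phi^*) > 0$ to be minimal within $\mathcal{F}$.
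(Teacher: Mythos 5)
Your framework---compactness plus minimizing $L(\xi) := \xi(0) - \sum_{i=0}^{k-2}(k-1-i)\xi(n-i)$ over the feasible set $\mathcal{F}$---is genuinely different from the paper's argument. The paper runs an explicit descent: choose $j$ maximal with $\psi(j)>0$, use Lemma~\ref{lemma-j-value} to see $j+1\geq 2n/k$, subtract a maximal multiple of the hand-built direction $\alpha_j$ from Lemma~\ref{lemma-alpha-j} while remaining $n$-tame, and observe that either (iii) becomes an equality (done) or $\psi_x(j)=0$ (reduce $j$, then use a minimal-counterexample argument). The inductive structure is precisely what controls the constraint pattern: the descent only ever needs to deal with the active nonnegativity constraint at one known position $j$, and $\alpha_j$ is custom-designed so that (ii), (iv), and mean $n/k$ are all preserved. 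By contrast, at your abstract minimizer $\phi^*$ the active constraints can be arbitrary.

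And that is exactly where the gap sits. To contradict minimality you must exhibit a nonnegative combination $\sigma$ of $n$-simples with $L(\sigma)>0$ that is compatible with every active constraint at $\phi^*$: $\sigma(i)=0$ wherever $\phi^*(i)=0$; $\sigma(i)\leq\sigma(i+1)$ at each $i\in\{1,\dots,n-1\}$ where $\phi^*(i)=\phi^*(i+1)$ (note: weakly increasing, not ``constant'' as you wrote); and, if (iv) is tight at $\phi^*$, $2\sigma(\lfloor n/k\rfloor)\geq\sigma(\lfloor n/k\rfloor-1)+\sigma(\lceil n/k\rceil)$. Your justification for why such a $\sigma$ always exists---``the active constraints cannot collectively force $L(\phi^*)>0$ to be minimal within $\mathcal{F}$''---is exactly the statement to be proved, so as written the argument is circular. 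The difficulty is real: each $s_a$ is supported at only three positions $\{0,a,n-a\}$, so reconciling the monotonicity conditions on large constant blocks of $\phi^*$ with $\sigma=0$ on $\phi^*$'s zero set and $L(\sigma)>0$ simultaneously is a nontrivial LP-feasibility question, and you would need either an exhaustive case analysis over constraint patterns (plausibly heavier than the paper's Lemmas~\ref{lemma-alpha-j} and~\ref{lemma-inequality} combined) or a Farkas-type duality argument; neither is carried out. The compactness setup, the observation that some $s_a$ have $L(s_a)\geq 1$, and the $n=k$ base case are all correct, but the entire content of the lemma lives in the step you have deferred.
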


\begin{lemma}\label{lemma-inequality}Let $n\geq 2k$ be an integer and let $\phi$ be an $n$-tame scaled distribution on $\lbrace 0,\dots,n\rbrace$ with
\[\phi(0)= (k-1)\phi(n)+(k-2)\phi(n-1)+\dots+\phi(n-k+2).\]
Then
\begin{multline*}
\phi(1)\geq \phi(n-1)+2\phi(n-2)+\dots+(k-2)\phi(n-k+2)\\
+(k-1)\phi(n-k+1)+(k-2)\phi(n-k)+\dots+\phi(n-2k+3).
\end{multline*}
\end{lemma}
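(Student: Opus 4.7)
The plan is to reduce the claimed inequality to a purely algebraic one involving only $\phi(2),\ldots,\phi(n-k+1)$ by using conditions (i) and (iii), and then verify this reduced inequality using a carefully chosen linear combination of the monotonicity chain (ii), the convexity bound (iv), and the identities from (i) and (iii).

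First, starting from the mean condition (i), $\sum_{i=0}^n (ki-n)\phi(i) = 0$, I would substitute the equality hypothesis $\phi(0) = \sum_{j=0}^{k-2}(k-1-j)\phi(n-j)$ and isolate the coefficient $k-n$ of $\phi(1)$, obtaining
\[(n-k)\phi(1) = \sum_{i=2}^{n-k+1}(ki-n)\phi(i) + (n-k)\sum_{j=1}^{k-2} j\,\phi(n-j).\]
Writing $R$ as $\sum_{j=1}^{k-2}j\,\phi(n-j) + \sum_{j=k-1}^{2k-3}(2k-2-j)\phi(n-j)$ and subtracting $(n-k)R$, the first part cancels, and re-indexing $i = n-j$ on the second part combines cleanly with the $(ki-n)\phi(i)$ terms for $i\in[n-2k+3,\,n-k+1]$ to yield
\[(n-k)\bigl(\phi(1)-R\bigr) \;=\; \sum_{i=2}^{n-2k+2}(ki-n)\phi(i) + (n-2k)\!\!\sum_{i=n-2k+3}^{n-k+1}\!\!(n-k+1-i)\phi(i)\;=:\;E,\]
so the lemma reduces to proving $E\geq 0$.

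Second, I will observe that the coefficient of $\phi(i)$ in $E$ is strictly negative exactly for $i\in[2,\lfloor n/k\rfloor]$ and non-negative elsewhere. To absorb these negative contributions, I will exploit the inequalities $\phi(1)\geq\phi(j)$ from (ii) for $j=2,\ldots,\lfloor n/k\rfloor$; via the identity above each yields a linear inequality among the $\phi(i)$'s. Taking a non-negative combination of these with weights $c_2,\ldots,c_{\lfloor n/k\rfloor}\geq 0$, together with a multiple $d\geq 0$ of the convexity condition (iv), I will aim to construct an inequality whose left-hand side exactly matches the $(n-ki)\phi(i)$ terms for $i\leq\lfloor n/k\rfloor$. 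The resulting right-hand side is then a specific non-negative weighted sum of $\phi(i)$ for larger $i$, which must be compared against the positive-coefficient portion of $E$.

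Third, to conclude, I would check that the coefficient differences (positive portion of $E$ minus the right-hand side just produced) have all non-negative partial sums when read in order of increasing index $i$; once this is established, Abel summation together with the monotonicity (ii) immediately gives $E\geq 0$. The existence of suitable $c_j,d$ is essentially a small linear-programming feasibility problem, and condition (iv) provides precisely the slack that is missing when $n$ is not divisible by $k$ (in particular, a direct approach using only (ii) fails in that case, as a coefficient-sum check shows).

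The hard part will be the third step: proving in full generality that admissible non-negative weights always exist and verifying the cumulative-sum condition, since the coefficient structure of $E$ shifts depending on the residue of $n$ modulo $k$ and the relative positions of $\lfloor n/k\rfloor$, $\lceil n/k\rceil$, and $n-2k+2$. Some combinatorial bookkeeping, likely organized around the symmetric ``tent'' shape of the RHS coefficients $\min(j,2k-2-j)$, will be needed to make this uniform across all admissible $n$ and $k$.
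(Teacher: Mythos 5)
Your reduction to showing $E \geq 0$ is correct and exactly parallels the paper's reformulation: the quantity $\phi(1)-R$ whose non-negativity you seek (written $E=(n-k)(\phi(1)-R)$) is precisely $(n-k)\lambda(1)$ in the notation of the paper's proof, and your computation of the coefficient formula for $E$, including the tent-shaped right end and the negative window $[2,\lfloor n/k\rfloor]$, checks out. You also correctly pinpoint that condition (iv) is indispensable when $k\nmid n$.

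The gap is that Step~3, which carries the entire mathematical content of the lemma, is left as a plan rather than executed, and it is not the ``small linear-programming feasibility problem'' you anticipate. In the paper this step is handled by introducing a chain of auxiliary quantities $\lambda(0),\dots,\lambda(s-1)$ (with $s=\lfloor n/k\rfloor$), where $\lambda(\ell)$ couples $\phi(\ell)$ with a tail weight chosen so that monotonicity (ii) directly gives $\lambda(1)\geq\dots\geq\lambda(s-1)$; forming the specific combination $r\phi(s)+\sum_{\ell=0}^{s-1}(n/k-\ell)\,\lambda(\ell)$, whose weights $n/k-\ell$ are tuned so that the mean condition makes the whole thing collapse to a manifestly non-negative sum over upper indices; invoking (iv) exactly once to replace $r\phi(s)$ by $r^{2}\phi(s)+\tfrac{1}{2}r(1-r)(\phi(s-1)+\phi(s+1))$; and finally applying Chebyshev's sum inequality (or the rearrangement inequality) to reorganize a product of two monotone sequences. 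Discovering this particular weighted combination and noticing the rearrangement-inequality finish is precisely the hard part; the cumulative-sum verification you sketch would essentially have to reconstruct that structure, and the coefficient patterns do not make a feasible choice of your $c_j$, $d$ apparent. As written, the proposal sets up the right target but does not prove the lemma.
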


\begin{remark}Recall that Proposition \ref{propo-tame-main} states that every $n$-tame scaled distribution $\psi$ is a non-negative linear combination of $n$-simple scaled distributions. It is not hard to see that conditions (i) and (iii) in Definition \ref{defi-tame} are necessary in order for $\psi$ to be a non-negative linear combination of $n$-simple scaled distributions. Conditions (ii) and (iv) on the other hand were chosen because they make the inductive proof of Proposition \ref{propo-tame-main} work. Condition (iv) seems very unnatural and is in fact only needed to make Lemma \ref{lemma-inequality} true. The inequality there can fail by just a slight amount if one does not have this condition (and if $n$ is not divisible by $k$). One can probably replace (iv) by a different condition to ensure the inequality in Lemma \ref{lemma-inequality} (this particular condition was chosen because it is easy to check for the probability distribution $\nu_{m,k}$). However, omitting condition (iv) entirely would make Proposition \ref{propo-tame-main} false.
\end{remark}

\subsection{Deriving Theorem \ref{theo-strict} from Proposition \ref{propo-tame-main}}
\label{check-actual-conj}

First, let us prove the following corollary of Lemma \ref{lemma-inequality}.

\begin{corollary}\label{coro1}
Let $n\geq 0$ be an integer and let $\psi$ be a scaled distribution on $\lbrace 0,\dots,n\rbrace$ with mean $n/k$ that satisfies $\psi(0)\geq \psi(1)\geq \dots\geq \psi(n)$. If $n\geq k$, let us also assume that $2\psi(\lfloor n/k\rfloor)\leq \psi(\lfloor n/k\rfloor-1)+\psi(\lceil n/k\rceil)$. Then $\psi$ is $n$-tame.
\end{corollary}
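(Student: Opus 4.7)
The plan is to verify each of the four conditions (i)--(iv) in the definition of $n$-tame (Definition \ref{defi-tame}). Three of them follow immediately from the hypotheses of the corollary. Condition (i) is precisely the assumption that $\psi$ has mean $n/k$. Condition (ii), which only matters for $n \geq 1$, follows by dropping the first inequality in the assumed full monotonicity chain $\psi(0) \geq \psi(1) \geq \cdots \geq \psi(n)$. Condition (iv), which only matters for $n \geq 2k$, is exactly the convexity inequality $2\psi(\lfloor n/k \rfloor) \leq \psi(\lfloor n/k \rfloor - 1) + \psi(\lceil n/k \rceil)$, which is assumed here for every $n \geq k$.

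The only nontrivial step is verifying condition (iii) for $n \geq k$:
\[
\psi(0) \;\geq\; (k-1)\,\psi(n) + (k-2)\,\psi(n-1) + \cdots + \psi(n-k+2).
\]
I would first rewrite this using the mean identity. The mean hypothesis reads $\sum_{i=0}^n (ki - n)\,\psi(i) = 0$, equivalently $n\psi(0) = \sum_{i=1}^n (ki - n)\,\psi(i)$. Multiplying the target inequality by $n$, substituting this identity on the left, splitting the resulting sum at $i = n-k+1$, and matching the coefficients of $\psi(n-j)$ via the elementary identity $(k(n-j) - n) - n(k-1-j) = j(n-k)$, condition (iii) becomes equivalent to
\[
\sum_{i=1}^{n-k+1} (ki - n)\,\psi(i) \;+\; (n-k)\sum_{j=1}^{k-2} j\,\psi(n-j) \;\geq\; 0.
\]

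To prove this reduced inequality, I would employ a Farkas-type argument. Substitute $\Delta_j := \psi(j) - \psi(j+1) \geq 0$ for $0 \leq j \leq n-1$ and $\Delta_n := \psi(n) \geq 0$, so that $\psi(i) = \sum_{j \geq i}\Delta_j$. Both the mean identity and the displayed target inequality become linear functionals in the $\Delta_j$; it suffices to exhibit a multiplier $\lambda \in \mathbb{R}$ such that the combination has non-negative coefficient on each $\Delta_j$. Explicitly, this requires $Q_j + \lambda M_j \geq 0$ for every $j \in \{0, \ldots, n\}$, where $M_j := (j+1)(kj - 2n)/2$ arises from the mean identity and $Q_j$ is the partial sum (through index $j$) of the coefficients of $\psi(i)$ appearing in the displayed inequality. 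The main obstacle is verifying feasibility of this one-parameter system uniformly in $n \geq k$ and $k \geq 3$: namely, one must show that the upper bounds on $\lambda$ coming from indices with $M_j < 0$ are compatible with the lower bounds coming from $M_j > 0$. This reduces to a direct but somewhat tedious elementary computation on the coefficient sequences $Q_j$ and $M_j$, and is where the full strength of the monotonicity hypothesis (including the first inequality $\psi(0) \geq \psi(1)$, which is not part of condition (ii)) is brought to bear.
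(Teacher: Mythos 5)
Your approach to condition (iii) is genuinely different from the paper's. The paper verifies condition (iii) by a shift trick: it extends $\psi$ to a scaled distribution $\phi$ on $\{0,\dots,n+k\}$ by setting $\phi(0)=0$, $\phi(i)=\psi(i-1)$ for $1\le i\le n+1$, and $\phi(i)=0$ for $n+2\le i\le n+k$, verifies that $\phi$ is $(n+k)$-tame (which uses the corollary's convexity hypothesis $2\psi(\lfloor n/k\rfloor)\le\psi(\lfloor n/k\rfloor-1)+\psi(\lceil n/k\rceil)$ precisely to check condition (iv) for $\phi$), and applies Lemma~\ref{lemma-inequality} to $\phi$. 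Your proposal instead mounts a direct Farkas-type argument on $\psi$ itself, expressing condition (iii) and the mean identity as linear functionals in the first-difference variables $\Delta_j$ and seeking a single multiplier $\lambda$. Notably, if this succeeds it would prove condition (iii) from the mean constraint and full monotonicity alone, without invoking the convexity hypothesis, which is a mild strengthening of the corollary.

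However, the proposal stops precisely where the real work begins. You note that verifying feasibility of the system $Q_j+\lambda M_j\ge 0$ uniformly in $n\ge k$ and $k\ge 3$ ``reduces to a direct but somewhat tedious elementary computation,'' but that computation is not carried out, and it is far from routine. Spot checks show the feasible interval for $\lambda$ can collapse to a single point: for $k=3$, $n=4$ the constraints force $\lambda=1/5$ (upper bound from $j=1$, lower bound from $j=4$), and for $k=10$, $n=11$ they force $\lambda=1/12$ (upper bound from $j=1$, lower bound from $j=11$), so the bound coming from $\max_{j:M_j<0}\lvert M_j\rvert$ has to meet the bound coming from $\max_{j:M_j>0}\lvert Q_j\rvert/M_j$ exactly in those cases, with no slack whatsoever. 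This means the verification cannot be handled by a crude asymptotic estimate; it requires a precise, case-sensitive analysis of both coefficient sequences at the critical indices near $j\approx n/k$ and $j=n$. The paper sidesteps this entirely by routing through Lemma~\ref{lemma-inequality}, whose proof occupies all of Section~\ref{sect-inequality} and is a substantial piece of work in its own right. Until the multiplier's existence is actually established for all $n$ and $k$, what you have is a plausible plan, not a proof; the missing feasibility check is a genuine gap.
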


\begin{proof} We  just need to show that condition (iii) in Definition \ref{defi-tame} is satisfied, all the other conditions are clear from the assumptions of the corollary.
So let us assume that $n\geq k$ (otherwise (iii) is vacuous). Let us define a scaled distribution $\phi$ on $\lbrace 0,\dots, n+k\rbrace$ by taking $\phi(0)=0$ and $\phi(n+2)=\phi(n+3)=\dots=\phi(n+k)=0$ and $\phi(i)=\psi(i-1)$ for $i=1,\dots,n+1$.
We claim that $\phi$ is $(n+k)$-tame:
\begin{itemize}
\item[(i)] $\phi$ has mean $(n+k)/k$, because
\[\sum_{i=0}^{n+k}i\phi(i)=\sum_{i=1}^{n+1}i\phi(i)=\sum_{i=0}^{n}(i+1)\psi(i)=\frac{n}{k}\sum_{i=0}^{n}\psi(i)+\sum_{i=0}^{n}\psi(i)=\frac{n+k}{k}\sum_{i=1}^{n+1}\phi(i)=\frac{n+k}{k}\sum_{i=0}^{n+k}\phi(i).\]
Here we used that $\psi$ has mean $n/k$.
\item[(ii)] From $\psi(0)\geq \psi(1)\geq \dots\geq \psi(n)$ we obtain $\phi(1)\geq \phi(2)\geq \dots\geq \phi(n+1)$, and together with $\phi(n+2)=\phi(n+3)=\dots=\phi(n+k)=0$ this gives $\phi(1)\geq \phi(2)\geq \dots\geq \phi(n+k)$.
\item[(iii)] We have
\begin{equation}\label{phi-all-zero}
\phi(0)= (k-1)\phi(n+k)+(k-2)\phi(n+k-1)+\dots+\phi(n+2),
\end{equation}
because all these terms are zero by the definition of $\phi$.
\item[(iv)] Recall that we assumed $n\geq k$. So by the assumptions on $\psi$ we have $2\psi(\lfloor n/k\rfloor)\leq \psi(\lfloor n/k\rfloor-1)+\psi(\lceil n/k\rceil)$ and therefore
\begin{multline*}
2\phi(\lfloor (n+k)/k\rfloor)=2\phi(\lfloor n/k\rfloor+1)=2\psi(\lfloor n/k\rfloor)\\
\leq\psi(\lfloor n/k\rfloor-1)+\psi(\lceil n/k\rceil)=\phi(\lfloor (n+k)/k\rfloor-1)+\phi(\lceil (n+k)/k\rceil).
\end{multline*}
\end{itemize}
So $\phi$ is indeed $(n+k)$-tame.
Note that $n+k\geq 2k$, since we assumed $n\geq k$. So by (\ref{phi-all-zero}), we can apply Lemma \ref{lemma-inequality} and obtain
\begin{multline*}
\phi(1)\geq \phi(n+k-1)+2\phi(n+k-2)+\dots+(k-2)\phi(n+2)\\
+(k-1)\phi(n+1)+(k-2)\phi(n)+\dots+\phi(n-k+3).
\end{multline*}
When plugging in the definition of $\phi$, we get $\psi(0)\geq (k-1)\psi(n)+(k-2)\psi(n-1)+\dots+\psi(n-k+2)$. Thus, $\psi$ indeed satisfies condition (iii) in Definition \ref{defi-tame}.\end{proof}

Now, we are ready to derive Theorem \ref{theo-strict} from Proposition \ref{propo-tame-main}.

\begin{proof}[Proof of Theorem \ref{theo-strict}]
Let $\psi$ be a probability distribution on $\lbrace 0,\dots,n\rbrace$ with expectation $n/k$ and $\psi(0)> \psi(1)> \dots > \psi(n)>0$. If $n\geq k$, we also assume that $2\psi(\lfloor n/k\rfloor)<\psi(\lfloor n/k\rfloor-1)+\psi(\lceil n/k\rceil)$. We need to prove that $\psi$ occurs as the marginal of an $S_k$-symmetric probability distribution $\tau$ on $T_{n,k}$ with $\tau(t)>0$ for every $t\in T_{n,k}$. By Lemma \ref{lemma-pebody-simple}, it is enough to show that $\psi$, interpreted as a scaled distribution, can be written as a positive linear combination of all the $n$-simple scaled distributions $\indic_{a_1}+\indic_{a_2}+\dots+\indic_{a_k}$ for all $(a_1, a_2,\dots,a_k)\in T_{n,k}$.

So let us interpret $\psi$ as a scaled distribution and let us take some small $x>0$ such that
\[\psi'=\psi-x\cdot \sum_{(a_,\dots,a_k)\in T_{n,k}}(\indic_{a_1}+\dots+\indic_{a_k})\]
is a scaled distribution with $\psi'(0)\geq \psi'(1)\geq \dots\geq \psi'(n)\geq 0$ and with $2\psi'(\lfloor n/k\rfloor)\leq \psi'(\lfloor n/k\rfloor-1)+\psi'(\lceil n/k\rceil)$ if $n\geq k$. Note that $\psi'$ has mean $n/k$, since both $\psi$ and all the $n$-simple scaled distributions $\indic_{a_1}+\dots+\indic_{a_k}$ have mean $n/k$. So by Corollary \ref{coro1}, the scaled distribution $\psi'$ is $n$-tame. Hence, by Proposition \ref{propo-tame-main}, $\psi'$ is a non-negative linear combination on $n$-simple scaled distributions. Now, using $x>0$ and
\[\psi=\psi'+x\cdot \sum_{(a_,\dots,a_k)\in T_{n,k}}(\indic_{a_1}+\dots+\indic_{a_k}),\]
this gives a way to express $\psi$ as a positive linear combination of all the $n$-simple scaled distributions $\indic_{a_1}+\indic_{a_2}+\dots+\indic_{a_k}$ for all $(a_1, a_2,\dots,a_k)\in T_{n,k}$. This finishes the proof of Theorem \ref{theo-strict}.\end{proof}

The next section will be devoted to proving Proposition \ref{propo-tame-main}, assuming the lemmas stated in Subsection \ref{sect-overview}. These lemmas will be proved in Section \ref{sect-lemmas2}.

\section{Proof of Proposition \ref{propo-tame-main}}
\label{sect-marginal-proof}

We will prove Proposition \ref{propo-tame-main} by strong induction on $n$, in each step reducing any instance of the statement for $n$ to an instance of the statement for $n-k$. The base cases $n= 0,\dots,k$ will be treated in the first subsection. The induction step will be performed in the second subsection.

\subsection{The base cases of the induction}

Here we will treat the cases $n=0,\dots,k$ of Proposition \ref{propo-tame-main}.

If $n=0$, then we have as desired
\[\psi = (\psi(0)/k)(\underbrace{\indic_0+\dots+\indic_0}_{k}).\]

Now assume that $1\leq n\leq k$ and keep $n$ fixed. Then Proposition \ref{propo-tame-main} is true by the following claim.

\begin{claim} Let $1\leq n\leq k$. Any scaled distribution $\psi$ on $\lbrace 0,\dots,n\rbrace$ satisfying both
\begin{itemize}
\item[(i)] $\psi$ has mean $n/k$ and
\item[(ii)] $\psi(1)\geq \psi(2)\geq \dots\geq \psi(n)$.
\end{itemize}
can be written as a non-negative linear combination of $n$-simple scaled distributions.
\end{claim}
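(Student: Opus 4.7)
The plan is to prove the claim by strong induction on $\ell := \max\{j \in \{0,\dots,n\} : \psi(j) > 0\}$, with the convention $\ell = 0$ when $\psi \equiv 0$. In the base case $\ell = 0$, the mean condition (i) combined with $n \geq 1$ forces $\psi(0) = 0$, so $\psi = 0$ and the result is vacuous. For $\ell \geq 1$, I would peel off the top of the support of $\psi$ by subtracting a multiple of a carefully chosen non-negative combination of $n$-simple distributions that exactly cancels $\psi(\ell)$.

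To produce the peeling distribution, I would invoke Lemma \ref{lemma-alpha-j} with $j = \ell$: the hypothesis $j+1 \geq 2n/k$ holds since $\ell+1 \geq 2$ and $2n/k \leq 2$ (using $n \leq k$). The lemma supplies a scaled distribution $\alpha_\ell$ that is a non-negative combination of $n$-simple distributions (property (a)), has mean $n/k$ (property (b)), is non-decreasing on $\{1,\dots,\ell\}$ (property (c)), vanishes on $\{\ell+1,\dots,n\}$ (property (d)), and satisfies $\alpha_\ell(\ell) \neq 0$ (property (e)). I then set $c = \psi(\ell)/\alpha_\ell(\ell) > 0$ and consider $\psi' := \psi - c\alpha_\ell$.

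The main step is to verify that $\psi'$ is again a scaled distribution satisfying (i) and (ii) but with strictly smaller top support, so that the induction hypothesis applies. Mean $n/k$ is inherited from (b). By (d) and the choice of $c$, $\psi'$ vanishes on $\{\ell,\ell+1,\dots,n\}$. For $1 \leq i < \ell$, the inequalities $\psi(i) \geq \psi(\ell)$ (from (ii)) and $c\alpha_\ell(i) \leq c\alpha_\ell(\ell) = \psi(\ell)$ (from (c)) give $\psi'(i) \geq 0$, and combining the monotonicity (ii) for $\psi$ with the reverse monotonicity (c) for $\alpha_\ell$ also yields $\psi'(1) \geq \dots \geq \psi'(\ell-1)$. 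Finally, rewriting the mean condition (i) as $\psi'(0) = \frac{1}{n}\sum_{i=1}^n (ki-n)\psi'(i)$, in which every coefficient $ki-n$ is non-negative thanks to $n \leq k$, I conclude $\psi'(0) \geq 0$ from the already-established non-negativity of the $\psi'(i)$.

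Once $\psi'$ is recognized as a scaled distribution satisfying (i) and (ii) with $\max\{j : \psi'(j) > 0\} \leq \ell - 1$, the inductive hypothesis expresses it as a non-negative combination of $n$-simple distributions; adding $c\alpha_\ell$, itself such a combination by (a), yields the desired decomposition of $\psi$. The main obstacle in this plan is really the construction of $\alpha_\ell$, but this is precisely what Lemma \ref{lemma-alpha-j} delivers; the rest of the argument is bookkeeping that exploits $n \leq k$ to keep $\psi'(0)$ non-negative at each peeling step.
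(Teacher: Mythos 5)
Your proof is correct. The core strategy — subtracting an appropriate positive multiple of the distribution $\alpha_j$ from Lemma~\ref{lemma-alpha-j} to shrink the top of the support, and then proceeding by induction on $\max\{j : \psi(j)>0\}$ — is the same as the paper's (the paper phrases it as a minimal-counterexample argument, but that is just the contrapositive of your induction). Where you diverge is in the bookkeeping that keeps $\psi'(0)$ nonnegative. The paper conservatively chooses the subtraction coefficient $x$ to be the largest value for which \emph{both} $\psi(0)-x\alpha_j(0)\geq 0$ and $\psi(j)-x\alpha_j(j)\geq 0$, and therefore must treat two cases: either $\phi(j)=0$, in which case the recursion applies; or $\phi(0)=0$, in which case the paper argues separately that a scaled distribution with mean $n/k\leq 1$ supported in $\{1,\dots,n\}$ forces $n=k$ and support $\{1\}$, and handles that directly. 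You instead set the coefficient to $c=\psi(\ell)/\alpha_\ell(\ell)$ (making $\psi'(\ell)=0$) and then observe that the mean condition rearranges to $\psi'(0)=\tfrac{1}{n}\sum_{i=1}^n(ki-n)\psi'(i)$, where every coefficient $ki-n$ is nonnegative because $n\leq k$, so $\psi'(0)\geq 0$ comes for free from the nonnegativity of $\psi'(1),\dots,\psi'(n)$. This is a clean algebraic shortcut that eliminates the paper's case analysis; in effect, you are proving that the paper's constraint on $\psi_x(0)$ is never the binding one. The only very minor slip is calling the base case ``vacuous'' — when $\psi\equiv 0$ the conclusion is not vacuous but trivially true (the empty combination) — but this does not affect the argument.
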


\begin{proof} 
Suppose that there are counterexamples to the claim for some fixed $1\leq n\leq k$. Then we can find a scaled distribution $\psi$ satisfying (i) and (ii) such that $\psi$ is not a non-negative linear combination of $n$-simple scaled distributions. Clearly, such a $\psi$ is not identically zero. For any such $\psi$, let $j\in \lbrace 0,\dots,n\rbrace$ be chosen maximal with $\psi(j)>0$. Among all possible scaled distributions $\psi$ that contradict the claim, let us choose one where this $j$ is minimal. Thus, $\psi$ is a scaled distribution on $\lbrace 0,\dots,n\rbrace$ satisfying (i) and (ii), but $\psi$ cannot be written as a non-negative linear combination of $n$-simple scaled distributions. Furthermore, we have $\psi(j+1)=\dots=\psi(n)=0$, but $\psi(j)>0$. Finally, by the choice of $\psi$, any scaled distribution $\phi$ satisfying (i) and (ii) and $\phi(j)=\phi(j+1)=\dots=\phi(n)=0$ can be written as a non-negative linear combination of $n$-simple scaled distributions.

Note that $j\neq 0$, because otherwise $\psi$ is supported just on zero and can therefore not have mean $n/k$. Thus, $j\in \lbrace 1,\dots,n\rbrace$ and in particular $j+1\geq 2\geq 2n/k$. Thus, Lemma \ref{lemma-alpha-j} gives a scaled distribution $\alpha_j$ satisfying the conditions (a) to (f). Now choose $x\in \mathbb{R}_{\geq 0}$ maximal such that both $\psi(0)-x\alpha_j(0)\geq 0$ and $\psi(j)-x\alpha_j(j)\geq 0$ (such a maximal $x$ exists since $x=0$ satisfies the conditions, but due to $\alpha_j(j)>0$ by (e) there is an upper bound for $x$). Note that for this maximal $x$ we have $\psi(0)-x\alpha_j(0)=0$ or $\psi(j)-x\alpha_j(j)= 0$.

Now set $\phi=\psi-x\alpha_j$. We claim that $\phi$ is a scaled distribution satisfying (i) and (ii). Note that we have
\[\phi(0)=\psi(0)-x\alpha_j(0)\geq 0\]
and
\[\phi(j)=\psi(j)-x\alpha_j(j)\geq 0.\]
Furthermore by (ii) and (c) we have
$\psi(1)\geq \psi(2)\geq \dots\geq \psi(j)$ and $\alpha_j(1)\leq \alpha_j(2)\leq \dots\leq \alpha_j(j)$, hence
\[\phi(1)\geq \phi(2)\geq \dots\geq \phi(j)\geq 0.\]
From $\psi(j+1)=\dots=\psi(n)=0$ and (d) we can deduce that $\phi(j+1)=\dots=\phi(n)=0$. In particular, we have established that $\phi$ has non-negative values, so it is a scaled distribution. We also have
\[\phi(1)\geq \phi(2)\geq \dots\geq \phi(j)\geq 0=\phi(j+1)=\dots=\phi(n),\]
so $\phi$ satisfies (ii). Finally $\phi=\psi-x\alpha_j$ has mean $n/k$ because both $\psi$ and $\alpha_j$ have mean $n/k$ (see (i) and (b)). Thus, $\phi$ also satisfies (i). 

We claim that $\phi$ is a non-negative linear combination of $n$-simple scaled distributions. Recall that by the choice of $x$ we have $\psi(0)-x\alpha_j(0)=0$ or $\psi(j)-x\alpha_j(j)= 0$. This means $\phi(0)=0$ or $\phi(j)=0$. If $\phi(j)=0$, then $\phi$ is a scaled distribution satisfying (i) and (ii) and $\phi(j)=\phi(j+1)=\dots=\phi(n)=0$. We saw above that by the choice of $\psi$ this indeed implies that $\phi$ is a non-negative linear combination of $n$-simple scaled distributions.

So suppose now that $\phi(0)=0$. Then $\phi$ is supported on a subset of $\lbrace 1,\dots n\rbrace$, but has mean $n/k\leq 1$. This is only possible if $n=k$ and $\phi$ is supported on $\lbrace 1\rbrace$. But then
\[\phi = (\phi(1)/k)(\underbrace{\indic_1+\dots+\indic_1}_{k})\]
and therefore $\phi$ is a non-negative linear combination of $n$-simple scaled distributions.

So we have shown in any case that $\phi$ is a non-negative linear combination of $n$-simple scaled distributions. But now, using (a), we see that $\psi=\phi+x\alpha_j$ is also a non-negative linear combination of $n$-simple scaled distributions. This contradicts our choice of $\psi$. Hence there cannot be any counterexamples to the claim, so the claim is true.\end{proof}

\subsection{The induction step}
We will  now perform the induction step for proving Proposition \ref{propo-tame-main}. Let us assume that $n\geq k+1$ and that we have already proved Proposition \ref{propo-tame-main} for all smaller values of $n$.
Let $\psi$ be an $n$-tame scaled distribution on $\lbrace 0,\dots,n\rbrace$. We need to show that $\psi$ can be written as a non-negative linear combination of $n$-simple scaled distributions.

First, we apply Lemma \ref{lemma-slack} to obtain an $n$-tame scaled distribution $\phi$ satisfying (\ref{eq-phi-equal}) and such that $\psi-\phi$ is a non-negative linear combination of $n$-simple scaled distributions.
If we can write $\phi$ as a non-negative linear combination of $n$-simple scaled distributions, then $\psi=\phi+ (\psi-\phi)$ will also be a non-negative linear combination of $n$-simple scaled distributions. Hence it suffices to show that $\phi$ is a non-negative linear combination of $n$-simple scaled distributions.

If $n\geq 2k$, then we can apply Lemma \ref{lemma-inequality} and obtain
\begin{multline}\label{eq-phi1}
\phi(1)-\phi(n-1)-2\phi(n-2)-\dots-(k-2)\phi(n-k+2)\\
\geq (k-1)\phi(n-k+1)+(k-2)\phi(n-k)+\dots+\phi(n-2k+3).
\end{multline}
Let us now define a new scaled distribution $\thet$ on $\lbrace 0,\dots,n\rbrace$ by
\[\thet = \phi - \sum_{i=0}^{k-2}\phi(n-i)(\indic_{n-i}+\underbrace{\indic_1+\dots+\indic_1}_{i}+\underbrace{\indic_0+\dots+\indic_0}_{k-1-i}).\]
Note that $\thet$ satisfies
\begin{equation}\label{eq-thet1}
\sum_{i=0}^{n} i\thet(i)=\frac{n}{k} \sum_{i=0}^{n}\thet(i),
\end{equation}
since $\thet$ is a linear combination of $\phi$ and certain $n$-simple scaled distributions, all of which have mean $n/k$.
Let us show that $\thet$ indeed has non-negative values. For $\thet(0)$, we have
\[\thet(0)=\phi(0)-(k-1)\phi(n)-(k-2)\phi(n-1)-\dots-\phi(n-k+2)=0,\]
where for the second equality sign we used (\ref{eq-phi-equal}). We also have that
\[\thet(n)=\thet(n-1)=\dots=\thet(n-k+2)=0.\]
Finally, $\thet(i)=\phi(i)\geq 0$ for $2\leq i\leq n-k+1$. 
Therefore, $\thet(i)\geq 0$ for $i=0$ and for $2\leq i\leq n$. Let us now check that $\thet(1)\geq 0$ as well. If $k+1\leq n<2k$, using (\ref{eq-thet1}) and $\thet(0)=0$, we have
\[\sum_{i=2}^{n}\left(i-\frac{n}{k}\right)\thet(i) =  \left(\frac{n}{k}-1\right)\thet(1)+\frac{n}{k}\thet(0)=\left(\frac{n}{k}-1\right)\thet(1).\]
Since the left-hand side is non-negative, the right-hand side must also be non-negative and therefore $\thet(1)\geq 0$.
If $n\geq 2k$, then (\ref{eq-phi1}) implies
\begin{multline}\label{eq-phi2}
\thet(1)=\phi(1)-\phi(n-1)-2\phi(n-2)-\dots-(k-2)\phi(n-k+2)\\
\geq (k-1)\phi(n-k+1)+(k-2)\phi(n-k)+\dots+\phi(n-2k+3)\\
= (k-1)\thet(n-k+1)+(k-2)\thet(n-k)+\dots+\thet(n-2k+3).
\end{multline}
and in particular $\thet(1)\geq 0$. This proves that $\thet$ has non-negative values and is therefore a scaled distribution.

We have already seen that it suffices to prove that $\phi$ is a non-negative linear combination of $n$-simple scaled distributions. Given that
\[\phi=\thet+ \sum_{i=0}^{k-2}\phi(n-i)(\indic_{n-i}+\underbrace{\indic_1+\dots+\indic_1}_{i}+\underbrace{\indic_0+\dots+\indic_0}_{k-1-i}),\]
it is therefore sufficient to show that $\thet$ is a non-negative linear combination of $n$-simple scaled distributions.

Using property (ii) of $\phi$ together with $\thet(i)=\phi(i)$ for $2\leq i\leq n-k+1$, we obtain
\[\thet(2)\geq \thet(3)\geq \dots\geq \thet(n-k+1).\]

Also, if $n\geq 3k$, then $2\leq \lfloor n/k\rfloor-1< \lfloor n/k\rfloor\leq \lceil n/k\rceil\leq n-k+1$ and therefore property (iv) of $\phi$ implies
\[2\thet(\lfloor n/k\rfloor)\leq \thet(\lfloor n/k\rfloor-1)+\thet(\lceil n/k\rceil).\]

Now, let us define a scaled distribution $\eta$ on $\lbrace 0,\dots, n-k\rbrace$ by $\eta(i)=\thet(i+1)$ for $i=0,\dots, n-k$. We will prove that $\eta$ is an $(n-k)$-tame scaled distribution and then use the induction assumption. So let us check that $\eta$ satisfies conditions (i) to (iv) for an $(n-k)$-tame scaled distribution (see Definition \ref{defi-tame}):

\begin{itemize}
\item[(i)] Recall that $\thet(0)=0$ and $\thet(n)=\thet(n-1)=\dots=\thet(n-k+2)=0$. Now, using (\ref{eq-thet1}), we obtain
\begin{multline*}
\sum_{i=0}^{n-k} i\eta(i)=\sum_{i=1}^{n-k+1} (i-1)\thet(i)=\sum_{i=0}^{n} (i-1)\thet(i)=\sum_{i=0}^{n} i\thet(i)-\sum_{i=0}^{n}\thet(i)=\\
=\left(\frac{n}{k}-1\right) \sum_{i=0}^{n}\thet(i)=\frac{n-k}{k} \sum_{i=1}^{n-k+1}\thet(i)=\frac{n-k}{k} \sum_{i=0}^{n-k}\eta(i).
\end{multline*}
\item[(ii)] Recall that $\thet(2)\geq \thet(3)\geq \dots\geq \thet(n-k+1)$. This directly implies $\eta(1)\geq \eta(2)\geq \dots\geq \eta(n-k)$.
\item[(iii)] Assume that $n-k\geq k$, which means $n\geq 2k$. Then (\ref{eq-phi2}) implies
\[\eta(0)\geq (k-1)\eta(n-k)+(k-2)\eta(n-k-1)+\dots+\eta(n-2k+2).\]
\item[(iv)] Assume that $n-k\geq 2k$, which means $n\geq 3k$. Recall that we have $2\thet(\lfloor n/k\rfloor)\leq \thet(\lfloor n/k\rfloor-1)+\thet(\lceil n/k\rceil)$ in this case. Thus,
\[2\eta(\lfloor (n-k)/k\rfloor)=2\thet(\lfloor n/k\rfloor)\leq \thet(\lfloor n/k\rfloor-1)+\thet(\lceil n/k\rceil)=\eta(\lfloor (n-k)/k\rfloor-1)+\eta(\lceil (n-k)/k\rceil).\]
\end{itemize}
Hence, $\eta$ is indeed an $(n-k)$-tame scaled distribution. By the induction assumption for $n-k$, we can conclude that $\eta$ can be written as a non-negative linear combination of $(n-k)$-simple scaled distributions. So let
\[\eta=\sum_{(a_1,\dots,a_k)\in T_{n-k,k}}\lambda(a_1,\dots,a_k)\cdot (\indic_{a_1}+\dots+\indic_{a_k})\]
with $\lambda(a_1,\dots,a_k)\geq 0$ for all $(a_1,\dots,a_k)\in T_{n-k,k}$. Using that $\eta(i)=\thet(i+1)$ for $i=0,\dots,n-k$ as well as $\thet(0)=0$ and $\thet(n)=\thet(n-1)=\dots=\thet(n-k+2)=0$, we obtain
\[\thet=\sum_{(a_1,\dots,a_k)\in T_{n-k,k}}\lambda(a_1,\dots,a_k)\cdot (\indic_{a_1+1}+\dots+\indic_{a_k+1}).\]
Note that for every $(a_1,\dots,a_k)\in T_{n-k,k}$ we have $(a_1+1,\dots,a_k+1)\in T_{n,k}$. Thus, for every $(a_1,\dots,a_k)\in T_{n-k,k}$ the scaled distribution $\indic_{a_1+1}+\dots+\indic_{a_k+1}$ is $n$-simple. So the above equation establishes that $\thet$ is a non-negative linear combination of $n$-simple scaled distributions. This finishes the proof of Proposition \ref{propo-tame-main}.

\section{Proof of Lemmas \ref{lemma-alpha-j}, \ref{lemma-slack} and \ref{lemma-inequality}}
\label{sect-lemmas2}

\subsection{Proof of Lemma \ref{lemma-alpha-j}}

We will prove Lemma \ref{lemma-alpha-j} by constructing the scaled distributions $\alpha_j$ explicitly (distinguishing several cases).

\begin{proof}[Proof of Lemma \ref{lemma-alpha-j}] Recall that $n\geq 1$ and $j\in \lbrace 1,\dots,n\rbrace$ with $j+1\geq 2n/k$. First, let us check that $j\geq n/k$. If $n\geq k$, this follows from
\[j\geq \frac{2n}{k}-1\geq \frac{n}{k}+1-1=\frac{n}{k}.\]
If $n<k$, then $j\geq 1\geq n/k$. So we indeed have $j\geq n/k$ in either case.

Now let $n=\l j+r$ for integers $\l$ and $r$ with $0\leq r\leq j-1$ (so $r$ is the remainder of $n$ upon division by $j$). As $n/k\leq j\leq n$, we have $1\leq \l\leq k$.
If $\l=k$, then from $j\geq n/k$ we get that actually $j=n/k$ and we can just take
\[\alpha_j=\underbrace{\indic_{n/k}+\dots +\indic_{n/k}}_{k}=k\indic_{n/k}.\]
It is easy to check that this $\alpha_j$ satisfies the conditions (a) to (f) in Lemma \ref{lemma-alpha-j}.
So from now on let us assume that $1\leq \l\leq k-1$. We will distinguish several cases. In each case, we will give an explicit definition of $\alpha_j$ and then check the conditions (a) to (f) in Lemma \ref{lemma-alpha-j}.

\textbf{Case 1: $r\neq \lceil n/k\rceil$ or $n<2k$.} In this case, set
\[\alpha_j=\sum_{i=r}^{j} (\underbrace{\indic_j+\dots+\indic_j}_{\l-1}+\indic_{i}+\indic_{j+r-i}+ \underbrace{\indic_0+\dots+\indic_0}_{k-\l-1}).\]
Evaluating this gives
\[\alpha_j(i)=\begin{cases}
(k-\l-1)(j+1-r)&\text{if }i=0\\
0&\text{if }1\leq i\leq r-1\\
2&\text{if }r\leq i\leq j-1\\
(\l-1)(j+1-r)+2&\text{if }i=j\\
0&\text{if }j+1\leq i
\end{cases}\]
if $r\geq 1$ and
\[\alpha_j(i)=\begin{cases}
(k-\l-1)(j+1-r)+2&\text{if }i=0\\
2&\text{if }1\leq i\leq j-1\\
(\l-1)(j+1-r)+2&\text{if }i=j\\
0&\text{if }j+1\leq i
\end{cases}\]
if $r= 0$.
By definition, $\alpha_j$ satisfies (a) and therefore automatically also (b). By looking at the evaluations above, it is easy to see that $\alpha_j$ satisfies (c), (d) and (e). It remains to check (f). If $n<2k$, then (f) is vacuously true, so let us assume $n\geq 2k$. Then
\[j\geq \frac{2n}{k}-1\geq \frac{n}{k}+2-1= \frac{n}{k}+1>\lceil n/k\rceil.\]
Thus, $1\leq \lfloor n/k\rfloor-1\leq \lfloor n/k\rfloor \leq \lceil n/k\rceil\leq j-1$. Hence, each $i\in \lbrace \lfloor n/k\rfloor-1, \lfloor n/k\rfloor, \lceil n/k\rceil\rbrace$ satisfies $\alpha_j(i)\in \lbrace 0,2\rbrace$, and $\alpha_j(i)=2$ if and only if $i\geq r$. We need to show
\begin{equation}\label{eq-f}
2\alpha_j(\lfloor n/k\rfloor)\geq \alpha_j(\lfloor n/k\rfloor-1)+\alpha_j(\lceil n/k\rceil)
\end{equation}

Recall that we assumed $r\neq \lceil n/k\rceil$ for this case. If $r>\lceil n/k\rceil$, then each of the values  $\alpha_j(\lfloor n/k\rfloor)$, $ \alpha_j(\lfloor n/k\rfloor-1)$ and $\alpha_j(\lceil n/k\rceil)$ is zero, hence (\ref{eq-f}) is satisfied. If $r<\lceil n/k\rceil$, then $r\leq \lfloor n/k\rfloor$ and so $\alpha_j(\lfloor n/k\rfloor)=2$. Thus
\[2\alpha_j(\lfloor n/k\rfloor)=4=2+2\geq \alpha_j(\lfloor n/k\rfloor-1)+\alpha_j(\lceil n/k\rceil)\]
and (\ref{eq-f}) is satisfied as well.

\textbf{Case 2: $r= \lceil n/k\rceil$, $n\geq2k$, and $\l\leq k-2$.} In this case, set
\[\alpha_j=\sum_{i=r+1}^{j-1} (\underbrace{\indic_j+\dots+\indic_j}_{\l-1}+\indic_{i}+\indic_{j+r-i}+ \underbrace{\indic_0+\dots+\indic_0}_{k-\l-1})
+\frac{2}{r+1}\sum_{i=0}^{r} (\underbrace{\indic_j+\dots+\indic_j}_{\l}+\indic_{i}+\indic_{r-i}+ \underbrace{\indic_0+\dots+\indic_0}_{k-\l-2}). \]
Evaluating this gives
\[\alpha_j(i)=\begin{cases}
(k-\l-1)(j-1-r)+2(k-\l-2)+\frac{4}{r+1}&\text{if }i=0\\
\frac{4}{r+1}&\text{if }1\leq i\leq r\\
2&\text{if }r+1\leq i\leq j-1\\
(\l-1)(j-1-r)+2\l&\text{if }i=j\\
0&\text{if }j+1\leq i.
\end{cases}\]
Recall that $j-1-r\geq 0$ and $\l\geq 1$. Also, recall that $r=\lceil n/k\rceil\geq 2$. By definition, $\alpha_j$ satisfies (a) and therefore also (b). By looking at the evaluation above, it is easy to see that $\alpha_j$ satisfies (c), (d) and (e). In order to check (f), note that $1\leq \lfloor n/k\rfloor-1\leq \lfloor n/k\rfloor \leq \lceil n/k\rceil=r$. Hence we have $\alpha_j(\lfloor n/k\rfloor)=\alpha_j(\lfloor n/k\rfloor-1)=\alpha_j(\lceil n/k\rceil)=\frac{4}{r+1}$ and so (f) is satisfied as well.

\textbf{Case 3: $r= \lceil n/k\rceil$, $n\geq2k$, and $\l= k-1$.} In this case, we have $(k-1)j+r=n$, thus $kj=n+(j-r)$. Hence
\[n+(j-r)=kj\geq k\left(\frac{2n}{k}-1\right)\geq k\left(\frac{n}{k}+2-1\right)=k\left(\frac{n}{k}+1\right)=n+k.\]
So $j-r\geq k\geq 3$ and therefore $j-1-r\geq 2$.
Now set
\[\alpha_j=\sum_{i=r+1}^{j-1} (\underbrace{\indic_j+\dots+\indic_j}_{k-2}+\indic_{i}+\indic_{j+r-i})\]
Evaluating this gives
\[\alpha_j(i)=\begin{cases}
0&\text{if }0\leq i\leq r\\
2&\text{if }r+1\leq i\leq j-1\\
(k-2)(j-1-r)&\text{if }i=j\\
0&\text{if }j+1\leq i.
\end{cases}\]
Recall that $j-1-r\geq 2$ and $k\geq 3$. Hence $(k-2)(j-1-r)\geq 2$. Also recall that $r= \lceil n/k\rceil$.
By definition, $\alpha_j$ satisfies (a) and therefore automatically also (b). By looking at the evaluation above, it is easy to see that $\alpha_j$ satisfies (c), (d) and (e). In order to check (f), note that $\lfloor n/k\rfloor-1\leq \lfloor n/k\rfloor \leq \lceil n/k\rceil=r$. Hence we have $\alpha_j(\lfloor n/k\rfloor)=\alpha_j(\lfloor n/k\rfloor-1)=\alpha_j(\lceil n/k\rceil)=0$ and so (f) is satisfied as well.
This finishes the proof of Lemma \ref{lemma-alpha-j}.\end{proof}

\subsection{Proof of Lemma \ref{lemma-slack}}

Here, we will derive Lemma \ref{lemma-slack} from Lemma \ref{lemma-alpha-j}. However, we first need another lemma:

\begin{lemma}\label{lemma-j-value} Let $n\geq 1$ and let $\psi$ be an $n$-tame scaled distribution on $\lbrace 0,\dots,n\rbrace$ that is not identically zero. Let $j\in \lbrace 0,\dots,n\rbrace$ be chosen maximal with $\psi(j)>0$. Then $j+1\geq 2n/k$.
\end{lemma}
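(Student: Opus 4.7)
The plan is to use only conditions (i) (mean $n/k$) and (ii) (the non-increasing property on $\{1,\dots,n\}$) from the definition of $n$-tame; conditions (iii) and (iv) will not be needed. Since $\psi(i)=0$ for all $i>j$ (by maximality of $j$), the support of $\psi$ is contained in $\{0,1,\dots,j\}$, and condition (ii) restricted to the support gives $\psi(1)\geq \psi(2)\geq \cdots\geq \psi(j)$. The strategy is to show that any non-zero scaled distribution with such a non-increasing tail can have mean at most $(j+1)/2$, and then compare with the prescribed mean $n/k$.

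First I would dispose of the degenerate case $j=0$: if $j=0$, then $\psi$ is supported on $\{0\}$, so $\sum_i i\psi(i)=0$, whereas condition (i) gives $\sum_i i\psi(i)=(n/k)\sum_i\psi(i)>0$ since $n\geq 1$ and $\psi$ is not identically zero; contradiction. So $j\geq 1$, and in particular the inequality $\psi(1)\geq\cdots\geq\psi(j)$ has content.

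The key step is the inequality
\[\sum_{i=1}^{j} i\,\psi(i)\;\leq\;\frac{j+1}{2}\sum_{i=1}^{j}\psi(i).\]
This follows from a rearrangement-style argument: since $\psi(1)\geq\psi(2)\geq\cdots\geq\psi(j)\geq 0$ and the sequence $1,2,\dots,j$ is increasing while $j,j-1,\dots,1$ is decreasing, the rearrangement inequality gives
\[\sum_{i=1}^{j} i\,\psi(i)\;\leq\;\sum_{i=1}^{j}(j+1-i)\,\psi(i),\]
and adding these two expressions yields the desired bound. (Alternatively, one can prove the same inequality by pairing $i$ with $j+1-i$ directly.)

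Finally I combine: using $\psi(i)=0$ for $i>j$, the left-hand side of the bound above equals $\sum_{i=0}^{n} i\psi(i)$, and $\sum_{i=1}^j \psi(i)\leq \sum_{i=0}^n \psi(i)$. By condition (i), $\sum_{i=0}^{n} i\psi(i)=(n/k)\sum_{i=0}^{n}\psi(i)$, and since $\psi$ is not identically zero, $\sum_{i=0}^{n}\psi(i)>0$. Dividing gives $n/k\leq (j+1)/2$, i.e.\ $j+1\geq 2n/k$. There is no real obstacle here; the content of the lemma is just the observation that a non-increasing distribution on $\{1,\dots,j\}$ (plus possibly some mass at $0$, which only lowers the mean) cannot have mean exceeding $(j+1)/2$.
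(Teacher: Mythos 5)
Your proof is correct and takes essentially the same route as the paper: both hinge on the observation that a non-increasing sequence on $\{1,\dots,j\}$ has centroid at most $(j+1)/2$, established by comparing $\sum_i i\psi(i)$ with $\sum_i (j+1-i)\psi(i)$ (the paper does the pairing $i\leftrightarrow j+1-i$ explicitly, which you note as your alternative), and then comparing with the mean condition (i). Your explicit dismissal of the $j=0$ case is a small stylistic addition; the paper's chain of inequalities handles it implicitly.
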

\begin{proof}By property (ii) in Definition \ref{defi-tame} we have $\psi(i)\geq  \psi(j+1-i)$ for $1\leq i\leq (j+1)/2$. Hence,
\begin{multline*}
i\psi(i)+(j+1-i)\psi(j+1-i)=\frac{j+1}{2}(\psi(i)+\psi(j+1-i))-\left(\frac{j+1}{2}-i\right)(\psi(i)-\psi(j+1-i))\\
\leq \frac{j+1}{2}(\psi(i)+\psi(j+1-i))
\end{multline*}
for $1\leq i\leq (j+1)/2$. Thus, by symmetry between $i$ and $j+1-i$ we actually obtain
\begin{equation}\label{eq-psi1}
i\psi(i)+(j+1-i)\psi(j+1-i)\leq \frac{j+1}{2}(\psi(i)+\psi(j+1-i))
\end{equation}
for $1\leq i\leq j$.
Note that by the choice of $j$ we have $\psi(i)=0$ for all $i>j$. So using (\ref{eq-psi1}) and property (i) in Definition \ref{defi-tame}, we obtain
\begin{multline*}
\frac{2n}{k}\sum_{i=0}^{n} \psi(i)=2\sum_{i=0}^{n} i\psi(i)=2\sum_{i=1}^{j} i\psi(i)=\sum_{i=1}^{j} i\psi(i)+\sum_{i=1}^{j} (j+1-i)\psi(j+1-i)\\
\leq \sum_{i=1}^{j} \frac{j+1}{2}(\psi(i)+\psi(j+1-i))=\frac{j+1}{2}\sum_{i=1}^{j} \psi(i)+\frac{j+1}{2}\sum_{i=1}^{j} \psi(j+1-i)=(j+1)\sum_{i=1}^{j} \psi(i)\leq (j+1)\sum_{i=0}^{n} \psi(i).
\end{multline*}
As $\sum_{i=0}^{n} \psi(i)>0$, this yields $j+1\geq 2n/k$ as desired.
\end{proof}

Now we are ready for the proof of Lemma \ref{lemma-slack}.

\begin{proof}[Proof of Lemma \ref{lemma-slack}]Recall that $n\geq k$. Suppose there is a counterexample to Lemma \ref{lemma-slack}. That means, one can find an $n$-tame scaled distribution $\psi$ for which there exists no $n$-tame scaled distribution $\phi$ with the desired conditions. Clearly, such a $\psi$ is not identically zero (because then we could just take $\phi$ to be identically zero as well). So for any such $\psi$, let $j\in \lbrace 0,\dots,n\rbrace$ be chosen maximal with $\psi(j)>0$. Among all possible scaled distributions $\psi$ that contradict Lemma \ref{lemma-slack}, let us choose one where this $j$ is minimal. Thus, $\psi$ is an $n$-tame scaled distribution on $\lbrace 0,\dots,n\rbrace$, but there exists no $\phi$ with the desired properties. And $j\in \lbrace 0,\dots,n\rbrace$ is maximal with $\psi(j)>0$. This means $\psi(j+1)=\dots=\psi(n)=0$, but $\psi(j)>0$. Now, by the choice of $\psi$, for any $n$-tame scaled distribution $\psi'$ with $\psi'(j)=\psi'(j+1)=\dots=\psi'(n)=0$ we can find an $n$-tame scaled distribution $\phi'$ satisfying (\ref{eq-phi-equal}) such that $\psi'-\phi'$ is a non-negative linear combination of $n$-simple scaled distributions.

Note that by Lemma \ref{lemma-j-value} we have $j+1\geq 2n/k$ and in particular $j\geq (2n/k)-1\geq 1$. Thus, Lemma \ref{lemma-alpha-j} gives a scaled distribution $\alpha_j$ satisfying conditions (a) to (f). For each $x\in \mathbb{R}_{\geq 0}$ define a map $\psi_x: \lbrace 0,\dots,n\rbrace\to \mathbb{R}$ by
\[\psi_x=\psi-x\alpha_j.\]
Now, choose the maximum $x\in \mathbb{R}_{\geq 0}$ such that
\[\psi_x(j)=\psi(j)-x\alpha_j(j)\geq 0\]
and
\[\psi_x(0)\geq (k-1)\psi_x(n)+(k-2)\psi_x(n-1)+\dots+\psi_x(n-k+2).\]
Such a maximum $x$ exists since $x=0$ satisfies the conditions (for the second condition see property (iii) of $\psi$), but due to $\alpha_j(j)>0$ by (e) there is an upper bound for $x$. Note that for this maximum $x$ we have $\psi_x(j)=0$ or $\psi_x(0)= (k-1)\psi_x(n)+(k-2)\psi_x(n-1)+\dots+\psi_x(n-k+2)$.

We claim that $\psi_x$ is an $n$-tame scaled distribution. Recall that $\psi_x(j)\geq 0$. By (ii) for $\psi$ and (c) we have
$\psi(1)\geq \psi(2)\geq \dots\geq \psi(j)$ and $\alpha_j(1)\leq \alpha_j(2)\leq \dots\leq \alpha_j(j)$, hence
\[\psi_x(1)\geq \psi_x(2)\geq \dots\geq \psi_x(j)\geq 0.\]
From $\psi(j+1)=\dots=\psi(n)=0$ and (d) we can deduce that
\begin{equation}\label{eq-phi-x}
\psi_x(j+1)=\dots=\psi_x(n)=0.
\end{equation}
In particular, $\psi_x(i)\geq 0$ for $1\leq i\leq n$. Now recall that $n\geq k$ and 
\[\psi_x(0)\geq (k-1)\psi_x(n)+(k-2)\psi_x(n-1)+\dots+\psi_x(n-k+2),\]
hence $\psi_x(0)\geq 0$ as well. So we have established that $\psi_x$ has non-negative values, so it is a scaled distribution. We also have
\[\psi_x(1)\geq \psi_x(2)\geq \dots\geq \psi_x(j)\geq 0=\psi_x(j+1)=\dots=\psi_x(n),\]
so $\psi_x$ satisfies (ii). Furthermore $\psi_x=\psi-x\alpha_j$ has mean $n/k$ because both $\psi$ and $\alpha_j$ have mean $n/k$ (see (i) for $\psi$ and (b)). Thus, $\psi_x$ also satisfies (i). Note that (iii) is satisfied since we chose $x$ such that
\[\psi_x(0)\geq (k-1)\psi_x(n)+(k-2)\psi_x(n-1)+\dots+\psi_x(n-k+2).\]
It remains to check (iv). If $n<2k$, then (iv) is vacuous, so assume $n\geq 2k$. Then by (iv) for $\psi$ and (f) we have $2\psi(\lfloor n/k\rfloor)\leq \psi(\lfloor n/k\rfloor-1)+\psi(\lceil n/k\rceil)$ and $2\alpha_j(\lfloor n/k\rfloor)\geq \alpha_j(\lfloor n/k\rfloor-1)+\alpha_j(\lceil n/k\rceil)$, hence
\[2\psi_x(\lfloor n/k\rfloor)\leq \psi_x(\lfloor n/k\rfloor-1)+\psi_x(\lceil n/k\rceil).\]

So $\psi_x$ is indeed an $n$-tame scaled distribution. Recall that by the choice of $x$ we have $\psi_x(j)=0$ or $\psi_x(0)= (k-1)\psi_x(n)+(k-2)\psi_x(n-1)+\dots+\psi_x(n-k+2)$. Suppose the latter, then $\psi_x$ would be an $n$-tame scaled distribution satisfying (\ref{eq-phi-equal}) and $\psi-\psi_x=x\alpha_j$ would be a non-negative linear combination of $n$-tame scaled distributions (see (a)). This contradicts our assumption of $\psi$ being a counterexample to Lemma \ref{lemma-slack}. 

Hence we must have $\psi_x(j)=0$. Thus, together with (\ref{eq-phi-x}) we obtain $\psi_x(j)=\psi_x(j+1)=\dots=\psi_x(n)=0$. So $\psi_x$ is an $n$-tame scaled distribution with $\psi_x(j)=\psi_x(j+1)=\dots=\psi_x(n)=0$. We saw above that by the choice of $\psi$ this implies that for $\psi_x$ we can find an $n$-tame scaled distribution $\phi$ satisfying (\ref{eq-phi-equal}) such that $\psi_x-\phi$ is a non-negative linear combination of $n$-simple scaled distributions. But then, using (a), we obtain that $\psi-\phi=(\psi_x-\phi)+x\alpha_j$ is also a non-negative linear combination of $n$-simple scaled distributions. This contradicts our choice of $\psi$. Hence there cannot be any counterexamples to  Lemma \ref{lemma-slack}, so Lemma \ref{lemma-slack} is true.
\end{proof}

\subsection{Proof of Lemma \ref{lemma-inequality}}
\label{sect-inequality}

The goal of this subsection is to prove Lemma \ref{lemma-inequality}. So assume $n\geq 2k$ and let $\phi$ be an $n$-tame scaled distribution on $\lbrace 0,\dots,n\rbrace$ satisfying
\begin{equation}\label{phi-eq-ineq}
\phi(0)= (k-1)\phi(n)+(k-2)\phi(n-1)+\dots+\phi(n-k+2).
\end{equation}
Let $s=\lfloor n/k\rfloor$ and $r=\frac{n}{k}-s$, so $0\leq r<1$ and $n=k(r+s)$. Also note that $s\geq 2$.

Set
\[\lambda(0)=\phi(0)-\sum_{i=n-(k-1)+1}^{n}(i-n+(k-1))\phi(i)
=\phi(0) - \phi(n-k+2)-2\phi(n-k+3)-\dots-(k-1)\phi(n)\]
and note that by (\ref{phi-eq-ineq}) we have $\lambda(0)=0$. Furthermore, for each $1\leq \l\leq s-1$, set
\[\lambda(\l)=\phi(\l)-\sum_{i=n-(\l+1)(k-1)+1}^{n-\l(k-1)}(i-n+(\l+1)(k-1))\phi(i)
-\sum_{i=n-\l(k-1)+1}^{n-(\l-1)(k-1)}(n-(\l-1)(k-1)-i)\phi(i).\]
For each index $i$ occurring in the sums we have $i\geq n-(\l+1)(k-1)+1\geq n-s(k-1)+1\geq s+1$. Note that we can rewrite $\lambda(\l)$ for $1\leq \l\leq s-1$ as
\[\lambda(\l)=\phi(\l)-\sum_{j=1}^{k-1}j\cdot \phi(n-(\l+1)(k-1)+j)
-\sum_{j=1}^{k-1}(k-1-j)\cdot \phi(n-\l(k-1)+j).\]
Recalling that $\phi(1)\geq \dots\geq \phi(n)$ by property (ii) in Definition \ref{defi-tame}, we obtain
\[\lambda(1)\geq \dots\geq \lambda(s-1).\]
The claim of Lemma \ref{lemma-inequality} is equivalent to $\lambda(1)\geq 0$. Let us assume for contradiction that the claim is false, that is $\lambda(1)< 0$. Then $0> \lambda(1)\geq \dots\geq \lambda(s-1)$.
Let us consider the term
\begin{equation}\label{eq-term1}
r\phi(s)+\sum_{\l=0}^{s-1}\left(\frac{n}{k}-\l \right)\lambda(\l).
\end{equation}
We now plug in the definition of $\lambda(\l)$ for $\l=0,\dots,s-1$. For every $1\leq \l\leq s-1$ and every $i$ with $n-\l(k-1)+1\leq i\leq n-(\l-1)(k-1)$, the coefficient of $\phi(i)$ in $\lambda(\l)$ is $-(n-(\l-1)(k-1)-i)$ and its coefficient in $\lambda(\l-1)$ is $-(i-n+\l(k-1))$. Hence the total coefficient of $\phi(i)$ in (\ref{eq-term1}) is
\begin{multline*}
-\left(\frac{n}{k}-\l \right)(n-(\l-1)(k-1)-i)-\left(\frac{n}{k}-\l +1\right)(i-n+\l(k-1))\\
=-\left(\frac{n}{k}-\l \right)(k-1)-(i-n+\l(k-1))
=-\frac{n}{k}(k-1)+n-i=\frac{n}{k}-i=-\left(i-\frac{n}{k}\right).
\end{multline*}
Furthermore for all  $i$ with $n-s(k-1)+1\leq i\leq n-(s-1)(k-1)$, the coefficient of $\phi(i)$ in $\lambda(s-1)$ is $-(i-n+s(k-1))$, and so the total coefficient of $\phi(i)$ in (\ref{eq-term1}) is
\[-\left(\frac{n}{k}-s+1\right)(i-n+s(k-1))=-(r+1)(i-n+s(k-1)). \]
Hence, when plugging in the definition of $\lambda(\l)$ for $\l=0,\dots,s-1$ into the term (\ref{eq-term1}),  we obtain
\begin{multline*}
r\phi(s)+\sum_{\l=0}^{s-1}\left(\frac{n}{k}-\l \right)\lambda(\l)=\left(\frac{n}{k}-s \right)\phi(s)+\sum_{\l=0}^{s-1}\left(\frac{n}{k}-\l \right)\lambda(\l)\\
=\sum_{\l=0}^{s}\left(\frac{n}{k}-\l \right)\phi(\l)-\sum_{i=n-s(k-1)+1}^{n-(s-1)(k-1)}(r+1)(i-n+s(k-1))\phi(i)-\sum_{i=n-(s-1)(k-1)+1}^{n}\left(i-\frac{n}{k}\right)\phi(i)
\end{multline*}
Since $\phi$ has mean $n/k$ by property (i) in Definition \ref{defi-tame}, we have $\sum_{i=0}^{n} i\phi(i)=\frac{n}{k} \sum_{i=0}^{n}\phi(i)$ and therefore
\[\sum_{\l=0}^{s}\left(\frac{n}{k}-\l\right)\phi(\l)=\sum_{i=s+1}^{n}\left(i-\frac{n}{k}\right)\phi(i).\]
Thus, recalling $n-s(k-1)+1\geq s+1$, we obtain
\begin{align*}
r\phi(s)&+\sum_{\l=0}^{s-1}\left(\frac{n}{k}-\l \right)\lambda(\l)\\
&=\sum_{i=s+1}^{n}\left(i-\frac{n}{k}\right)\phi(i)-\sum_{i=n-s(k-1)+1}^{n-(s-1)(k-1)}(r+1)(i-n+s(k-1))\phi(i)-\sum_{i=n-(s-1)(k-1)+1}^{n}\left(i-\frac{n}{k}\right)\phi(i)\\
&=\sum_{i=s+1}^{n-s(k-1)}\left(i-\frac{n}{k}\right)\phi(i)+\sum_{i=n-s(k-1)+1}^{n-(s-1)(k-1)}\left(i-\frac{n}{k}-(r+1)(i-n+s(k-1))\right)\phi(i).
\end{align*}
Note that
\begin{multline*}
i-\frac{n}{k}-(r+1)(i-n+s(k-1)) =i-\frac{n}{k}-i+n-s(k-1)-r(i-n+s(k-1))=\frac{n}{k}(k-1)-s(k-1)-r(i-n+s(k-1))\\
=r(k-1)-r(i-n+s(k-1))=r(n-(s-1)(k-1)-i).
\end{multline*}
Hence
\[r\phi(s)+\sum_{\l=0}^{s-1}\left(\frac{n}{k}-\l \right)\lambda(\l)
=\sum_{i=s+1}^{n-s(k-1)}\left(i-\frac{n}{k}\right)\phi(i)+\sum_{i=n-s(k-1)+1}^{n-(s-1)(k-1)}r(n-(s-1)(k-1)-i)\phi(i).\]
Recall that $\lambda(0)=0$ and $0>\lambda(1)\geq \dots\geq  \lambda(s-1)$. Noting that the coefficient of each $\lambda(\l)$ on the left-hand side of the last equation is strictly positive, this implies (recalling $s\geq 2$)
\begin{equation}\label{eq-term2}
r\phi(s)>\sum_{i=s+1}^{n-s(k-1)}\left(i-\frac{n}{k}\right)\phi(i)+\sum_{i=n-s(k-1)+1}^{n-(s-1)(k-1)}r(n-(s-1)(k-1)-i)\phi(i).
\end{equation}
As $s+1>\frac{n}{k}$, all terms on the right-hand side are non-negative. Hence the left-hand side must be positive. In particular, we must have $r>0$.
So $s<\frac{n}{k}$ and therefore $n-s(k-1)\geq s+1$. Furthermore, we obtain $\lceil n/k\rceil = \lfloor n/k \rfloor +1=s+1$. Thus, property (iv) in Definition \ref{defi-tame} gives $2\phi(s)\leq \phi(s-1)+\phi(s+1)$. Hence
\[r\phi(s)=r^{2}\phi(s)+\frac{1}{2}r(1-r)\cdot 2\phi(s)\leq r^{2}\phi(s)+\frac{1}{2}r(1-r)\cdot(\phi(s-1)+\phi(s+1)).\]
Together with (\ref{eq-term2}), this gives
\[r^{2}\phi(s)+\frac{1}{2}r(1-r)\phi(s-1)+\frac{1}{2}r(1-r)\phi(s+1)>\sum_{i=s+1}^{n-s(k-1)}\left(i-\frac{n}{k}\right)\phi(i)+\sum_{i=n-s(k-1)+1}^{n-(s-1)(k-1)}r(n-(s-1)(k-1)-i)\phi(i).\]
Note that the coefficient of $\phi(s+1)$ on the right-hand side is $s+1-\frac{n}{k}=1-r$. So subtracting $\frac{1}{2}r(1-r)\phi(s+1)$ from both sides gives
\begin{multline*}
r^{2}\phi(s)+\frac{1}{2}r(1-r)\phi(s-1)
>(1-r)\left(1-\frac{1}{2}r\right)\phi(s+1)+\sum_{i=s+2}^{n-s(k-1)}\left(i-\frac{n}{k}\right)\phi(i)\\
+\sum_{j=1}^{k-1}r(k-1-j)\phi(n-s(k-1)+j).
\end{multline*}
Using again $\phi(1)\geq  \dots\geq\phi(n)$ and $s\geq 2$ as well as $n-s(k-1)\geq s+1$, this yields
\begin{multline}\label{eq-ineq-term3}
r^{2}\phi(1)+\frac{1}{2}(r-r^{2})\phi(1)
>\frac{1}{2}(2-3r+r^{2})\phi(n-2(k-1))+\sum_{i=s+2}^{n-s(k-1)}\left(i-\frac{n}{k}\right)\phi(n-2(k-1))\\
+\sum_{j=1}^{k-2}r(k-1-j)\phi(n-2(k-1)+j).
\end{multline}
Note that
\[\sum_{i=s+2}^{n-s(k-1)}\left(i-\frac{n}{k}\right)=\sum_{i=s+2}^{n-s(k-1)}\left(i-s-1+(1-r)\right)=(n-s(k-1)-s-1)(1-r)+\sum_{j=1}^{n-s(k-1)-s-1}j.\]
Using $n-s(k-1)-s-1=n-sk-1=kr-1$, this yields
\[\sum_{i=s+2}^{n-s(k-1)}\left(i-\frac{n}{k}\right)=(kr-1)(1-r)+\frac{(kr-1)kr}{2}=(k+1)r-kr^2-1+\frac{k^2r^2-kr}{2}=\frac{1}{2}(-2+(k+2)r+(k^2-2k)r^2).\]
Plugging this into (\ref{eq-ineq-term3}) and simplifying, we obtain
\[\frac{r}{2}(1+r)\phi(1)
>\frac{1}{2}((k-1)r+(k-1)^2r^2)\phi(n-2(k-1))+\sum_{j=1}^{k-2}r(k-1-j)\phi(n-2(k-1)+j).\]
As $\phi(n-2(k-1)+1)\geq \dots\geq \phi(n-2(k-1)+(k-2))$ and $r(k-2)\geq \dots\geq r\cdot 1$, we have by Chebyshev's sum inequality (or alternatively by the rearrangement inequality)
\[\sum_{j=1}^{k-2}r(k-1-j)\phi(n-2(k-1)+j)\geq \sum_{j=1}^{k-2}\frac{r(k-2)+ \dots + r\cdot 1}{k-2} \phi(n-2(k-1)+j)=\sum_{j=1}^{k-2}\frac{r(k-1)}{2}\phi(n-2(k-1)+j).\]
Thus, using $\phi(1)\geq \dots\geq \phi(n)$ again,
\begin{multline*}
\frac{r}{2}(1+r)\phi(1)>\frac{r}{2}((k-1)+(k-1)^2r)\phi(n-2(k-1))+\sum_{j=1}^{k-2}\frac{r}{2}(k-1)\phi(n-2(k-1)+j)\\
\geq \frac{r}{2}(1+r)(k-1)\phi(n-(k-1))+\frac{r^2}{2}(k-1)(k-2)\phi(n-2(k-1))+\sum_{j=1}^{k-2}\frac{r}{2}(k-1)\phi(n-2(k-1)+j)\\
\geq \frac{r}{2}(1+r)(k-1)\phi(n-(k-1))+\sum_{j=1}^{k-2}\left(\frac{r}{2}+\frac{r^2}{2}\right)(k-1)\phi(n-2(k-1)+j).
\end{multline*}
Dividing by $\frac{r}{2}(1+r)=\frac{r}{2}+\frac{r^2}{2}$ yields
\begin{multline*}
\phi(1)>(k-1)\phi(n-(k-1))+\sum_{j=1}^{k-2}(k-1)\phi(n-2(k-1)+j)\\
\geq (k-1)\phi(n-(k-1))+\sum_{j=1}^{k-2}[j\cdot \phi(n-2(k-1)+j)+(k-1-j)\cdot \phi(n-(k-1)+j)].
\end{multline*}
In other words,
\begin{multline*}
\phi(1)>\phi(n-1)+2\phi(n-2)+\dots+(k-2)\phi(n-k+2)\\
+(k-1)\phi(n-k+1)+(k-2)\phi(n-k)+\dots+\phi(n-2k+3),
\end{multline*}
so the claim of Lemma \ref{lemma-inequality} is true. This is a contradiction to our assumption (recall that we assumed that the lemma is false), which finishes the proof of the lemma.

\section{Upper Bound}
\label{sect-upperbound}

Here, we give a proof of Theorem \ref{thmupperbound}. The proof is very similar to that of Theorem 4 in \cite{NASLUND18}, which in turn was inspired by the proof of Theorem 4.14 in \cite{BCCGNSU16}. We repeat these arguments here for the reader's convenience.

Let $m=p^\l$ for a prime number $p$ and an integer $\l\geq 1$. For every integer $0\leq a\leq m-1$, we have
\[\binom{z}{a}\equiv \binom{z'}{a}\pmod{p}\]
if $z$ and $z'$ are non-negative integers with $z\equiv z'\pmod{m}$. This can be derived from Lucas' theorem. Hence there is a well-defined map $\mathbb{Z}_m\to \Fp$ given by $z\mapsto \binom{z}{a}$.

\begin{lemma}[Lemma 9 in \cite{NASLUND18}]\label{lemma-naslund} Let $p$ be a prime and $m=p^\l$ be a prime power, and let $z_1,\dots,z_k\in \mathbb{Z}_m$. Then over $\Fp$ we have the identity
	\[\sum_{\substack{a_1,\dots,a_k \in \lbrace 0,\dots, m-1\rbrace\\a_1+\dots+a_k\leq m-1}}(-1)^{a_1+\dots+a_k}\binom{z_1}{a_1}\dots \binom{z_k}{a_k}=\begin{cases}
	1&\text{if }z_1+\dots+z_k=0\text{ in }\mathbb{Z}_m\\
	0&\text{otherwise.}
	\end{cases}\]
\end{lemma}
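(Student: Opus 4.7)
The plan is to lift each $z_i \in \mathbb{Z}_m$ to its representative $\tilde{z}_i \in \{0,\dots,m-1\}$, apply the Vandermonde convolution to collapse the multi-index sum into a single alternating binomial sum, rewrite that via a telescoping identity, and then read off the answer modulo $p$ using Lucas' theorem. Since the map $\mathbb{Z}_m \to \Fp$ sending $z \mapsto \binom{z}{a}$ is well-defined (as mentioned just before the lemma), we have $\binom{z_i}{a_i} = \binom{\tilde{z}_i}{a_i} \pmod p$, so the sum in the lemma is unchanged if we replace each $z_i$ by $\tilde{z}_i$ and work with integer binomials reduced mod $p$.

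Next, grouping the sum by $s = a_1+\dots+a_k$ and observing that as integers $\binom{\tilde{z}_i}{a_i}=0$ whenever $a_i > \tilde{z}_i$, so the constraint $a_i \leq m-1$ is automatic, the Vandermonde convolution gives
\[\sum_{a_1+\dots+a_k = s,\, a_i \ge 0} \binom{\tilde{z}_1}{a_1}\cdots\binom{\tilde{z}_k}{a_k} = \binom{Z}{s}, \qquad Z := \tilde{z}_1+\dots+\tilde{z}_k.\]
Hence the left-hand side of the lemma reduces to $\sum_{s=0}^{m-1}(-1)^s\binom{Z}{s} \pmod p$. Iterating Pascal's rule (or direct telescoping) yields the standard identity $\sum_{s=0}^{N}(-1)^s\binom{Z}{s} = (-1)^N \binom{Z-1}{N}$, valid for all $Z \geq 0$ when one interprets $\binom{-1}{N} := (-1)^N$ in the $Z=0$ case. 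With $N = m-1$, the sum becomes $(-1)^{m-1}\binom{Z-1}{m-1}$.

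The main technical step is evaluating $\binom{Z-1}{m-1}$ modulo $p$ via Lucas' theorem; this is where one needs to use that $m = p^\l$ rather than an arbitrary modulus. The base-$p$ expansion of $m - 1 = p^\l - 1$ is $(p-1,\dots,p-1,0,0,\dots)$ with $\l$ leading digits equal to $p-1$. By Lucas, $\binom{Z-1}{m-1}$ is nonzero mod $p$ only if the bottom $\l$ base-$p$ digits of $Z-1$ are all $p-1$, i.e.\ $Z - 1 \equiv m-1 \pmod m$, equivalently $Z \equiv 0 \pmod m$. When this holds, each of those factors is $\binom{p-1}{p-1}=1$ and the higher digits contribute $\binom{d_i}{0}=1$, so $\binom{Z-1}{m-1}\equiv 1 \pmod p$. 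Since $0 \leq Z \leq k(m-1) < km$, the condition $Z \equiv 0 \pmod m$ is exactly $z_1 + \dots + z_k = 0$ in $\mathbb{Z}_m$. The case $Z=0$ needs separate verification (the original sum collapses to $\binom{0}{0}^k = 1$, matching the convention above).

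Finally, the right-hand side is $(-1)^{m-1}$ when $z_1+\dots+z_k=0$ in $\mathbb{Z}_m$ and $0$ otherwise. Because $m = p^\l$ is either a power of $2$ (in which case $-1 = 1$ in $\Fp$) or odd (in which case $m-1$ is even), we have $(-1)^{m-1} = 1$ in $\Fp$, giving the lemma. The hard part is the Lucas bookkeeping and the $Z=0$ degeneracy; the rest is routine combinatorial manipulation that one need only verify is consistent with the integer-to-$\Fp$ reduction.
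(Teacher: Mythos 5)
Your proof is correct, but it takes a genuinely different and somewhat longer path than the paper's. Both proofs start by collapsing the $k$-fold sum with the Vandermonde convolution, but then they diverge. The paper works within $\mathbb{Z}_m$ throughout: it reduces $z_1+\dots+z_k$ modulo $m$ \emph{before} attacking the resulting alternating binomial sum, using the well-definedness of the map $\mathbb{Z}_m\to\Fp$, $z\mapsto\binom{z}{a}$. This means the relevant integer is already in $\{0,\dots,m-1\}$, so the identity $\sum_{a=0}^{m-1}(-1)^a\binom{z}{a}=(1-1)^z$ applies directly and the proof is immediate -- no telescoping, no separate appeal to Lucas, no sign bookkeeping. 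You instead lift each $z_i$ to an integer representative and form $Z=\tilde z_1+\dots+\tilde z_k$, which may be as large as $k(m-1)$, so the binomial theorem no longer truncates nicely at $s=m-1$. To handle this you correctly invoke the telescoping identity $\sum_{s=0}^{N}(-1)^s\binom{Z}{s}=(-1)^N\binom{Z-1}{N}$, evaluate $\binom{Z-1}{m-1}$ by Lucas, treat the $Z=0$ edge case, and check that $(-1)^{m-1}=1$ in $\Fp$. All of these steps are right, and your argument does illuminate concretely where the prime-power hypothesis enters (the digit pattern of $m-1=p^\ell-1$), whereas the paper hides that inside the well-definedness of $z\mapsto\binom{z}{a}$ stated just before the lemma. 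But the extra machinery -- the convention $\binom{-1}{N}=(-1)^N$, the parity-of-$m$ case split -- is avoidable: applying the reduction modulo $m$ to the full sum $z_1+\dots+z_k$ (not just to each $z_i$) is the small observation that makes the paper's version essentially a one-liner.
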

\begin{proof}Note that for every integer $0\leq z\leq m-1$ we have
	\[\sum_{0\leq a\leq m-1}(-1)^a\binom{z}{a}=(1-1)^z=\begin{cases}
	1&\text{if }z=0\\
	0&\text{if }1\leq z\leq m-1.
	\end{cases}\]
	Hence for every $z\in \mathbb{Z}_m$ we have
	\[\sum_{0\leq a\leq m-1}(-1)^a\binom{z}{a}=\begin{cases}
	1&\text{if }z=0\text{ in }\mathbb{Z}_m\\
	0&\text{if }z\neq 0\text{ in }\mathbb{Z}_m
	\end{cases}\]
	over $\Fp$. Furthermore note that for $0\leq a\leq m-1$ and for non-negative integers $z_1,\dots,z_k$ we have
	\[\binom{z_1+\dots+z_k}{a}=\sum_{\substack{a_1,\dots,a_k \in \lbrace 0,\dots, m-1\rbrace\\a_1+\dots+a_k=a}}\binom{z_1}{a_1}\dots \binom{z_k}{a_k},\]
	hence the same identity holds over $\Fp$ for $z_1,\dots,z_k\in \mathbb{Z}_m$. Now, for all $z_1,\dots,z_k\in \mathbb{Z}_m$ we obtain
	\begin{multline*}
	\sum_{\substack{a_1,\dots,a_k \in \lbrace 0,\dots, m-1\rbrace\\a_1+\dots+a_k\leq m-1}}(-1)^{a_1+\dots+a_k}\binom{z_1}{a_1}\dots \binom{z_k}{a_k}=\sum_{0\leq a\leq m-1}(-1)^a\binom{z_1+\dots+z_k}{a}\\
	=\begin{cases}
	1&\text{if }z_1+\dots+z_k=0\text{ in }\mathbb{Z}_m\\
	0&\text{otherwise}
	\end{cases}
	\end{multline*}
	over $\Fp$.\end{proof}

Let us now prove Theorem \ref{thmupperbound} using Tao's slice rank method \cite{Taoblog16}.

\begin{proof}[Proof of Theorem \ref{thmupperbound}] Let $(x_{1,j}, x_{2,j}, \dots, x_{k,j})_{j=1}^L$ be a $k$-colored sum-free set in $\Zmn$. We need to prove that $L\leq (\Gamma_{m,k})^n$. We will first prove that $L\leq k\cdot (\Gamma_{m,k})^n$, and the additional factor $k$ will then be removed using a power trick.
	
	Let us define a tensor $G:\lbrace 1,\dots,L\rbrace^k\to \Fp$ by setting
	\begin{equation}\label{eq-def-tensor}
	G(j_1,\dots, j_k)=\prod_{i=1}^{n}\left(\sum_{\substack{a_1,\dots,a_k \in \lbrace 0,\dots, m-1\rbrace\\a_1+\dots+a_k\leq m-1}}(-1)^{a_1+\dots+a_k}\binom{x_{1,j_1}^{(i)}}{a_1}\dots \binom{x_{k,j_k}^{(i)}}{a_k}\right)
	\end{equation}
	for all $j_1,\dots,j_k\in \lbrace 1,\dots,L\rbrace$. It follows from Lemma \ref{lemma-naslund} that for each $i=1,\dots,n$ the sum on the right-hand side is $1$ if and only if $x_{1,j_1}^{(i)}+\dots+x_{k,j_k}^{(i)}=0$ and zero otherwise. Thus, we obtain
	\[G(j_1,\dots, j_k)=\begin{cases}
	1&\text{if }x_{1,j_1}+\dots+x_{k,j_k}=0\\
	0&\text{otherwise}
	\end{cases}\]
	for all $j_1,\dots,j_k\in \lbrace 1,\dots,L\rbrace$. Hence $G$ is a diagonal tensor and by Tao's slice rank Lemma \cite[Lemma 1]{Taoblog16} the tensor $G$ has slice rank $L$.
	
	On the other hand, by multiplying (\ref{eq-def-tensor}) out, we can write $G(j_1,\dots, j_k)$ as a linear combination of terms of the form
	\[\left(\binom{x_{1,j_1}^{(1)}}{a_{1,1}}\binom{x_{1,j_1}^{(2)}}{a_{1,2}}\dots \binom{x_{1,j_1}^{(n)}}{a_{1,n}}\right)\dots \left(\binom{x_{k,j_k}^{(1)}}{a_{k,1}}\binom{x_{k,j_k}^{(2)}}{a_{k,2}}\dots \binom{x_{k,j_k}^{(n)}}{a_{k,n}}\right)\]
	with $a_{1,i}+\dots+a_{k,i}\leq m-1$ for each $i=1,\dots,n$. Thus, $\sum_{s=1}^{k} (a_{s,1}+\dots+a_{s,n})\leq n(m-1)$, so for each of these terms in the linear combination we can choose some $s\in \lbrace 1,\dots,k \rbrace$ with $a_{s,1}+\dots+a_{s,n}\leq n(m-1)/k$. Let us now sort the terms into groups depending on the chosen index $s\in \lbrace 1,\dots,k \rbrace$ and on the $n$-tuple $(a_{s,1},\dots,a_{s,n})$. Then each group gives a term of the form 
	\[\left(\binom{x_{s,j_s}^{(1)}}{a_{s,1}}\binom{x_{s,j_s}^{(2)}}{a_{s,2}}\dots \binom{x_{s,j_s}^{(n)}}{a_{s,n}}\right)\cdot G'(j_1,\dots,j_{s-1},j_{s+1},\dots,j_n)\]
	for some function $G'$ depending only on $j_1,\dots,j_{s-1},j_{s+1},\dots,j_n$ and not on $j_s$. In other words, we obtain a slice rank decomposition of $G$ with one slice for each group given by $s\in \lbrace 1,\dots,k \rbrace$ and an $n$-tuple $(a_{s,1},\dots,a_{s,n})$. Since $G$ has slice rank $L$, the number of groups must be at least $L$, so
	\[L\leq k\cdot\vert\lbrace (a_1,\dots,a_n) \in \lbrace 0,\dots, m-1\rbrace^n\mid a_1+\dots+a_n\leq n(m-1)/k\rbrace\vert.\]
	
	The following lemma gives an upper bound for the quantity on the right-hand side.
	\begin{lemma}\label{lemma-count-tuples-upperbound}
		$\vert\lbrace (a_1,\dots,a_n) \in \lbrace 0,\dots, m-1\rbrace^n\mid a_1+\dots+a_n\leq n(m-1)/k\rbrace\vert\leq (\Gamma_{m,k})^n$.
	\end{lemma}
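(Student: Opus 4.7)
The plan is to use a standard generating function / Chernoff-style multiplicative weights argument, parametrised by the value $\gamma = \gamma_{m,k} \in (0,1)$ that realises the minimum in the definition of $\Gamma_{m,k}$. In fact, it is more natural to argue for an arbitrary $0<\gamma<1$ and then optimise at the very end.

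Fix any $0<\gamma<1$. For every tuple $(a_1,\dots,a_n) \in \{0,\dots,m-1\}^n$ with $a_1+\dots+a_n \leq n(m-1)/k$, the inequality $\gamma^{a_1+\dots+a_n} \geq \gamma^{n(m-1)/k}$ holds, because $\gamma<1$ and the exponent is being decreased. Summing over all such tuples gives
\[
\gamma^{n(m-1)/k}\cdot \bigl|\{(a_1,\dots,a_n)\in\{0,\dots,m-1\}^n : a_1+\dots+a_n \leq n(m-1)/k\}\bigr|
\;\leq\; \sum_{(a_1,\dots,a_n)} \gamma^{a_1+\dots+a_n},
\]
where the right-hand sum is bounded by the sum over \emph{all} tuples in $\{0,\dots,m-1\}^n$ (adding non-negative terms can only increase the sum). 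That full sum factorises as
\[
\sum_{(a_1,\dots,a_n)\in \{0,\dots,m-1\}^n} \gamma^{a_1+\dots+a_n} \;=\; \prod_{i=1}^{n}\sum_{a=0}^{m-1}\gamma^{a} \;=\; (1+\gamma+\dots+\gamma^{m-1})^n.
\]

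Rearranging yields
\[
\bigl|\{(a_1,\dots,a_n) : a_1+\dots+a_n \leq n(m-1)/k\}\bigr| \;\leq\; \left(\frac{1+\gamma+\dots+\gamma^{m-1}}{\gamma^{(m-1)/k}}\right)^{\!n}.
\]
Since this holds for every $0<\gamma<1$, taking the infimum of the right-hand side over $\gamma\in(0,1)$ (attained at $\gamma=\gamma_{m,k}$) produces the bound $(\Gamma_{m,k})^n$, as required. There is no real obstacle here; the only thing worth noting is that one must use $\gamma<1$ (so that the exponent inequality goes the right way), which is exactly why the minimum in the definition of $\Gamma_{m,k}$ is taken over $(0,1)$.
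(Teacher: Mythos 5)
Your proof is correct and is essentially the same argument as the paper's: the paper casts it probabilistically, introducing i.i.d.\ uniform random variables $Z_1,\dots,Z_n$ on $\{0,\dots,m-1\}$ and applying Markov's inequality to $\gamma^{Z_1+\dots+Z_n}$, whereas you phrase the identical weighting $\gamma^{a_1+\dots+a_n}$ directly as a counting bound. Both reduce to the same factorisation $(1+\gamma+\dots+\gamma^{m-1})^n$ and the same optimisation over $\gamma\in(0,1)$.
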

	We postpone the proof of this lemma for a moment, in order to first finish the proof of \ref{thmupperbound}.
	Applying Lemma \ref{lemma-count-tuples-upperbound}, we obtain $L\leq k\cdot (\Gamma_{m,k})^n$ for every $k$-colored sum-free set $(x_{1,j}, x_{2,j}, \dots, x_{k,j})_{j=1}^L$ in $\Zmn$ (for all $n$). Note that if $(x_{1,j}, x_{2,j}, \dots, x_{k,j})_{j=1}^L$ is a $k$-colored sum-free set in $\Zmn$, then for every integer $\l\geq 1$, we can construct a $k$-colored sum-free set of size $L^\l$ in $\mathbb{Z}_m^{n\l}=\Zmn\times\dots\times\Zmn$ by taking the collection of $k$-tuples 
	\[\big((x_{1,j_1}, x_{1,j_2},\dots,x_{1,j_\l}), \dots, (x_{k,j_1}, x_{k,j_2},\dots,x_{k,j_\l})\big)_{(j_1,\dots,j_\l)\in \lbrace 1,\dots,L\rbrace^\l}.\]
	Thus, $L^\l\leq k\cdot  (\Gamma_{m,k})^{n\l}$, and we can conclude $L\leq (\Gamma_{m,k})^n$ by taking $\l\to\infty$.
\end{proof}

Lemma  \ref{lemma-count-tuples-upperbound} has a standard proof, it was given for example in \cite[Proposition 4.12]{BCCGNSU16}, see also \cite[Lemma 5]{NASLUND18}. For the reader's convenience, we repeat the proof here:

\begin{proof}[Proof of Lemma \ref{lemma-count-tuples-upperbound}]
	Let $Z_1,\dots,Z_n$ be independent random variables, uniformly distributed on $\lbrace 0,\dots, m-1\rbrace$. Then the desired number of $n$-tuples $(a_1,\dots,a_n) \in \lbrace 0,\dots, m-1\rbrace^n$ with $a_1+\dots+a_n\leq n(m-1)/k$ equals $m^n\P(Z_1+\dots+Z_n\leq n(m-1)/k)$. So we need to prove $\P\big(Z_1+\dots+Z_n\leq n(m-1)/k\big)\leq m^{-n}(\Gamma_{m,k})^n$.
	
	For every $0<\gamma<1$ we have by Markov's inequality
	\begin{multline*}
	\P\big(Z_1+\dots+Z_n\leq n(m-1)/k\big)=\P\big(\gamma^{Z_1+\dots+Z_n}\geq \gamma^{n(m-1)/k}\big)\leq \gamma^{-n(m-1)/k}\E\big(\gamma^{Z_1+\dots+Z_n}\big)\\
	=\gamma^{-n(m-1)/k}\left(\E\big(\gamma^{Z_1}\big)\right)^n=\gamma^{-n(m-1)/k}\left(\frac{1+\gamma+\dots+\gamma^{m-1}}{m}\right)^n=m^{-n}\left(\frac{1+\gamma+\dots+\gamma^{m-1}}{\gamma^{(m-1)/k}}\right)^n.
	\end{multline*}
	Taking $\gamma=\gamma_{m,k}$, this gives $\P\big(Z_1+\dots+Z_n\leq n(m-1)/k\big)\leq m^{-n}(\Gamma_{m,k})^n$, as desired.
\end{proof}

\textit{Acknowledgements.} We would like to thank Jacob Fox and Terence Tao for helpful discussions.

\bibliography{ref}
\bibliographystyle{amsplain_mod2}

\end{document}